\numberwithin{equation}{section}
\numberwithin{figure}{section}
\theoremstyle{plain}
\newtheorem{thm}{\protect\theoremname}
\theoremstyle{plain}
\newtheorem{prop}[thm]{\protect\propositionname}
\theoremstyle{plain}
\newtheorem{lem}[thm]{\protect\lemmaname}
\theoremstyle{remark}
\newtheorem{rem}[thm]{\protect\remarkname}
\theoremstyle{definition}
\newtheorem{defn}[thm]{\protect\definitionname}
\theoremstyle{definition}
\newtheorem{example}[thm]{\protect\examplename}
\theoremstyle{plain}
\newtheorem{cor}[thm]{\protect\corollaryname}
\newcommand{\xyR}[1]{\xydef@\xymatrixrowsep@{#1}}
\newcommand{\xyC}[1]{\xydef@\xymatrixcolsep@{#1}}
\newcommand{\Gal}{\mathrm{Gal}}
\newcommand{\Aut}{\mathrm{Aut}}
\newcommand{\Hom}{\mathrm{Hom}}
\newcommand{\Spec}{\mathrm{Spec}}
\newcommand{\Sp}{\mathrm{Sp}}
\newcommand{\AlgSp}{\mathsf{AlgSp}}
\newcommand{\Fet}{\mathsf{Fet}}
\newcommand{\Sch}{\mathsf{Sch}}
\newcommand{\Shv}{\mathsf{Shv}}
\newcommand{\PreShv}{\mathsf{PreShv}}
\newcommand{\Set}{\mathsf{Set}}
\newcommand{\loc}{\mathsf{loc}}
\newcommand{\Top}{\mathsf{Top}}
\newcommand{\res}{\mathrm{res}}
\newcommand{\Supp}{\mathrm{Supp}}
\providecommand{\corollaryname}{Corollary}
\providecommand{\definitionname}{Definition}
\providecommand{\examplename}{Example}
\providecommand{\lemmaname}{Lemma}
\providecommand{\propositionname}{Proposition}
\providecommand{\remarkname}{Remark}
\providecommand{\theoremname}{Theorem}
\begin{document}
\title{Morphing etale spaces}
\author{Christophe Cornut}
\address{Sorbonne Université, Université Paris Cité, CNRS, IMJ-PRG, F-75005
Paris, France.}
\subjclass[2000]{14A20, 14A25.}
\begin{abstract}
We give a simple description of the category of sheaves on the small
etale site of an irreducible scheme whose local rings are geometrically
unibranch and henselian, which affords a characterization of representable
sheaves. 
\end{abstract}

\maketitle

\section{Introduction}

Let $S$ be a scheme, $\iota:Z\hookrightarrow S$ a closed immersion
with complementary open immersion $j:U\hookrightarrow S$. By Grothendieck's
gluing theorem \cite[IV, 9.5]{EGA4.1}, the category $\Shv(S_{et})$
of sheaves on the small etale site of $S$ is equivalent to the category
of triples $(B_{Z},B_{U},\ell_{B})$ where $B_{Z}\in\Shv(Z_{et})$,
$B_{U}\in\Shv(U_{et})$, and $\ell_{B}:B_{Z}\rightarrow\iota^{\ast}j_{\ast}B_{U}$
is a morphism in $\Shv(Z_{et})$. The equivalence takes $B\in\Shv(S_{et})$
to $B_{Z}=\iota^{\ast}B$, $B_{U}=j^{\ast}B$, and $\ell_{B}=\iota^{\ast}(u_{B})$
where $u_{B}:B\rightarrow j_{\ast}j^{\ast}B$ is the unit of the adjunction
\[
j^{\ast}:\Shv(S_{et})\rightarrow\Shv(U_{et}):j_{\ast}.
\]
This gets particularly simple when $Z$ and $U$ are punctual schemes,
corresponding to the closed and generic points $s$ and $\eta$ of
a $1$-dimensional irreducible local scheme $S$: by the topological
invariance of etale sheaves, the first two components of our triples
may then be viewed as etale sheaves on the corresponding residue fields,
i.e.~as sets equipped with a smooth action of the corresponding absolute
Galois groups. The description of the connecting morphism $\ell_{B}$
is however somewhat trickier. 

When $S$ is the spectrum of a henselian discrete valuation ring $\mathcal{O}$,
we arrive at the following picture. Let $K$ be the fraction field
of $\mathcal{O}$, $K^{sep}$ a separable closure of $K$, $K^{nr}$
the maximal unramified extension of $K$ in $K^{sep}$, $\mathcal{O}^{nr}$
the integral closure of $\mathcal{O}$ in $K^{nr}$, $G=\Gal(K^{sep}/K)$
the Galois group and $I=\Gal(K^{sep}/K^{nr})$ the inertia subgroup,
so that $G/I=\Gal(K^{nr}/K)\simeq\Gal(k^{sep}/k)$ where $k^{sep}$
is the residue field of $\mathcal{O}^{nr}$, a separable closure of
the residue field $k$ of $\mathcal{O}$. Then $\Shv(S_{et})$ is
equivalent to the category of morphisms of smooth $G$-sets $\ell_{B}:B_{\overline{s}}\rightarrow B_{\overline{\eta}}$
where $I$ acts trivially on $B_{\overline{s}}$. A sheaf $B$ is
mapped to the localization morphism between its stalks at the geometric
points $\overline{s}$ and $\overline{\eta}$ of $S=\Spec(\mathcal{O})$
which are respectively determined by 
\[
\xyR{1pt}\xymatrix{ & \mathcal{O}^{nr}\ar@{->>}[rd] &  &  &  & \mathcal{O}^{nr}\ar@{^{(}->}[rd]\\
\mathcal{O}\ar@{^{(}->}[ru]\ar@{->>}[rd] &  & k^{sep} & \text{and} & \mathcal{O}\ar@{^{(}->}[ru]\ar@{_{(}->}[rd] &  & K^{nr}\ar@{^{(}->}[r] & K^{sep}.\\
 & k\ar@{_{(}->}[ru] &  &  &  & K\ar@{_{(}->}[ru]
}
\]
When $B=\Hom_{S}(-,X)$ for some $X\in S_{et}$, we obtain the morphism
\[
\xyR{2pc}\xymatrix{X(k^{sep})\ar[r]^{\ell_{B}} & X(K^{sep})\\
X(\mathcal{O}^{nr})\ar[u]^{\simeq}\ar[r] & X(K^{nr})\ar@{_{(}->}[u]
}
\]
When $X$ belongs to the strictly full subcategory $S_{set}$ of separated
etale $S$-schemes, the morphism $\ell_{B}$ is injective by the valuative
criterion of separatedness. In general, covering $X$ by affines,
we find that $\ell_{B}$ is injective on $G$-orbits, i.e.~
\[
\forall x\in X(k^{sep}),\quad\forall g\in G:\qquad\ell_{B}(gx)=\ell_{B}(x)\iff gx=x.
\]
It turns out that the converse implications hold. Namely, a sheaf
$B\in\Shv(S_{et})$ is representable by some $X\in S_{set}$ (resp.
by some $X\in S_{et}$) if and only if $\ell_{B}:B_{\overline{s}}\rightarrow B_{\overline{\eta}}$
is injective (resp.~injective on orbits). In particular, there are
adjunctions 
\[
(-)_{set}:\Shv(S_{et})\longleftrightarrow S_{set}:\mathrm{yon}
\]
\[
(-)_{et}:\Shv(S_{et})\longleftrightarrow S_{et}:\mathrm{yon}
\]
where $\mathrm{yon}$ is the Yoneda embedding, and the left adjoints
$(-)_{set}$ and $(-)_{et}$ correspond to the functors which map
a $G$-morphism $\ell_{B}:B_{\overline{s}}\rightarrow B_{\overline{\eta}}$
to respectively
\[
\xyC{3pc}\xymatrix{(\ell_{B})_{set}:\mathrm{Im}(B_{\overline{s}}\rightarrow B_{\overline{\eta}})\ar@{^{(}->}[r]\sp(0.71){\mathrm{inc}} & B_{\overline{\eta}}}
\]
\[
\xymatrix{(\ell_{B})_{et}:\mathrm{Im}(B_{\overline{s}}\rightarrow B_{\overline{\eta}}\times G\backslash B_{\overline{s}})\ar[r]\sp(0.75){p_{1}} & B_{\overline{\eta}}}
\]

The modest goal of this paper is to explain and generalize this to
the case where $S$ is irreducible with geometrically unibranch henselian
local rings, e.g.~$S=\Spec(\mathcal{O})$ where $\mathcal{O}$ is
an arbitrary henselian valuation ring. Sections $2$ to $6$ investigate
the relations between various notions of etale objects over $S$:
etale algebraic spaces, etale sheaves, fet-sheaves (which are sheaves
on the subsite $S_{fet}$ of $S_{et}$ whose objects are finite etale
over opens of $S$), Zariski sheaves of finite etale sheaves, Zariski
sheaves of $\pi$-sets or $G$-sets. Under good assumptions on $S$,
we arrive at a fairly concrete strictly full subcategory $\Shv_{G}^{\star}(S_{Zar})$
of the category of Zariski sheaves of $G$-sets, which is equivalent
to all categories previously considered. We explore its features in
section $7$, and collect our findings in section $8$. Section~$9$
spells them out when $S=\Spec(\mathcal{O})$ for a henselian valuation
ring $\mathcal{O}$. 

\section{From etale algebraic spaces to etale sheaves}

We fix a big fppf site $(\Sch/S)_{fppf}$ as defined in \cite[\href{https://stacks.math.columbia.edu/tag/021L}{Tag 021L}]{SP}
and let $S_{et}$ be the corresponding small etale site, whose underlying
category is the strictly full subcategory of $X$'s in $(\Sch/S)_{fppf}$
which are etale over $S$, equipped with the induced topology. The
embedding $S_{et}\hookrightarrow(\Sch/S)_{fppf}$ induces a morphism
of sites
\[
\theta:(\Sch/S)_{fppf}\rightarrow S_{et},
\]
whence an adjunction between the corresponding pull-back and push-out
functors on sheaves, both of which $2$-commute with the Yoneda embeddings:
\[
\xymatrix{ & S_{et}\ar@/_{1pc}/[dl]_{\mathrm{yon}}\ar@/^{1pc}/[dr]^{\mathrm{yon}}\\
\qquad\quad\Shv(S_{et})\ar@<2pt>[rr]^{\theta^{\ast}} &  & \Shv((\Sch/S)_{fppf})\ar@<2pt>[ll]^{\theta_{\ast}}
}
\]
Let $\AlgSp(S)$ be the strictly full subcategory of algebraic spaces
in $\Shv((\Sch/S)_{fppf})$, and $\AlgSp_{et}(S)$ the strictly full
subcategory of algebraic spaces etale over $S$. 
\begin{prop}
\label{prop:FromAlgSp2EtShv}The above adjunction restricts to mutually
inverse equivalences
\[
\theta^{\ast}:\Shv(S_{et})\longleftrightarrow\AlgSp_{et}(S):\theta_{\ast}
\]
and induces an adjunction 
\[
\mathrm{inc}:\AlgSp_{et}(S)\longleftrightarrow\Shv((\Sch/S)_{fppf}):\theta^{\ast}\theta_{\ast}.
\]
\end{prop}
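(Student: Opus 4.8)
The plan is to realize the equivalence through explicit \'etale presentations, so that the only non-formal input is a representability statement for the diagonal of an \'etale sheaf. I begin with the two easy halves. The functor $\theta_{\ast}$ is restriction along the inclusion $S_{et}\hookrightarrow(\Sch/S)_{fppf}$; being a right adjoint it preserves all limits, and it carries an algebraic space \'etale over $S$ to a sheaf on $S_{et}$, so $\theta_{\ast}$ does map $\AlgSp_{et}(S)$ into $\Shv(S_{et})$. Dually, $\theta^{\ast}$ is a left adjoint, hence preserves colimits, and by the compatibility with the Yoneda embeddings it sends a representable $h_{X}$ with $X\in S_{et}$ to the big representable $h_{X}$, which is a scheme \'etale over $S$ and so already lies in $\AlgSp_{et}(S)$.

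To see that $\theta^{\ast}$ lands in $\AlgSp_{et}(S)$ on all of $\Shv(S_{et})$, I would present an arbitrary sheaf $F$. Taking $U=\coprod_{(X,s)}X$ over all pairs with $X\in S_{et}$ and $s\in F(X)$ gives a scheme $U\in S_{et}$ together with an epimorphism $h_{U}\twoheadrightarrow F$ in $\Shv(S_{et})$. Its kernel pair $R\hookrightarrow h_{U\times_{S}U}$ is the locus where the two tautological sections of $F$ over $U\times_{S}U$ coincide. The key point is that this equalizer is representable by an open subscheme $V\subseteq U\times_{S}U$: two sections of an \'etale sheaf that agree at a point agree on an \'etale, hence open, neighbourhood, and then agree on that open by the sheaf axiom. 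Thus $R=h_{V}$ with $V\in S_{et}$, the pair $R\rightrightarrows U$ is an \'etale equivalence relation, and its quotient $X_{F}:=U/R$ is an algebraic space \'etale over $S$. Since $\Shv(S_{et})$ is a topos the epimorphism $h_{U}\twoheadrightarrow F$ is effective, so $F=\mathrm{coeq}(h_{V}\rightrightarrows h_{U})$; applying the colimit-preserving functor $\theta^{\ast}$, which fixes representables, yields $\theta^{\ast}F\cong U/R=X_{F}\in\AlgSp_{et}(S)$.

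It remains to check that unit and counit are isomorphisms, which I would do with a single coequalizer computation. For the counit, any $X\in\AlgSp_{et}(S)$ has an \'etale chart $U\to X$ with $U$ and $R:=U\times_{X}U$ both in $S_{et}$; then $h_{U}\to\theta_{\ast}X$ is an epimorphism in $\Shv(S_{et})$ (as $U\to X$ is surjective \'etale and its base change to any object of $S_{et}$ stays in $S_{et}$) whose kernel pair, computed using that $\theta_{\ast}$ preserves fibre products, is $h_{R}$, so $\theta_{\ast}X=\mathrm{coeq}(h_{R}\rightrightarrows h_{U})$. Applying $\theta^{\ast}$ restores the big-site coequalizer $X$, giving $\theta^{\ast}\theta_{\ast}X\cong X$. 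Dually, the presentation $\theta^{\ast}F=X_{F}$ has chart $U\to X_{F}$ with $U\times_{X_{F}}U=V$, so the same computation gives $\theta_{\ast}\theta^{\ast}F=\mathrm{coeq}(h_{V}\rightrightarrows h_{U})=F$. This establishes the mutually inverse equivalences. The second adjunction is then purely formal: transporting $\theta^{\ast}\dashv\theta_{\ast}$ across the equivalence $\theta^{\ast}\colon\Shv(S_{et})\xrightarrow{\sim}\AlgSp_{et}(S)$ identifies $\theta^{\ast}\theta_{\ast}$ as right adjoint to $\mathrm{inc}$, via $\Hom_{(\Sch/S)_{fppf}}(\theta^{\ast}F,G)\cong\Hom_{S_{et}}(F,\theta_{\ast}G)\cong\Hom_{\AlgSp_{et}(S)}(\theta^{\ast}F,\theta^{\ast}\theta_{\ast}G)$.

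I expect the one genuine obstacle to be the representability step in the second paragraph---that the diagonal of an \'etale sheaf is an open immersion, so that the kernel pair $R$ is an honest open subscheme of $U\times_{S}U$ rather than merely an abstract subsheaf. Granting this, together with the standard fact that \'etale equivalence relations admit algebraic-space quotients, every other step is a formal manipulation of the adjunction, the effectivity of epimorphisms in a topos, and the colimit-preservation of $\theta^{\ast}$.
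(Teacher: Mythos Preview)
Your proof is correct and follows essentially the same route as the paper: present an \'etale sheaf as the quotient of a representable by an \'etale equivalence relation, use that monomorphisms in $\Shv(S_{et})$ are open immersions (the paper isolates this as a separate lemma, which you correctly flag as the one genuine obstacle), and then exploit that $\theta^{\ast}$ preserves colimits and representables; the counit is handled symmetrically via an \'etale chart of $A$, and the induced adjunction is formal. The only cosmetic difference is that the paper indexes the covering $U=\coprod X$ over a \emph{set} of generators of $S_{et}$ (standard \'etale affines over affine opens) rather than all pairs $(X,s)$, to ensure $U$ stays in the chosen site.
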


\begin{proof}
It is sufficient to establish that for $A\in\AlgSp_{et}(S)$ and $B\in\Shv(S_{et})$, 
\begin{enumerate}
\item The counit $\theta^{\ast}\theta_{\ast}A\rightarrow A$ is an isomorphism.
\item $\theta^{\ast}B$ belongs to $\AlgSp_{et}(S)$ and the unit $B\rightarrow\theta_{\ast}\theta^{\ast}B$
is an isomorphism.
\end{enumerate}
For $X\in S_{et}$, we denote by $H_{X}\in\Shv((\Sch/S)_{fppf})$
and $h_{X}\in\Shv(S_{et})$ the images of $X$ under the Yoneda embeddings.
Thus $\theta_{\ast}H_{X}=h_{X}$ and $\theta^{\ast}h_{X}\simeq H_{X}$. 

Since $A$ is an etale algebraic space over $S$, there is a $U\in S_{et}$
and a section $a\in A(U)$ such that the corresponding morphism $a:H_{U}\rightarrow A$
is etale surjective, i.e.~relatively representable by etale surjective
morphisms of schemes. In particular, $a:H_{U}\rightarrow A$ is an
epimorphism in $\Shv((\Sch/S)_{et})$. So it is also an epimorphism
in $\Shv((\Sch/S)_{fppf})$, and $\theta_{\ast}a$, which is the morphism
$a:h_{U}\rightarrow\theta_{\ast}A$ corresponding to $a$ in $\theta_{\ast}A(U)=A(U)$,
is an epimorphism in $\Shv(S_{et})$. By general properties of topoi,
these epimorphisms induce isomorphisms $H_{U}/\Sigma\simeq A$ and
$h_{U}/\sigma\simeq\theta_{\ast}A$, where $\Sigma=H_{U}\times_{A}H_{U}$
and $\sigma=h_{U}\times_{\theta_{\ast}A}h_{U}$ are the induced equivalence
relations on $H_{U}$ and $h_{U}$. Since $\theta_{\ast}$ is a right
adjoint, it commutes with all limits, so $\sigma=\theta_{\ast}\Sigma$.
Since $a:H_{U}\rightarrow A$ is etale surjective, so are both projections
$\Sigma\rightarrow H_{U}$. In particular, $\Sigma$ is representable
by a scheme $R$ which is etale over $U$, hence etale over $S$,
i.e.~$\Sigma=H_{R}$ and $\sigma=h_{R}$ with $R\in S_{et}$. Since
$\theta^{\ast}$ is exact, $\theta^{\ast}\theta_{\ast}A$ is the quotient
of $\theta^{\ast}h_{U}\simeq H_{U}$ by $\theta^{\ast}h_{R}\simeq H_{R}$,
i.e.~$\theta^{\ast}\theta_{\ast}A\simeq H_{U}/H_{R}\simeq A$. One
checks that the isomorphism $\theta^{\ast}\theta_{\ast}A\rightarrow A$
thus constructed is the counit of our adjunction, and this proves
$(1)$. 

Fix a set $\mathcal{S}$ of generators of $S_{et}$, for instance
the set of all standard etale affine schemes over affine open subschemes
of $S$. Let $\mathcal{B}$ be the set of pairs $(X,x)$ with $X\in\mathcal{S}$
and $x\in B(X)$. Set $U=\coprod_{(X,x)\in\mathcal{B}}X$, so that
$U\in S_{et}$. Let $b:h_{U}\rightarrow B$ be the morphism of etale
sheaves on $S$ corresponding to the section $b\in B(U)$ whose restriction
to the $(X,x)$-component $X$ of $U$ equals $x\in B(X)$. Then $b:h_{U}\rightarrow B$
is an epimorphism in $\Shv(S_{et})$. By general properties of topoi,
it induces an isomorphism $h_{U}/\sigma\simeq B$ where $\sigma=h_{U}\times_{B}h_{U}$
is the equivalence relation on $h_{U}$ induced by $b$. By lemma~\ref{lem:MonoInShvetAreRepByOpenIm}
below applied to the diagonal of $B$, the etale sheaf $\sigma$ is
representable by an open subscheme $R$ of $U\times_{S}U$, so $R\in S_{et}$
is an etale equivalence relation on $U\in S_{et}$ and $\sigma=h_{R}$.
Since $B\simeq h_{U}/h_{R}$ and $\theta^{\ast}$ is exact, $\theta^{\ast}B\simeq H_{U}/H_{R}$,
which belongs to $\AlgSp_{et}(S)$. As above, $\theta_{\ast}\theta^{\ast}B\simeq h_{U}/h_{R}\simeq B$,
and the isomorphism $B\rightarrow\theta_{\ast}\theta^{\ast}B$ thus
constructed is the unit of our adjunction. This proves $(2)$. 
\end{proof}
\begin{lem}
\label{lem:MonoInShvetAreRepByOpenIm}Any monomorphism of $\Shv(S_{et})$
is representable by open immersions. 
\end{lem}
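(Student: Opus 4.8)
The plan is to reduce to the case of a subsheaf of a representable sheaf and then to produce the open subscheme by hand. Recall that to say a monomorphism $f\colon A\to B$ in $\Shv(S_{et})$ is representable by open immersions means that for every $X\in S_{et}$ and every morphism $\xi\colon h_{X}\to B$ the fibre product $A\times_{B}h_{X}$ is representable by $h_{V}$ for some open immersion $V\hookrightarrow X$. Since the base change of a monomorphism is again a monomorphism, the projection $A\times_{B}h_{X}\to h_{X}$ is a monomorphism into a representable sheaf; so I may assume from the outset that $B=h_{X}$ with $X\in S_{et}$ and that $A\hookrightarrow h_{X}$ is a subsheaf, and it suffices to find an open $V\subseteq X$ with $A=h_{V}$ inside $h_{X}$.

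The key point, which I expect to be the only delicate step, is a descent property of subsheaves: for the subsheaf $A\hookrightarrow h_{X}$, any covering $\{U_{i}\to U\}$ in $S_{et}$, and any $s\in h_{X}(U)$, one has $s\in A(U)$ as soon as $s|_{U_{i}}\in A(U_{i})$ for all $i$. Indeed the restrictions $s|_{U_{i}}$ agree on the overlaps already in $h_{X}$, hence in $A$ because $A(-)\hookrightarrow h_{X}(-)$ is injective, so they glue to some $t\in A(U)$; the image of $t$ in $h_{X}(U)$ then restricts to $s|_{U_{i}}$ for all $i$ and therefore equals $s$ by the sheaf axiom for $h_{X}$.

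Granting this, I would call an open $W\subseteq X$ \emph{good} if the section $\iota_{W}\in h_{X}(W)=\Hom_{S}(W,X)$ attached to the inclusion lies in $A(W)$, noting that $W$ is again etale over $S$, so $W\in S_{et}$. Applying the descent property to the open covering of a union of good opens by its members shows that an arbitrary union of good opens is good; let $V$ be the largest one, the union of all good opens. The inclusion $h_{V}\subseteq A$ is immediate: any $f\in h_{V}(T)=\Hom_{S}(T,V)$ maps in $h_{X}(T)$ to $\iota_{V}\circ f$, which is the restriction along $f$ of $\iota_{V}\in A(V)$ and so lies in $A(T)$. Conversely, given $f\in A(T)\subseteq\Hom_{S}(T,X)$, the map $f$ is a morphism between schemes etale over $S$, hence etale and in particular open, so $W:=f(T)$ is open in $X$ and $f\colon T\to W$ is surjective etale, i.e.\ a covering in $S_{et}$. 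As $\iota_{W}$ restricts along this covering to $f\in A(T)$, the descent property gives $\iota_{W}\in A(W)$; thus $W$ is good, $W\subseteq V$, and $f$ factors through $V$. This proves $A\subseteq h_{V}$, whence $A=h_{V}$, and $V\hookrightarrow X$ is the desired open immersion. The crux is thus the interplay between the descent property for $A$ and the openness of etale morphisms, which is exactly what lets me take $W=f(T)$ and recognize $A$ as the sheaf represented by the open $V$.
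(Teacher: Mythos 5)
Your proof is correct and follows essentially the same route as the paper's: both reduce to a subsheaf of a (representable) sheaf, exploit that a morphism in $S_{et}$ is etale and hence has open image, note that $T\rightarrow f(T)$ is then an etale covering with $T\times_{f(T)}T=T\times_{X}T$, and use the sheaf property of the subsheaf (your ``descent property'') to conclude that the tautological section over $f(T)$ lies in the subsheaf. The only cosmetic difference is that you build the representing open as a union of ``good'' opens, whereas the paper takes the largest open $U$ with $b\vert_{U}\in B'(U)$ directly; these are the same object.
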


\begin{proof}
Let $B'$ be a subsheaf of $B\in\Shv(S_{et})$, fix $T\in S_{et}$
and $b\in B(T)$. Since $B'$ is a Zariski subsheaf of $B$, there
is a largest open $U$ of $T$ such that $b\vert_{U}\in B'(U)$. Let
$f:T'\rightarrow T$ be any morphism in $S_{et}$ such that $c=f^{\ast}b\in B(T')$
belongs to $B'(T')$. Let $V=f(T')$ be the image of $f$. Since $f$
is etale, $V$ is open in $T$, $f:T'\twoheadrightarrow V$ is an
etale covering, and $T''=T'\times_{V}T'$ equals $T'\times_{T}T'$.
Let $p_{i}:T''\rightarrow T'$ be the projections. Since $p_{1}^{\ast}f^{\ast}b=p_{2}^{\ast}f^{\ast}b$
in $B(T'')$, $p_{1}^{\ast}c=p_{2}^{\ast}c$ in $B'(T'')$; since
$B'$ is a sheaf, $c=f^{\ast}b'$ for some $b'\in B'(V)$; since $B$
is a sheaf and $f^{\ast}b'=c=f^{\ast}(b\vert_{V})$ in $B(T')$, $b'=b\vert_{V}$
in $B(V)$. So $b\vert_{V}=b'$ belongs to $B'(V)$ and $V\subset U$.
It follows that $U\hookrightarrow T$ represents $B^{\prime}\times_{B}T\hookrightarrow T$,
which proves the lemma.
\end{proof}
For later use, we also record here the following consequence.
\begin{prop}
\label{prop:RepCrit4et}If $B=\cup B_{i}$ in $\Shv(S_{et})$ with
$B_{i}$ representable by $(X_{i},b_{i})$, $X_{i}\in S_{et}$, $b_{i}\in B_{i}(X_{i})$,
then $B$ is representable by $(X,b)$ for some $X\in S_{et}$, $b\in B(X)$,
$B_{i}\hookrightarrow B$ is representable by an open immersion $X_{i}\hookrightarrow X$,
with $X=\cup X_{i}$ and $b\vert_{X_{i}}=b_{i}$.
\end{prop}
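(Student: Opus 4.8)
The plan is to read the hypothesis $B=\cup B_{i}$ as the statement that the $B_{i}$ are subsheaves of $B$ whose union is all of $B$, each of which happens to be representable: the pair $(X_{i},b_{i})$ provides an isomorphism $h_{X_{i}}\simeq B_{i}$ under which $b_{i}$ is the universal section. Since $B_{i}\hookrightarrow B$ is a monomorphism, Lemma~\ref{lem:MonoInShvetAreRepByOpenIm} already tells me it is representable by open immersions, and the whole argument consists in gluing the $X_{i}$ along the opens prescribed by the intersections $B_{i}\cap B_{j}$ and then checking that the resulting $X$ represents $B$.

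First I would produce the gluing data. For each pair $(i,j)$ the intersection $B_{i}\cap B_{j}=h_{X_{i}}\times_{B}h_{X_{j}}$ is a subsheaf of $B_{i}=h_{X_{i}}$, being the pullback of the monomorphism $B_{j}\hookrightarrow B$ along $B_{i}\to B$; by Lemma~\ref{lem:MonoInShvetAreRepByOpenIm} it is represented by an open immersion $X_{ij}\hookrightarrow X_{i}$. Symmetrically it is represented by an open $X_{ji}\hookrightarrow X_{j}$, and since both open subschemes represent the same subsheaf $B_{i}\cap B_{j}$ there is a canonical isomorphism $\varphi_{ij}:X_{ij}\to X_{ji}$. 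Because every $X_{ij}$, and every triple overlap $X_{ij}\cap X_{ik}$, is pinned down as the representing object of a subsheaf of the single sheaf $B$ (namely $B_{i}\cap B_{j}$, resp.\ $B_{i}\cap B_{j}\cap B_{k}$), the normalization $\varphi_{ii}=\mathrm{id}$ and the cocycle identity $\varphi_{ik}=\varphi_{jk}\circ\varphi_{ij}$ hold automatically, the composite of canonical isomorphisms being canonical. I may therefore glue the $X_{i}$ along the $\varphi_{ij}$ into a scheme $X=\cup X_{i}$ carrying the $X_{i}$ as an open cover; as being etale over $S$ is local on the source, $X\in S_{et}$.

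Next I would build $b$. Each $b_{i}$, viewed in $B(X_{i})$ through $B_{i}\hookrightarrow B$, restricts on $X_{ij}$ to the image of the universal section of $B_{i}\cap B_{j}$; tracing this section through the two inclusions $B_{i}\cap B_{j}\hookrightarrow B_{i}\hookrightarrow B$ and $B_{i}\cap B_{j}\hookrightarrow B_{j}\hookrightarrow B$, which agree by the very definition of the fiber product, shows that $b_{i}\vert_{X_{ij}}$ and $b_{j}\vert_{X_{ji}}$ coincide in $B(X_{ij})$ after the identification $\varphi_{ij}$. Since $\{X_{i}\}$ is a Zariski, hence etale, cover of $X$ and $B$ is a sheaf, the family $(b_{i})$ glues to a unique $b\in B(X)$ with $b\vert_{X_{i}}=b_{i}$, and $b$ induces a morphism $h_{X}\to B$.

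It remains to see that $h_{X}\to B$ is an isomorphism, which I would do by showing it is simultaneously an epimorphism and a monomorphism. The epimorphism property is immediate: the image is a subsheaf containing every $B_{i}$, hence equals $\cup B_{i}=B$. The monomorphism assertion is the one genuine obstacle, and I expect it to be where the argument must be handled with care. As monomorphisms of sheaves are exactly the sectionwise injective maps, I would fix $T\in S_{et}$ and $t_{1},t_{2}\in X(T)$ with equal image in $B(T)$, cover $T$ by the opens $V_{ij}=t_{1}^{-1}(X_{i})\cap t_{2}^{-1}(X_{j})$, and observe that on each such $V_{ij}$ the restrictions $t_{1}\vert_{V_{ij}}\in B_{i}(V_{ij})$ and $t_{2}\vert_{V_{ij}}\in B_{j}(V_{ij})$ share a common image in $B(V_{ij})$, so this common element lies in $(B_{i}\cap B_{j})(V_{ij})$ and is a single morphism $V_{ij}\to X_{ij}=X_{ji}\subset X$; thus $t_{1}$ and $t_{2}$ agree on $V_{ij}$, and agree globally because $h_{X}$ is a sheaf. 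Hence $h_{X}\simeq B$, which is exactly the assertion that $B$ is representable by $(X,b)$, with $B_{i}\hookrightarrow B$ corresponding to the open immersion $X_{i}\hookrightarrow X$, $X=\cup X_{i}$, and $b\vert_{X_{i}}=b_{i}$.
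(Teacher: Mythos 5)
Your proof is correct and takes essentially the same route as the paper, whose own proof simply invokes Lemma~\ref{lem:MonoInShvetAreRepByOpenIm} together with a variant of the Stacks project's gluing criterion for representability (Tag 01JJ); your argument spells out precisely that gluing construction and its verification. Nothing to object to.
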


\begin{proof}
This follows from the previous lemma and a variant of \cite[\href{https://stacks.math.columbia.edu/tag/01JJ}{Tag 01JJ}]{SP}.
\end{proof}
We denote by $\alpha$ the restriction of $\theta_{\ast}$ to $\AlgSp_{et}(S)$,
an equivalence of categories 
\[
\alpha:\AlgSp_{et}\left(S\right)\rightarrow\Shv\left(S_{et}\right).
\]

\begin{rem}
\label{rem:RepAlgSpIsRepEtale}Let $A\in\AlgSp_{et}(S)$ and $B=\alpha(A)\in\Shv(S_{et})$.
If $A$ is a scheme, i.e.~$A$ is representable by an $S$-scheme
$X$, then $X\rightarrow S$ is etale, i.e.~$X\in S_{et}$, and $X$
represents $B$. Conversely if $B$ is representable by $X\in S_{et}$,
then $X$, viewed as an algebraic space over $S$, is etale over $S$
and $\alpha(X)\simeq B$, so $A\simeq X$, i.e. $A$ is a scheme.
\end{rem}

\begin{prop}
\label{prop:BaseChange4alpha}For a morphism $f:S^{\prime}\rightarrow S$,
there is a $2$-commutative diagram
\[
\xymatrix{\Shv(S_{et})\ar@<2pt>[r]^{\theta^{\ast}}\ar[d]_{f^{\ast}} & \AlgSp_{et}(S)\ar@<2pt>[l]^{\alpha}\ar@{^{(}->}[r]\sp(0.42){\mathrm{inc}}\ar[d]^{f^{\ast}} & \Shv((\Sch/S)_{fppf})\ar[d]^{f^{\ast}}\\
\Shv(S_{et}^{\prime})\ar@<2pt>[r]^{\theta^{\ast}} & \AlgSp_{et}(S^{\prime})\ar@<2pt>[l]^{\alpha}\ar@{^{(}->}[r]\sp(0.42){\mathrm{inc}} & \Shv((\Sch/S')_{fppf})
}
\]
with right adjoint $2$-commutative diagram
\[
\xymatrix{\Shv(S_{et})\ar@<2pt>[r]^{\theta^{\ast}} & \AlgSp_{et}(S)\ar@<2pt>[l]^{\alpha} & \Shv((\Sch/S)_{fppf})\ar[l]\sb(0.55){\theta^{\ast}\theta_{\ast}}\\
\Shv(S_{et}^{\prime})\ar@<2pt>[r]^{\theta^{\ast}}\ar[u]^{f_{\ast}} & \AlgSp_{et}(S^{\prime})\ar@<2pt>[l]^{\alpha}\ar[u]_{f_{\ast}^{et}} & \Shv((\Sch/S')_{fppf})\ar[l]\sb(0.55){\theta^{\ast}\theta_{\ast}}\ar[u]_{f_{\ast}}
}
\]
\end{prop}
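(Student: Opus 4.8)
The plan is to establish the first (left-adjoint) diagram directly on representables, and then deduce the second (right-adjoint) diagram formally by passing to mates. Throughout I read the vertical arrows as follows: $f^{\ast}$ on $\Shv((\Sch/S)_{fppf})$ is the pullback of the induced morphism of big fppf topoi, $f^{\ast}$ on $\Shv(S_{et})$ is the pullback of the morphism of small etale sites induced by the base change functor $(-)\times_{S}S':S_{et}\rightarrow S'_{et}$, and $f^{\ast}$ on $\AlgSp_{et}(S)$ is the geometric base change $A\mapsto A\times_{S}S'$, which indeed lands in $\AlgSp_{et}(S')$ since both algebraic spaces and etale morphisms are stable under base change.

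First I would dispatch the right-hand square of the first diagram, which is essentially tautological: by construction $f^{\ast}$ on $\AlgSp_{et}(S)$ is the restriction of the big fppf pullback. On a representable $H_{X}$ one has $f^{\ast}H_{X}\simeq H_{X\times_{S}S'}$; every $A\in\AlgSp_{et}(S)$ is a quotient $H_{U}/H_{R}$ by an etale equivalence relation, computed in the big fppf topos, as in the proof of \ref{prop:FromAlgSp2EtShv}; and $f^{\ast}$ is cocontinuous. Hence $f^{\ast}(\mathrm{inc}\,A)\simeq\mathrm{inc}(A\times_{S}S')=\mathrm{inc}(f^{\ast}A)$, giving $2$-commutativity with $\mathrm{inc}$.

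The substantive point is the left-hand square, i.e.~that $\theta^{\ast}$ intertwines small etale and big fppf pullback. Here all four functors are left adjoints, hence cocontinuous, and every object of $\Shv(S_{et})$ is a coequalizer of coproducts of representables $h_{X}$, $X\in S_{et}$ (again as in \ref{prop:FromAlgSp2EtShv}); so it suffices to produce a natural isomorphism on these generators and extend by cocontinuity. On $h_{X}$ one route gives $f^{\ast}\theta^{\ast}h_{X}\simeq f^{\ast}H_{X}\simeq H_{X\times_{S}S'}$, using $\theta^{\ast}h_{X}\simeq H_{X}$ from the proof of \ref{prop:FromAlgSp2EtShv}, while the other gives $\theta^{\ast}f^{\ast}h_{X}\simeq\theta^{\ast}h_{X\times_{S}S'}\simeq H_{X\times_{S}S'}$, using that small etale pullback sends $h_{X}$ to $h_{X\times_{S}S'}$ since its morphism of sites is induced by $(-)\times_{S}S'$. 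These identifications are natural in $X$, so they assemble into $f^{\ast}\circ\theta^{\ast}\simeq\theta^{\ast}\circ f^{\ast}$; composing with the equivalences $\alpha$ yields the first diagram.

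Finally I would obtain the second diagram by the mate correspondence: taking right adjoints in a square of left adjoints that $2$-commutes produces a square of right adjoints that $2$-commutes, since the mate of an invertible $2$-cell is invertible. The right adjoint of $\theta^{\ast}$ is the quasi-inverse $\alpha$, that of $\mathrm{inc}$ is $\theta^{\ast}\theta_{\ast}$ by \ref{prop:FromAlgSp2EtShv}, and that of each $f^{\ast}$ on the two topoi is $f_{\ast}$; the functor $f_{\ast}^{et}$ is the right adjoint of base change on $\AlgSp_{et}$, which exists because, by the left square just proved, $f^{\ast}$ on $\AlgSp_{et}$ is conjugate via $\alpha$ to $f^{\ast}$ on $\Shv(S_{et})$, so that $f_{\ast}^{et}\simeq\theta^{\ast}\circ f_{\ast}\circ\alpha$. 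Passing to mates in the two squares of the first diagram then produces exactly the two squares of the second. The only step demanding genuine care is the representable computation underlying the left square, namely the compatibility of small etale and big fppf pullback through $\theta$, together with the routine verification that every functor in sight is cocontinuous so that isomorphisms may be checked on generators; the rest is formal adjunction bookkeeping.
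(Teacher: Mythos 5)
Your proposal is correct and takes essentially the same route as the paper: your representable computation $f^{\ast}\theta^{\ast}h_{X}\simeq H_{X\times_{S}S'}\simeq\theta^{\ast}f^{\ast}h_{X}$ is exactly the paper's observation that the underlying diagram of sites commutes (which gives both outer rectangles at once, the pushforward one literally and the pullback one by uniqueness of left adjoints), and the remaining squares and the second diagram are obtained, as you do, from proposition~\ref{prop:FromAlgSp2EtShv} and passage to right adjoints. No gaps.
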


\begin{proof}
Consider the commutative diagram of morphisms of sites
\[
\xymatrix{(\Sch/S)_{fppf}\ar[r]\sp(0.6){\theta} & S_{et}\\
(\Sch/S')_{fppf}\ar[u]^{f}\ar[r]\sb(0.6){\theta} & S_{et}^{\prime}\ar[u]_{f}
}
\]
whose underlying continuous functors, going in opposite directions,
are given by 
\[
f(X)=X\times_{S}S'\qquad\text{and}\qquad\theta(X)=X.
\]
The corresponding pull-back and push-out functors respectively give
the forward and backward outer rectangles of our diagrams, and the
$2$-commutativity of all remaining possible squares follow from proposition~\ref{prop:FromAlgSp2EtShv}. 
\end{proof}
\begin{rem}
\label{rem:ZZZpushoutNOTequal}While $f^{\ast}:\AlgSp_{et}(S)\rightarrow\AlgSp_{et}(S')$
is the restriction of the eponymous functor on fppf sheaves, its right
adjoint $f_{\ast}^{et}:\AlgSp_{et}(S')\rightarrow\AlgSp_{et}(S)$
is given by 
\[
\xymatrix{\Shv(S_{et})\ar[r]^{\theta^{\ast}} & \AlgSp_{et}(S) & \Shv((\Sch/S)_{fppf})\ar[l]\sb(0.55){\theta^{\ast}\theta_{\ast}}\\
\Shv(S_{et}^{\prime})\ar[u]^{f_{\ast}} & \AlgSp_{et}(S^{\prime})\ar[l]_{\alpha}\ar@{^{(}->}[r]\sp(0.42){\mathrm{inc}}\ar[u]_{f_{\ast}^{et}} & \Shv((\Sch/S')_{fppf})\ar[u]_{f_{\ast}}
}
\]
\end{rem}

Let $\overline{s}\rightarrow S$ be a geometric point of $S$ over
$s\in S$, $k(s,\overline{s})$ the separable closure of $k(s)$ in
$k(\overline{s})$, $\mathcal{O}_{S,\overline{s}}^{sh}$ the strict
henselization of $\mathcal{O}_{S,s}$ with respect to $k(s)\hookrightarrow k(\overline{s})$.
Then $k(s,\overline{s})$ is the residue field of $\mathcal{O}_{S,\overline{s}}^{sh}$
and the action of $\Gamma(s)=\Gal(k(s,\overline{s})/k(s))$ on $k(s,\overline{s})$
lifts uniquely to a continuous action on the local ring $\mathcal{O}_{S,\overline{s}}^{sh}$.
Evaluation at $\overline{s}$ gives points of the topoi $\Shv((\Sch/S)_{fppf})$
and $\Shv(S_{et})$, with stalks
\[
\xyC{3pc}\xymatrix{\Shv(S_{et})\ar[r]\sp(0.4){\theta^{\ast}} & \Shv((\Sch/S)_{fppf})\ar[r]\sp(0.57){(-)(\overline{s})} & \Set_{\Aut(\overline{s}/s)}\ar[r]\sp(0.57){\mathrm{forget}} & \Set}
\]
where for any group $H$, $\Set_{H}$ is the category of sets with
a left action of $H$. 
\begin{prop}
\label{prop:CompStalks4Alpha}For $A$ in $\AlgSp_{et}(S)$ with image
$B=\alpha(A)$ in $\Shv(S_{et})$, there are functorial $\Aut(\overline{s}/s)$-equivariant
isomorphisms of $\Gamma(s)$-sets
\[
B_{\overline{s}}=\underrightarrow{\lim}_{(X,x)\in S_{et}(\overline{s})}B(X)\simeq A(\mathcal{O}_{S,\overline{s}}^{sh})\simeq A\left(k(s,\overline{s})\right)\simeq A(\overline{s})
\]
where $S_{et}(\overline{s})$ is the category of pairs $(X,x)$ with
$X\in S_{et}$, $x\in X(\overline{s})$. 
\end{prop}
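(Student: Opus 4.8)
The plan is to read the asserted chain from left to right, establishing each identification as a canonical bijection; functoriality in $A$ and the equivariance will then come for free, since $\Aut(\overline{s}/s)$ acts on $\mathcal{O}_{S,\overline{s}}^{sh}$, on $k(s,\overline{s})$ and on $k(\overline{s})$ through its restriction to $\Gamma(s)=\Gal(k(s,\overline{s})/k(s))$, and every map below is built solely from the structure morphisms of these rings, hence is $\Gamma(s)$-equivariant by construction.

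First I would identify the leftmost two objects. Since $B=\alpha(A)=\theta_{\ast}A$, we have $B(X)=A(X)$ for every $X\in S_{et}$, so $B_{\overline{s}}=\underrightarrow{\lim}_{(X,x)}A(X)$ over the category $S_{et}(\overline{s})$. The affine étale neighborhoods are cofinal, and $\underleftarrow{\lim}_{(X,x)\in S_{et}(\overline{s})}X=\Spec\mathcal{O}_{S,\overline{s}}^{sh}$, i.e.~$\mathcal{O}_{S,\overline{s}}^{sh}=\underrightarrow{\lim}\,\Gamma(X,\mathcal{O}_{X})$. Because $A\rightarrow S$ is etale it is locally of finite presentation, so the limit formalism for algebraic spaces turns this cofiltered limit of affine schemes into a filtered colimit of sets, giving $A(\mathcal{O}_{S,\overline{s}}^{sh})\simeq\underrightarrow{\lim}A(X)=B_{\overline{s}}$. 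I expect this to be the genuine obstacle: one must justify continuity for $A$ as an \emph{algebraic space}, not merely a scheme. I would either invoke the continuity of algebraic spaces locally of finite presentation directly, or reduce to the representable case as follows. By the proof of Proposition~\ref{prop:FromAlgSp2EtShv}, $B\simeq h_{U}/h_{R}$ with $U,R\in S_{et}$ honest schemes; since $\Spec\mathcal{O}_{S,\overline{s}}^{sh}$ is strictly henselian local every etale cover of it splits, so the sheaf quotient $A(\mathcal{O}_{S,\overline{s}}^{sh})$ is the naive set quotient $U(\mathcal{O}_{S,\overline{s}}^{sh})/R(\mathcal{O}_{S,\overline{s}}^{sh})$, reducing the identification to the classical stalk computation for representable etale sheaves, where the limit formalism is that of EGA IV.

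Next I would pass to the residue field. The ring $\mathcal{O}_{S,\overline{s}}^{sh}$ is strictly henselian local with residue field $k(s,\overline{s})$, and $A\rightarrow S$, being etale, is formally etale. The unique infinitesimal-then-henselian lifting of sections along $\mathcal{O}_{S,\overline{s}}^{sh}\twoheadrightarrow k(s,\overline{s})$ then yields a bijection $A(\mathcal{O}_{S,\overline{s}}^{sh})\simeq A(k(s,\overline{s}))$; for an algebraic space one reduces again to the scheme case via an etale presentation, or observes that $A\times_{S}\Spec\mathcal{O}_{S,\overline{s}}^{sh}$ is an etale algebraic space over a strictly henselian local base and is therefore a scheme.

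Finally, $k(\overline{s})$ is separably closed and $k(s,\overline{s})$ is by definition the separable closure of $k(s)$ inside it, so $k(\overline{s})/k(s,\overline{s})$ is purely inseparable and $\Spec k(\overline{s})\rightarrow\Spec k(s,\overline{s})$ is a universal homeomorphism; the topological invariance of the small etale site gives $A(k(s,\overline{s}))\simeq A(k(\overline{s}))=A(\overline{s})$. Composing the three bijections produces the required chain. Each map is manifestly natural in $A$ and assembled from $\Gamma(s)$-equivariant data, and the $\Aut(\overline{s}/s)$-action on all four objects factors through the natural identification $\Aut(\overline{s}/s)\simeq\Gamma(s)$ (restriction to the separable closure is bijective because $k(\overline{s})/k(s,\overline{s})$ is purely inseparable); hence the isomorphisms are $\Aut(\overline{s}/s)$-equivariant isomorphisms of $\Gamma(s)$-sets, as claimed.
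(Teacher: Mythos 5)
Your overall route is the paper's: identify $B_{\overline{s}}$ with $A(\mathcal{O}_{S,\overline{s}}^{sh})$ via the limit description $\Spec(\mathcal{O}_{S,\overline{s}}^{sh})=\underleftarrow{\lim}X$ over affine etale neighborhoods together with local finite presentation of $A$, and handle the two residue-field comparisons by reducing to an etale presentation $A\simeq U/R$ with $U,R\in S_{et}$, using that over each of the three strictly henselian rings the sheaf quotient is the naive quotient $U(\mathcal{O})/R(\mathcal{O})$. That is exactly the paper's argument. However, your justification of the \emph{last} isomorphism does not work as stated. A geometric point is only required to have separably closed residue field, so $k(\overline{s})$ may well be transcendental over $k(s)$; in that case $k(\overline{s})$ is not algebraic, let alone purely inseparable, over the separable closure $k(s,\overline{s})$ of $k(s)$ inside it, and $\Spec k(\overline{s})\rightarrow\Spec k(s,\overline{s})$ is not a universal homeomorphism. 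The topological-invariance argument therefore fails in the stated generality (it is fine only in the special case $k(\overline{s})=\overline{k(s)}$). The correct, elementary replacement --- which is what the paper uses after reducing to $X\in S_{et}$ --- is that the fiber $X_{s}$ is a disjoint union of spectra of finite separable extensions $L$ of $k(s)$, and every $k(s)$-embedding $L\hookrightarrow k(\overline{s})$ automatically lands in $k(s,\overline{s})$, so $X(k(s,\overline{s}))\rightarrow X(k(\overline{s}))$ is bijective with no hypothesis on $k(\overline{s})$. The same error infects your parenthetical claim that $\Aut(\overline{s}/s)\rightarrow\Gamma(s)$ is bijective; only the existence of this homomorphism is needed, and injectivity is false for transcendental $k(\overline{s})$.

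A second, less damaging, slip: the alternative you offer for the henselian-lifting step --- that $A\times_{S}\Spec(\mathcal{O}_{S,\overline{s}}^{sh})$ is automatically a scheme because it is an etale algebraic space over a strictly henselian local base --- is false for non-punctual bases. The paper's theorem~\ref{thm:Main} shows that representability over such a base is the nontrivial condition that the localization maps be injective on orbits, and over, say, a strictly henselian valuation ring of rank $2$ one can exhibit non-representable etale algebraic spaces (take $j_{U!}$ of a nonrepresentable space over the non--strictly-henselian localization at the intermediate prime). This does not sink your proof, since your other alternative --- reduce to $U$ and $R$ via the presentation and apply the henselian lifting of \cite[18.5.4.4]{EGA4.4} to schemes --- is precisely the paper's argument; but the remark should be removed.
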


\begin{proof}
For $(X,x)\in S_{et}(\overline{s})$, there is a canonical factorization
of $x:\overline{s}\rightarrow X$ as
\[
\overline{s}\twoheadrightarrow\Spec\left(k(s,\overline{s})\right)\hookrightarrow\Spec\left(\mathcal{O}_{S,\overline{s}}^{sh}\right)\rightarrow X.
\]
Evaluating on $A$ and taking colimits gives an $\Aut(\overline{s}/s)$-equivariant
sequence 
\[
\underrightarrow{\lim}_{(X,x)\in S_{et}(\overline{s})}B(X)\rightarrow A(\mathcal{O}_{S,\overline{s}}^{sh})\rightarrow A\left(k(s,\overline{s})\right)\rightarrow A(\overline{s}).
\]
We have to show that all maps are bijections. In the colimit, we may
restrict the indexing category to the full initial category $U_{et}^{af}(\overline{s})$
of pairs $(X,x)$ where $X$ is affine over some fixed affine neighborhood
$U$ of $s$ in $S$. Then by \cite[\href{https://stacks.math.columbia.edu/tag/04GW}{Tag 04GW}]{SP},
\[
\Spec\left(\mathcal{O}_{S,\overline{s}}^{sh}\right)=\underleftarrow{\lim}_{(X,x)\in U_{et}^{af}(\overline{s})}X\quad\text{in}\quad\Sch/S.
\]
On the other hand, $A$ is locally of finite presentation over $S$
by \cite[\href{https://stacks.math.columbia.edu/tag/0468}{Tag 0468}]{SP},
so
\[
\underrightarrow{\lim}_{(X,x)\in S_{et}(\overline{s})}B(X)\stackrel{\simeq}{\longrightarrow}A(\mathcal{O}_{S,\overline{s}}^{sh})
\]
by~\cite[\href{https://stacks.math.columbia.edu/tag/01ZC}{Tag 01ZC}]{SP}.
For $A=\Hom_{S}(-,X)$ with $X\in S_{et}$, we have isomorphisms
\[
A\left(\mathcal{O}_{S,\overline{s}}^{sh}\right)\stackrel{\simeq}{\longrightarrow}A\left(k(s,\overline{s})\right)\stackrel{\simeq}{\longrightarrow}A(\overline{s})
\]
by \cite[18.5.4.4]{EGA4.4} for the first map, and using that the
fiber $X_{s}\rightarrow s$ is a disjoint union of spectra of finite
separable extensions of $k(s)$ for the second map. For a general
$A$, choose a presentation $A\simeq(U/R)_{fppf}$ with $U,R\in S_{et}$.
It is now sufficient to establish that for any $\mathcal{O}$ in $\{\mathcal{O}_{S,\overline{s}}^{sh},k(s,\overline{s}),k(\overline{s})\}$,
the map $U(\mathcal{O})\rightarrow A(\mathcal{O})$ identifies $A(\mathcal{O})$
with the quotient of $U(\mathcal{O})$ by the equivalence relation
$R(\mathcal{O})$. Since $U\times_{A}U=R$ as presheaves on $\Sch/S$,
$U(\mathcal{O})/R(\mathcal{O})\rightarrow A(\mathcal{O})$ is injective.
Since $U\rightarrow A$ is etale surjective and $\mathcal{O}$ is
strictly henselian, it is also surjective. 
\end{proof}

\section{From etale sheaves to finite etale sheaves}
\begin{defn}
A morphism of schemes $f:X\rightarrow Y$ is \emph{fet} if it factors
as $X\rightarrow U\hookrightarrow Y$ where $f':X\rightarrow U$ is
finite etale and $U\hookrightarrow Y$ is an open immersion.
\end{defn}

Note that $f^{\prime}(X)$ is then clopen in $U$, and open in $Y$.
In particular, we may always take $U=f(X)$. Thus $f:X\rightarrow Y$
is fet if and only if $f(X)$ is open in $Y$ and $f:X\rightarrow f(X)$
is finite etale. A fet morphism is separated and etale, and an etale
morphism $f$ is fet if and only if $X\rightarrow f(X)$ is finite.
Fet morphisms are plainly stable under arbitrary base change. A morphism
$f:X\rightarrow Y$ between a fet $S$-scheme $X$ and a separated
etale $S$-scheme $Y$ is fet, and its image $f(X)$ is fet over $S$.
Indeed if $U$ is the image of $X$ in $S$, then $f$ factors as
$X\rightarrow Y_{U}\hookrightarrow Y$. Since $X$ is finite etale
over $U$ and $Y_{U}$ is separated etale over $U$, $X\rightarrow Y_{U}$
is finite etale and $f(X)$ is finite etale over $U$, so $X\rightarrow Y$
is fet and $f(X)\rightarrow S$ is fet. 
\begin{defn}
We denote by $S_{fet}$ the site whose underlying category is the
strictly full subcategory of fet $S$-schemes in $S_{et}$, equipped
with the induced topology.
\end{defn}

By~\cite[III, Corollaire 3.3]{SGA4.1} and the next lemma, coverings
in $S_{fet}$ are just coverings in $S_{et}$, i.e.~jointly surjective
families of morphisms in $S_{fet}$ with fixed target. 
\begin{lem}
The category $S_{fet}$ is stable under fiber products. 
\end{lem}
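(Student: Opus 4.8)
The plan is to show that for objects $X,Y,Z$ of $S_{fet}$ equipped with morphisms $g:X\to Z$ and $h:Y\to Z$, the fiber product $W=X\times_Z Y$ formed in the category of schemes is again fet over $S$; it then automatically represents the fiber product inside the full subcategory $S_{fet}$. Since every object of $S_{fet}$ is separated and etale over $S$, and since a morphism between schemes etale (resp.~separated) over $S$ is itself etale (resp.~separated) and these properties are stable under composition and base change, the structure morphism $W\to S$ is at once separated and etale. By the characterization recorded above — an etale morphism is fet if and only if it is finite onto its (necessarily open) image — the only real content is therefore the finiteness of $W\to\mathrm{Im}(W\to S)$. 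I would emphasize at the outset that this finiteness cannot be obtained by merely composing fet structures: fet morphisms are not stable under composition (the open image of the first map need not be saturated for the second), so the projection $W\to Y$ being fet together with $Y\to S$ being fet does \emph{not} formally yield that $W\to S$ is fet.

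The key step is to realize $W$ as a clopen subscheme of the absolute product $X\times_S Y$. Indeed $W=X\times_Z Y=(X\times_S Y)\times_{Z\times_S Z}Z$, where the last map is the diagonal $\Delta_{Z/S}:Z\to Z\times_S Z$, so that $W\hookrightarrow X\times_S Y$ is precisely the base change of $\Delta_{Z/S}$ along $(g,h):X\times_S Y\to Z\times_S Z$. Now $Z\to S$ is etale, hence unramified, so $\Delta_{Z/S}$ is an open immersion; and $Z\to S$ is separated (fet implies separated), so $\Delta_{Z/S}$ is also a closed immersion. Thus $\Delta_{Z/S}$, and therefore its base change $W\hookrightarrow X\times_S Y$, is a clopen immersion.

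It then suffices to establish two facts. First, $X\times_S Y\to S$ is fet: writing $U$ for the intersection of the open images of $X$ and $Y$ in $S$, the restrictions $X_U$ and $Y_U$ are finite etale over $U$, the product $X\times_S Y$ is supported over $U$ and equals $X_U\times_U Y_U$, and a fiber product of two finite etale $U$-schemes is finite etale over $U$; since $U$ is open in $S$ this exhibits $X\times_S Y\to S$ as fet. Second, a clopen subscheme of a fet $S$-scheme is again fet over $S$, since restricting a finite etale cover to a clopen subscheme yields a finite etale morphism onto an open image. Combining these two facts with the clopen immersion of the previous paragraph shows that $W\to S$ is fet, which proves the lemma. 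The one point demanding genuine care — the main obstacle — is exactly this finiteness of $W\to S$; it is rescued not by composing the fet structures on $W\to Y$ and $Y\to S$, but by the detour through $X\times_S Y$, where restriction to the common open base $U$ makes everything honestly finite etale and so removes the failure-of-saturation phenomenon that obstructs composition in general.
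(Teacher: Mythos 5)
Your proof is correct. It shares with the paper's argument the essential reduction: everything is pulled back to the common open $V=U_1\cap U_2$ of the images in $S$, where the fet condition becomes honest finite etaleness and compositions behave. Where you diverge is in how the relative fiber product is then controlled. The paper writes $X\times_Z Y$ as the top of a cartesian ladder over $Z_V$, using that a morphism between finite etale $V$-schemes (such as $X_V\to Z_V$) is itself finite etale, so that $X\times_Z Y\to Z_V\to V$ is a composition of finite etale maps. You instead exhibit $X\times_Z Y$ as a clopen subscheme of the absolute product $X\times_S Y$ via the diagonal $\Delta_{Z/S}$, which is an open immersion because $Z/S$ is unramified and a closed immersion because $Z/S$ is separated; the absolute product is finite etale over $V$ and a clopen subscheme of a finite etale $V$-scheme is again finite etale over $V$. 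Both mechanisms are standard and both land on the same factorization of $X\times_Z Y$ through $V$; yours has the small advantage of isolating exactly where separatedness of fet morphisms enters (through $\Delta_{Z/S}$ being closed), while the paper's is marginally shorter. Your warning that fet morphisms are not stable under composition --- so that one cannot conclude merely from $W\to Y$ and $Y\to S$ being fet --- is accurate and correctly identifies the trap that both arguments are designed to avoid.
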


\begin{proof}
Let $X_{1}\rightarrow X_{3}$ and $X_{2}\rightarrow X_{3}$ be morphisms
in $S_{fet}$, $U_{i}$ the image of $X_{i}$ in $S$, and $V=U_{1}\cap U_{2}$.
The cartesian diagram 
\[
\xyR{2pc}\xyC{2pc}\xymatrix{X_{1}\times_{X_{3}}X_{2}\ar[r]\ar[d] & X_{2,V}\ar[r]\ar[d] & X_{2}\ar[d]\\
X_{1,V}\ar[r]\ar[d] & X_{3,V}\ar@{^{(}->}[r]\ar@{^{(}->}[d] & X_{3,U_{2}}\ar@{^{(}->}[d]\\
X_{1}\ar[r] & X_{3,U_{1}}\ar@{^{(}->}[r] & X_{3}
}
\]
shows that $X_{1}\times_{X_{3}}X_{2}$ is finite etale over $X_{3,V}=X_{3}\times_{U_{3}}V$
which is finite etale over $V$, so $X_{1}\times_{X_{3}}X_{2}$ is
finite etale over $V$ and indeed fet over $S$.
\end{proof}
We denote by $\Shv(S_{fet})$ the category of fet-sheaves, i.e. sheaves
on $S_{fet}$, and let
\[
\beta:\Shv(S_{et})\rightarrow\Shv(S_{fet})
\]
be the restriction functor. This is usually badly behaved. 
\begin{example}
Let $\mathcal{O}$ be the local ring of $\mathbb{Z}[X]/(X^{2}+1)$
at the prime $P=(X-2)$ above $p=5$. Take $S=\Spec(\mathbb{Z}_{(p)})$,
$X=\Spec(\mathcal{O})$, $X'$ the $S$-scheme obtained by gluing
two copies of $X$ along its generic fiber $Y$, $\iota:Y\hookrightarrow X$
and $a,b:X\hookrightarrow X'$ the corresponding open embeddings.
Let $\iota:\mathcal{Y}\hookrightarrow\mathcal{X}$ and $a,b:\mathcal{X}\rightarrow\mathcal{X}'$
be the induced morphisms between the corresponding representable etale
sheaves on $S$. Then $\beta(\iota)$ is an isomorphism while $\iota$
is not, so $\beta$ does not reflect isomorphisms; $\beta(a)=\beta(b)$
while $a\neq b$, so $\beta$ is not faithful; and the nontrivial
automorphism of $\beta(\mathcal{Y})=\beta(\mathcal{X})$ does not
lift to any morphism of $\mathcal{X}$, so $\beta$ is not full. Note
that $\mathbb{Z}_{(p)}$ is normal, local\ldots{} but not henselian:
this is necessary by corollary~\ref{cor:BetaEquiv} below.
\end{example}

\begin{defn}
We say that $S$ is locally henselian (resp. locally strictly henselian)
if $\mathcal{O}_{S,s}$ is henselian (resp.~strictly henselian) for
every $s\in S$. 
\end{defn}

\begin{example}
\label{exa:OfLocHensel}The spectrum of an absolutely integrally closed
ring is locally strictly henselian \cite[\href{https://stacks.math.columbia.edu/tag/0DCS}{Tag 0DCS}]{SP}.
An integral normal scheme with separably closed function field is
locally strictly henselian \cite[\href{https://stacks.math.columbia.edu/tag/09Z9}{Tag 09Z9}]{SP}.
A punctual scheme is locally henselian \cite[\href{https://stacks.math.columbia.edu/tag/06RS}{Tag 06RS}]{SP}.
A henselian valuation ring is locally henselian: this follows from
Gabber's criterion for henselian pairs \cite[\href{https://stacks.math.columbia.edu/tag/09XI}{Tag 09XI}]{SP}.
\end{example}

\begin{prop}
\label{prop:IfLocHensEtcovbyFet}Suppose that $S$ is locally henselian.
Then any object of $S_{et}$ has a Zariski covering by objects of
$S_{fet}.$
\end{prop}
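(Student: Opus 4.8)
The plan is to argue pointwise. A Zariski covering of $X$ by fet opens is obtained by covering $X$ one point at a time, so it suffices to produce, for each $x\in X$ lying over some $s\in S$, an open neighbourhood of $x$ in $X$ which is fet over $S$. Choosing an affine open neighbourhood of $x$ in $X$ and an affine open $\Spec(R_{0})\subseteq S$ containing $s$ through which it maps, I may assume $X=\Spec(B)$ with $B$ étale over $R_{0}$; being affine and étale over $R_{0}$, it is of finite presentation, hence quasi-finite and separated over $\Spec(R_{0})$. Write $\mathfrak{p}\subseteq R_{0}$ for the prime corresponding to $s$, so that $A:=(R_{0})_{\mathfrak{p}}=\mathcal{O}_{S,s}$ is henselian by hypothesis.

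First I pass to the henselian local base $\Spec(A)$. The base change $X_{A}:=X\times_{\Spec R_{0}}\Spec(A)=\Spec(B\otimes_{R_{0}}A)$ is étale, separated and quasi-finite over the henselian local ring $A$, and it carries a unique point $x_{A}$ over $x$ lying above the closed point of $\Spec(A)$ (its residue field is $\kappa(x)$, finite separable over $\kappa(s)$). By the structure of separated quasi-finite morphisms over a henselian local ring \cite{SP}, $X_{A}$ splits as a disjoint union of open-and-closed subschemes $X_{A}=V\amalg V'$, where $V\to\Spec(A)$ is finite and contains every point of $X_{A}$ over the closed point — in particular $x_{A}$ — while $V'$ contains no point over the closed point. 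Being open in the étale $A$-scheme $X_{A}$, the subscheme $V$ is étale over $A$, hence finite étale over $\Spec(A)$.

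It remains to spread this out to an honest open neighbourhood of $s$ in $S$. Since $A=\varinjlim_{g\in R_{0}\setminus\mathfrak{p}}(R_{0})_{g}$, so that $\Spec(A)=\varprojlim_{g}D(g)$ with $D(g)\subseteq\Spec(R_{0})$ the basic affine opens through $s$, and $B\otimes_{R_{0}}A=\varinjlim_{g}B\otimes_{R_{0}}(R_{0})_{g}$, the idempotent cutting out $V$ is already defined over some $(R_{0})_{g}$; it yields an open-and-closed subscheme $V_{g}\subseteq X_{g}:=X\times_{\Spec R_{0}}D(g)$ restricting to $V$ over $\Spec(A)$. Finiteness of the finitely presented morphism $V\to\Spec(A)$ descends along this filtered limit, so after shrinking $g$ the map $V_{g}\to D(g)$ is finite; and it is étale because $V_{g}$ is open in the étale $D(g)$-scheme $X_{g}$. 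Hence $V_{g}\to D(g)$ is finite étale, and since $D(g)\hookrightarrow S$ is an open immersion, $V_{g}\to S$ is fet.

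Finally $X_{g}=f^{-1}(D(g))$ (with $f:X\to S$) is open in $X$, so $V_{g}$ is an open neighbourhood of $x$ in $X$ — it contains $x$ because $x_{A}\in V$ maps to $x$ — which is fet over $S$, and it is itself étale over $S$, hence an object of $S_{fet}$. Letting $x$ vary produces the desired Zariski covering. The one delicate point is this last descent: one must realise the open-and-closed decomposition, the finiteness, and (trivially) the étaleness over a single $D(g)$, which is exactly the limit formalism for finitely presented morphisms \cite{SP} applied to the system $\{X_{g}\}$ with limit $X_{A}$. The henselian hypothesis itself enters only once, through the splitting that produces the finite part $V$.
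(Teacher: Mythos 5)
Your proof is correct and follows essentially the same route as the paper: localize at $s$ to the henselian ring $\mathcal{O}_{S,s}$, split off the finite part of the resulting quasi-finite separated scheme via the henselian structure theorem, and spread the clopen decomposition and the finiteness out to a neighbourhood of $s$ by the standard limit formalism. The only difference is the choice of references for the two key inputs.
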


\begin{proof}
Let $f:X\rightarrow S$ be an etale morphism, $x\in X$, $s=f(x)$.
We have to find an open neighborhood $U$ of $x$ in $X$ such that
$f:U\rightarrow f(U)$ is finite etale. Shrinking $S$ and $X$, we
may assume that both are affine, in which case $f$ is affine, with
finite fibers. Let $X(s)\rightarrow S(s)$ be the base change of $f$
to $S(s)=\Spec(\mathcal{O}_{S,s})$. By~\cite[2.3.2]{MoBa23}, there
is a clopen decomposition $X(s)=X(s)^{f}\coprod X(s)^{\prime}$ with
$X(s)^{f}\rightarrow S(s)$ finite (and etale) and $X(s)_{s}^{\prime}=\emptyset$
-- so $x\in X(S)^{f}.$ By~\cite[\S 8]{EGA4.3}, shrinking $S$
further around $s$, we may assume that it comes from a clopen decomposition
$X=X^{f}\coprod X'$ with $X^{f}$ finite over $S$. Then $U=X^{f}$
is the desired neighborhood of $x$ in $X$. 
\end{proof}
\begin{cor}
\label{cor:BetaEquiv}If $S$ is locally henselian, then $\beta$
is an equivalence of categories. 
\end{cor}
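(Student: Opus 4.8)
The plan is to recognize this as a direct application of the comparison lemma for sites \cite[III, Th\'eor\`eme~4.1]{SGA4.1}, all of whose hypotheses have been arranged by the preceding results. Concretely, $S_{fet}$ is by definition a strictly full subcategory of $S_{et}$ carrying the induced topology; the lemma above shows that $S_{fet}$ is stable under fiber products, and the remark following the definition of $S_{fet}$ (via \cite[III, Corollaire 3.3]{SGA4.1}) identifies the coverings of an object of $S_{fet}$ with its $S_{et}$-coverings by fet-schemes. The one remaining hypothesis---that every object of $S_{et}$ admits a covering by objects of $S_{fet}$---is exactly Proposition~\ref{prop:IfLocHensEtcovbyFet}, which is where local henselianness of $S$ enters. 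Granting these, the comparison lemma asserts that the restriction functor $\beta\colon\Shv(S_{et})\rightarrow\Shv(S_{fet})$ is an equivalence of categories.

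If one prefers an explicit quasi-inverse to a black-box appeal, I would construct $\gamma\colon\Shv(S_{fet})\rightarrow\Shv(S_{et})$ as follows. For $F\in\Shv(S_{fet})$ and $X\in S_{et}$, choose by Proposition~\ref{prop:IfLocHensEtcovbyFet} a covering $\{X_i\rightarrow X\}$ with each $X_i\in S_{fet}$; since $S_{fet}$ is stable under fiber products, the schemes $X_i\times_X X_j$ again lie in $S_{fet}$, so one may set
\[
\gamma(F)(X)\;=\;\mathrm{eq}\Bigl(\textstyle\prod_i F(X_i)\rightrightarrows\prod_{i,j}F(X_i\times_X X_j)\Bigr).
\]
The content is then to check that this value is independent of the chosen covering up to canonical isomorphism, that $\gamma(F)$ is a sheaf on $S_{et}$, and that the evident natural transformations $\beta\gamma\Rightarrow\mathrm{id}$ and $\mathrm{id}\Rightarrow\gamma\beta$ are isomorphisms; the latter two reduce to the sheaf axiom for $F$ and for objects of $\Shv(S_{et})$ along fet-coverings.

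I expect the crux---whichever route is taken---to be the cofinality and descent bookkeeping: verifying that passing to a common refinement of two fet-coverings does not change the equalizer, equivalently that the sieves generated by fet-coverings are cofinal among all covering sieves in $S_{et}$. This is precisely the role played by the stability of $S_{fet}$ under fiber products together with Proposition~\ref{prop:IfLocHensEtcovbyFet}: the former guarantees that the \v{C}ech data $X_i\times_X X_j$ of a fet-covering remain within $S_{fet}$ (so that any two fet-coverings admit a common fet-refinement), while the latter guarantees that enough fet-coverings exist to compute every sheaf. Once this cofinality is in place, both the abstract comparison lemma and the hands-on inverse go through formally, with no further input from the geometry of $S$.
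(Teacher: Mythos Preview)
Your proposal is correct and takes essentially the same approach as the paper: the paper's proof is a one-line appeal to the comparison lemma \cite[III, Th\'eor\`eme~4.1]{SGA4.1}, invoking Proposition~\ref{prop:IfLocHensEtcovbyFet} for the covering hypothesis. Your additional sketch of an explicit quasi-inverse is a welcome elaboration but not part of the paper's argument.
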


\begin{proof}
This now follows from the comparison lemma of \cite[III, Théorème 4.1]{SGA4.1}.
\end{proof}
\begin{cor}
\label{cor:FetShevRepbyEt}Suppose that $S$ is locally henselian.
Then $B\in\Shv(S_{et})$ is representable if and only if $\beta(B)\in\Shv(S_{fet})$
is a union of representable subpresheaves.
\end{cor}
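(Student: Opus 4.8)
The plan is to exploit that $\beta$ is an equivalence of categories by corollary~\ref{cor:BetaEquiv}, so that it preserves and reflects monomorphisms, epimorphisms and hence arbitrary joins of subobjects. Throughout I take ``union'' to mean the join of subobjects, equivalently the requirement that the map from the coproduct of the members be an epimorphism, and I read a representable subpresheaf of a fet-sheaf as a subobject representable by some $V\in S_{fet}\subseteq S_{et}$. The key compatibility to record first is that for $V\in S_{fet}$ the representable fet-sheaf $\Hom_{S}(-,V)$ on $S_{fet}$ is exactly $\beta(h_{V})$, the restriction of the representable etale sheaf $h_{V}$; this is what will let me match up representable objects on the two sides.

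For the easy implication, suppose $B\simeq h_{X}$ with $X\in S_{et}$. Since $S$ is locally henselian, proposition~\ref{prop:IfLocHensEtcovbyFet} provides a Zariski covering $X=\bigcup_{i}U_{i}$ by opens $U_{i}\in S_{fet}$. The open immersions $U_{i}\hookrightarrow X$ exhibit $h_{X}$ as the union of the representable subobjects $h_{U_{i}}$ in $\Shv(S_{et})$, and applying the equivalence $\beta$ yields $\beta(B)=\bigcup_{i}\beta(h_{U_{i}})$, a union of representable subpresheaves since each $\beta(h_{U_{i}})$ is represented by $U_{i}\in S_{fet}$.

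For the converse, I assume $\beta(B)=\bigcup_{i}C_{i}$ with each $C_{i}\subseteq\beta(B)$ representable by some $V_{i}\in S_{fet}$, so that $C_{i}\simeq\beta(h_{V_{i}})$. Because $\beta$ is an equivalence, the subobjects $C_{i}$ of $\beta(B)$ come from unique subobjects $B_{i}\subseteq B$ with $\beta(B_{i})=C_{i}$, and $\beta$ preserves unions of subobjects, so $B=\bigcup_{i}B_{i}$. From $\beta(B_{i})\simeq\beta(h_{V_{i}})$ and the full faithfulness of $\beta$ I then obtain $B_{i}\simeq h_{V_{i}}$, so each $B_{i}$ is representable by $V_{i}\in S_{et}$. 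Proposition~\ref{prop:RepCrit4et} finally glues these into a single $X=\bigcup_{i}V_{i}\in S_{et}$ representing $B$, which is the desired conclusion.

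The only real work lies in this converse, and it concentrates into two checks: that the equivalence $\beta$ transports the covering by representable subobjects back to $\Shv(S_{et})$ without destroying the representability of the individual pieces, which is precisely where the inclusion $S_{fet}\subseteq S_{et}$ and the identity $\beta(h_{V_{i}})\simeq C_{i}$ are essential; and that an arbitrary union of representable subsheaves of an etale sheaf is again representable, which is exactly the content of proposition~\ref{prop:RepCrit4et}. I anticipate no further obstacle, since corollary~\ref{cor:BetaEquiv} already packages the comparison between the two sites and proposition~\ref{prop:RepCrit4et} supplies the gluing.
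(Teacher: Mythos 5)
Your proof is correct and follows essentially the same route as the paper: the forward direction covers $X$ by fet objects via proposition~\ref{prop:IfLocHensEtcovbyFet}, and the converse transports the union of representable subobjects back through the equivalence $\beta$ of corollary~\ref{cor:BetaEquiv} and concludes with proposition~\ref{prop:RepCrit4et}. The only cosmetic difference is that the paper carries the representing sections $b_{i}\in B(X_{i})$ explicitly across the two sites, whereas you invoke the abstract fact that an equivalence identifies subobject lattices and preserves joins; both come to the same thing.
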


\begin{proof}
Suppose $B=\Hom_{S_{et}}(-,X)$ for some $X\in S_{et}$. Let $X=\cup X_{i}$
be a Zariski covering of $X$ by objects $X_{i}\in S_{fet}$ and set
$B_{i}=\Hom_{S_{et}}(-,X_{i})$, a subsheaf of $B$. Then $B=\cup B_{i}$
in $\Shv(S_{et})$, so $\beta(B)=\cup\beta(B_{i})$ in $\Shv(S_{fet})$,
with $\beta(B_{i})\in\Shv(S_{fet})$ representable by $X_{i}\in S_{fet}$.
Suppose conversely that $B'=\beta(B)$ is a union of representable
subpresheaves: there is a collection of objects $X_{i}\in S_{fet}$
and sections $b_{i}\in B'(X_{i})$ inducing monomorphisms $(-)^{\ast}b_{i}:\Hom_{S_{fet}}(-,X_{i})\hookrightarrow B'$
with image $B_{i}^{\prime}\subset B'$ such that $B'=\cup B_{i}^{\prime}$
in $\Shv(S_{fet})$. Then the sections $b_{i}\in B(X_{i})$ induce
monomorphisms $(-)^{\ast}b_{i}:\Hom_{S_{et}}(-,X_{i})\hookrightarrow B$
with image $B_{i}\subset B$ such that $B=\cup B_{i}$ in $\Shv(S_{et})$.
Note that $b_{i}\in B_{i}(X_{i})$ and $(X_{i},b_{i})$ represents
$B_{i}$. Then by proposition~\ref{prop:RepCrit4et}, $B$ is representable
by a pair $(X,b)$, $X\in S_{et}$, $b\in B(X)$, with $B_{i}\subset B$
representable by an open embedding $X_{i}\hookrightarrow X$, with
$X=\cup X_{i}$ and $b\vert_{X_{i}}=b_{i}$.
\end{proof}
\begin{prop}
\label{prop:IntegralOverLocHensIsHomeo}Let $f:X\rightarrow S$ be
an integral morphism with $X$ irreducible and $S$ locally henselian.
Then $f(X)$ is closed and $f:X\rightarrow f(X)$ is an homeomorphism. 
\end{prop}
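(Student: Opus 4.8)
The plan is to separate the statement into a soft topological part and one genuinely local arithmetic input. First I would recall that an integral morphism is affine and universally closed, so in particular $f$ is a closed continuous map; hence $f(X)$, being the image of the closed subset $X\subseteq X$, is closed in $S$, and $f\colon X\to f(X)$ is a closed continuous surjection. Since a closed continuous bijection is automatically a homeomorphism, the whole statement reduces to showing that $f$ is injective, i.e.\ that every fibre $f^{-1}(s)$ with $s\in f(X)$ consists of at most one point.

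To analyse a single fibre I would localise the base. Fix $s\in f(X)$ and put $A=\mathcal{O}_{S,s}$, which is henselian by hypothesis, with closed point $s$. Let $X_{A}=X\times_{S}\Spec(A)$, so that $f_{A}\colon X_{A}\to\Spec(A)$ is again integral and its fibre over $s$ is exactly $f^{-1}(s)$. The point is that $X_{A}$ is connected. Indeed, let $\xi$ be the generic point of the irreducible scheme $X$ and $\eta=f(\xi)$; since $f$ is closed and continuous one has $f(X)=\overline{\{\eta\}}$, so $s\in\overline{\{\eta\}}$, i.e.\ $\eta$ generises $s$ and therefore lies in $\Spec(A)$. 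As $\Spec(A)\to S$ is the filtered limit of the open immersions $U\hookrightarrow S$ with $s\in U$, base change identifies $X_{A}$ with the subspace of $X$ consisting of the points mapping to a generisation of $s$; this subspace contains $\xi$, and the closure of $\xi$ in it is all of $X_{A}$, so $X_{A}$ is irreducible, hence connected.

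The heart of the argument, and the step I expect to be the main obstacle, is the following purely local claim: if $A$ is henselian local with closed point $s$ and $Y=\Spec(B)\to\Spec(A)$ is integral with $Y$ connected, then the closed fibre $Y_{s}$ is a single point. I would prove it through idempotents. Writing $B$ as the filtered colimit of its finite $A$-subalgebras $B_{\lambda}$, each $B_{\lambda}$ is a finite product of henselian local rings because $A$ is henselian, so idempotents lift from $B_{\lambda}/\mathfrak{m}B_{\lambda}$ to $B_{\lambda}$, and an idempotent of $B_{\lambda}$ lying in $\mathfrak{m}B_{\lambda}$ (which is contained in the Jacobson radical) must vanish; passing to the colimit yields a bijection $\Idem(B)\simeq\Idem(B/\mathfrak{m}B)$. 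Consequently $Y=\Spec(B)$ is connected if and only if the closed fibre $Y_{s}=\Spec(B/\mathfrak{m}B)$ is. But $B/\mathfrak{m}B$ is integral over the field $k(s)$, so $Y_{s}$ is a profinite space (quasi-compact, Hausdorff, totally disconnected); being connected it must be a single point.

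Applying this claim to $Y=X_{A}$ shows that $f^{-1}(s)=(X_{A})_{s}$ is a single point. Since $s\in f(X)$ was arbitrary, $f$ is injective, and combined with the first paragraph this shows that $f\colon X\to f(X)$ is a homeomorphism. The only external inputs are the universal closedness of integral morphisms and the lifting of idempotents over henselian local rings; everything else is formal.
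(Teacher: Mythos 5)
Your proof is correct and follows essentially the same route as the paper's: reduce to injectivity via universal closedness, base change to the henselian local ring, deduce connectedness of the closed fibre from lifting of idempotents, and conclude because that fibre is totally disconnected. The only differences are that you prove the idempotent-lifting step by hand (via finite subalgebras and the structure of finite algebras over a henselian local ring) where the paper cites the characterization of henselian pairs, and that you spell out why irreducibility survives the base change, which the paper leaves implicit.
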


\begin{proof}
Since $f$ is universally closed \cite[\href{https://stacks.math.columbia.edu/tag/01WM}{Tag 01WM}]{SP},
we just have to show that it is injective. Fix $s\in f(S)$. Our assumptions
on $f$ are stable under base change to $\Spec(\mathcal{O}_{S,s})$,
so we may assume that $S$ is local henselian with closed point $s$.
By~\cite[\href{https://stacks.math.columbia.edu/tag/09XI}{Tag 09XI}]{SP},
any idempotent of $\Gamma(X_{s},\mathcal{O}_{X_{s}})$ lifts to an
idempotent of $\Gamma(X,\mathcal{O}_{X})$. Since $X$ is connected,
it follows that $X_{s}$ is connected. But $X_{s}$ is also totally
disconnected by~\cite[\href{https://stacks.math.columbia.edu/tag/00GS}{Tag 00GS} and \href{https://stacks.math.columbia.edu/tag/04MG}{Tag 04MG}]{SP},
so it must be a single point.
\end{proof}

\section{From fet-sheaves to Zariski Fet-sheaves}

Let $S_{Zar}$ be the usual Zariski site of $S$. For $U\in S_{Zar}$,
let $\Fet_{U}$ be the category of finite etale $U$-schemes, which
we view as a strictly full subcategory of $S_{fet}$ and equip with
the induced topology. Since $\Fet_{U}$ is stable under fiber products,
coverings in $\Fet_{U}$ still correspond to jointly surjective families
of morphisms, and for $V\subset U$, the base change functor $\Fet_{U}\rightarrow\Fet_{V}$
is continuous. Plainly $S_{fet}=\cup_{U}\Fet_{U}$. For $V=\emptyset$,
$\Fet_{\emptyset}$ is the initial object $\emptyset$ of $S_{fet}$,
$\Shv(\Fet_{\emptyset})$ is the punctual category with a single sheaf
$B_{\emptyset}$ whose direct image under $\emptyset\rightarrow U$
is the final object of $\Shv(\Fet_{U})$. 
\begin{defn}
A Zariski $\Fet$-sheaf is given by the following data:
\begin{enumerate}
\item For $U\in S_{Zar}$, a sheaf $B_{U}\in\Shv(\Fet_{U})$,
\item For $j_{V}^{U}:V\hookrightarrow U$ in $S_{Zar}$, a morphism $r_{U}^{V}:B_{U}\rightarrow(j_{V}^{U})_{\ast}(B_{V})$
in $\Shv(\Fet_{U})$.
\end{enumerate}
Here $(j_{V}^{U})_{\ast}(B_{V})(X)=B_{V}(X_{V})$ for $X\in\Fet_{U}$.
These are required to satisfy
\begin{enumerate}
\item the cocycle relation $r_{U}^{W}=(j_{V}^{U})_{\ast}(r_{V}^{W})\circ r_{U}^{V}$
for $W\subset V\subset U$,
\item the sheaf-like condition that for any covering $U=\cup U_{i}$ in
$S_{Zar}$, 
\[
B_{U}=\ker\left(\prod_{i}(j_{U_{i}}^{U})_{\ast}B_{U_{i}}\Rightarrow\prod_{i,j}(j_{U_{i,j}}^{U})_{\ast}B_{U_{i,j}}\right)
\]
in $\Shv(\Fet_{U})$, where $U_{i,j}=U_{i}\cap U_{j}$ as usual. 
\end{enumerate}
A morphism from $((B_{U}),(r_{U}^{V}))$ to $((B_{U}^{\prime}),(r_{U}^{\prime V}))$
is given by a collection $b=(b_{U})_{U}$ of morphisms $b_{U}:B_{U}\rightarrow B_{U}^{\prime}$
in $\Shv(\Fet_{U})$, such that for any $V\subset U$, the diagram
\[
\xyC{4pc}\xymatrix{B_{U}\ar[r]^{b_{U}}\ar[d]_{r_{U}^{V}} & B_{U}^{\prime}\ar[d]^{r_{U}^{\prime V}}\\
(j_{V}^{U})_{\ast}(B_{V})\ar[r]^{(j_{V}^{U})_{\ast}(b_{V})} & (j_{V}^{U})_{\ast}(B_{V}^{\prime})
}
\]
is commutative in $\Shv(\Fet_{U})$. We denote by $\Shv_{\Fet}(S_{Zar})$
the category thus defined. 
\end{defn}

\begin{rem}
We may also describe objects of $\Shv_{\Fet}(S_{Zar})$ as pairs $((B_{U}),(\tilde{r}_{U}^{V}))$
where for $j_{V}^{U}:V\hookrightarrow U$ in $S_{Zar}$, $\tilde{r}_{U}^{V}:(j_{V}^{U})^{\ast}B_{U}\rightarrow B_{V}$
is a morphism in $\Shv(\Fet_{V})$. The cocycle relation becomes $\tilde{r}_{U}^{W}=\tilde{r}_{V}^{W}\circ(j_{W}^{V})^{\ast}(\tilde{r}_{U}^{V})$,
but the formulation of the sheaf-like condition requires passing back
to the adjoint morphisms $B_{U}\rightarrow(j_{V}^{U})_{\ast}B_{V}$.
\end{rem}

\begin{prop}
There is an equivalence of categories 
\[
\gamma_{1}:\Shv(S_{fet})\rightarrow\Shv_{\Fet}(S_{Zar}),\qquad B\mapsto((B_{U}),(r_{U}^{V}))
\]
 where $B_{U}$ is the restriction of $B$ to $\Fet_{U}$ and for
$V\subset U$ and $X\in\Fet_{U}$, 
\[
(r_{U}^{V})_{X}:B_{U}(X)=B(X)\stackrel{\mathrm{res}}{\mathrm{\longrightarrow}}B(X_{V})=B_{V}(X_{V})=(j_{V}^{U})_{\ast}(B_{V})(X).
\]
\end{prop}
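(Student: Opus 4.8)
The plan is to exhibit an explicit quasi-inverse $\delta_{1}$ and verify the triangle identities; the only non-formal ingredient will be checking that the object produced by $\delta_{1}$ is a sheaf on $S_{fet}$. First I would confirm that $\gamma_{1}$ really lands in $\Shv_{\Fet}(S_{Zar})$: the cocycle relation $r_{U}^{W}=(j_{V}^{U})_{\ast}(r_{V}^{W})\circ r_{U}^{V}$ is the transitivity of restriction in $B$, and for the sheaf-like condition, given $X\in\Fet_{U}$ and a Zariski covering $U=\cup U_{i}$, the family $\{X\times_{U}U_{i}\to X\}$ is a covering in $S_{fet}$ with $X\times_{U}U_{i}\in\Fet_{U_{i}}$ and $(X\times_{U}U_{i})\times_{X}(X\times_{U}U_{j})=X\times_{U}U_{i,j}$, so the sheaf axiom for $B$ on this covering is verbatim the asserted equaliser evaluated at $X$.

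Next I would build $\delta_{1}$. Every $X\in S_{fet}$ is finite etale and surjective onto its image $f(X)$, an open of $S$, so $X\in\Fet_{f(X)}$; set $\delta_{1}((B_{U}),(r_{U}^{V}))(X):=B_{f(X)}(X)$. This is independent of the ambient fibre: if $X\in\Fet_{U}$ then $f(X)$ is clopen in $U$, and the sheaf-like condition for the covering $U=f(X)\sqcup(U\smallsetminus f(X))$ identifies $B_{U}(X)$ with $B_{f(X)}(X)$, the complementary factor $B_{U\smallsetminus f(X)}(\emptyset)$ being a point. A morphism $g:X\to Y$ in $S_{fet}$ satisfies $f(X)\subset f(Y)$ and factors through $Y\times_{f(Y)}f(X)$; the induced restriction is the composite of $r_{f(Y)}^{f(X)}$ at $Y$ with the finite-etale restriction along $g$, and functoriality follows from the cocycle relation.

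The heart of the argument is that $B:=\delta_{1}((B_{U}),(r_{U}^{V}))$ is a sheaf on $S_{fet}$. Given a covering $\{X_{i}\to X\}$ in $S_{fet}$, I would factor each map as $X_{i}\twoheadrightarrow Y_{i}\hookrightarrow X$ with $X_{i}\to Y_{i}$ finite etale surjective and $Y_{i}\hookrightarrow X$ an open immersion; by the discussion following the definition of fet morphisms $Y_{i}\in S_{fet}$, and the $Y_{i}$ cover $X$. Since the sheaf condition for a composite covering follows from the sheaf conditions for the two stages, this reduces the claim to: (a) finite etale coverings $\{X_{i}\to Y_{i}\}$ over a common base $Y_{i}\in\Fet_{f(Y_{i})}$, handled because $B$ restricted to $\Fet_{f(Y_{i})}$ is the sheaf $B_{f(Y_{i})}$; and (b) coverings of $X\in\Fet_{U}$ by fet-opens $\{Y_{i}\hookrightarrow X\}$. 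In case (b) the images $W_{i}=f(Y_{i})$ form a Zariski covering $U=\cup W_{i}$ and $Y_{i}$ is a clopen subscheme of $X\times_{U}W_{i}$. The sheaf-like condition gives descent for the pulled-back covering $\{X\times_{U}W_{i}\to X\}$, and one promotes this to descent for the finer $\{Y_{i}\to X\}$ by splitting each $B_{W_{i}}(X\times_{U}W_{i})$ along the clopen decomposition $X\times_{U}W_{i}=Y_{i}\sqcup Y_{i}'$ and matching the two equalisers on overlaps. Reconciling the base covering $U=\cup W_{i}$ with the strictly finer covering of $X$ by the $Y_{i}$, which are not pulled back from the base, is the step I expect to be the main obstacle.

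Finally I would record the triangle identities. For $B\in\Shv(S_{fet})$ one has $\delta_{1}\gamma_{1}(B)(X)=B_{f(X)}(X)=B(X)$ compatibly with restrictions, so $\delta_{1}\gamma_{1}\cong\mathrm{id}$; and $\gamma_{1}\delta_{1}\cong\mathrm{id}$ because restricting $B=\delta_{1}(\cdots)$ to each $\Fet_{U}$ recovers $B_{U}$ together with the maps $r_{U}^{V}$. Equivalently one may argue that $\gamma_{1}$ is fully faithful, a morphism of $S_{fet}$-sheaves being the same as a family $(b_{U})$ compatible with the $r_{U}^{V}$ since $S_{fet}=\cup_{U}\Fet_{U}$ and each object lies in the fibre over its image, and essentially surjective by the construction of $\delta_{1}$.
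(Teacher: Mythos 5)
Your overall architecture (explicit quasi-inverse $\delta_{1}$ with $\delta_{1}(\cdots)(X)=B_{f(X)}(X)$, the clopen splitting identifying $B_{U}(X)$ with $B_{f(X)}(X)$, full faithfulness from the fact that every object and every morphism of $S_{fet}$ lives over the image open) matches the paper's proof, and those parts are fine. The genuine gap is exactly where you flag it: case (b), coverings of $X\in\Fet_{U}$ by fet-opens $Y_{i}\hookrightarrow X$ whose images $W_{i}$ in $S$ vary. Your proposed fix --- split $B_{W_{i}}(X\times_{U}W_{i})$ along $X\times_{U}W_{i}=Y_{i}\sqcup Y_{i}'$ and ``match the two equalisers'' --- is circular as stated: to produce the missing component over the complementary clopen $Y_{i}'$ you must glue the given sections along the covering $\{Y_{j}\cap Y_{i}'\to Y_{i}'\}_{j}$, which is again a covering by opens finite etale over varying opens of $S$, i.e.\ a covering of exactly the type you are trying to handle. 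Without a terminating induction this does not close.

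The paper supplies the missing ingredient as Lemma~\ref{lem:FETCoversRefined}: every covering in $S_{fet}$ admits a refinement of the form $\{X_{j,k}\to X_{U_{j}}\hookrightarrow X\}$ with $U=\cup U_{j}$ a Zariski covering and each $\{X_{j,k}\to X_{U_{j}}\}$ a \emph{finite} covering inside the single category $\Fet_{U_{j}}$; this is proved by induction on a bound for the degree of $X\to U$, peeling off the clopen image of each $X_{i}\to X_{S_{i}}$ and its clopen complement. That reduces the sheaf axiom to the two cases your plan also isolates, but with the crucial difference that the ``open'' part of the refinement is pulled back from a Zariski covering of the base, so the sheaf-like condition on $((B_{U}),(r_{U}^{V}))$ applies verbatim. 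To repair your case (b) you would need an argument of comparable substance --- for instance, using that $X\to U$ is finite (hence closed) to find, around each $s\in U$, an open $V\subset\bigcap_{k}W_{i_{k}}$ with $X_{V}$ covered by finitely many $Y_{i_{k}}\cap X_{V}$, each clopen in $X_{V}$ and hence lying in $\Fet_{V}$ --- together with the standard d\'evissage for composite/refined coverings (which itself requires the sheaf condition on the induced coverings of the fiber products, not just ``the two stages''). As written, the proposal does not prove essential surjectivity.
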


\begin{proof}
Any $X$ in $S_{fet}$ belongs to $\Fet_{U}$ where $U$ is the image
of $X$ in $S$, so \textbf{$\gamma_{1}$ is faithful}: for a morphism
$b:B\rightarrow B'$ in $\Shv(S_{fet})$, $b_{X}:B(X)\rightarrow B'(X)$
equals $b_{U,X}:B_{U}(X)\rightarrow B_{U}^{\prime}(X)$. A morphism
$f:Y\rightarrow X$ in $S_{fet}$ factors as $Y\rightarrow X_{V}\hookrightarrow X$
where $V\subset U$ is the image of $Y$ in $S$ and $f':Y\rightarrow X_{V}$
is a morphism in $\Fet_{V}$, so\textbf{ $\gamma_{1}$ is full}: for
any morphism $(b_{U}):\gamma_{1}(B)\rightarrow\gamma_{1}(B')$, the
formula $b_{X}=b_{U,X}$ defines a morphism of presheaves $b:B\rightarrow B'$
since in the commutative diagram
\[
\xyC{3pc}\xymatrix{B(X)\ar[d]_{b_{X}}\ar@{=}[r] & B_{U}(X)\ar[d]_{b_{U,X}}\ar[r]^{(r_{U}^{V})_{X}} & B_{V}(X_{V})\ar[d]^{b_{V,X_{V}}}\ar[r]^{\mathrm{res}_{f'}} & B_{V}(Y)\ar[d]^{b_{V,Y}}\ar@{=}[r] & B(Y)\ar[d]^{b_{Y}}\\
B'(X)\ar@{=}[r] & B_{U}^{\prime}(X)\ar[r]^{(r_{U}^{\prime V})_{X}} & B_{V}^{\prime}(X_{V})\ar[r]^{\mathrm{res}_{f'}} & B_{V}^{\prime}(Y)\ar@{=}[r] & B'(Y)
}
\]
the compositions of the horizontal maps are the restrictions along
$f:Y\rightarrow X$ on $B$ and $B'$; plainly, $\gamma_{1}(b)=(b_{U})$.
Finally, we will see in lemma~\ref{lem:FETCoversRefined} below that
any covering $\{X_{i}\rightarrow X\}$ in $S_{fet}$ has a refinement
$\{X_{j,k}^{\prime}\rightarrow X_{U_{j}}\rightarrow X\}$ where $U=\cup U_{j}$
is a Zariski covering and for each index $j$, $\{X_{j,k}^{\prime}\rightarrow X_{U_{j}}\}$
is a covering in the full subcategory $\Fet_{U_{j}}$. It follows
that \textbf{$\gamma_{1}$ is essentially surjective}. 

Indeed for $((B_{U}),(r_{U}^{V}))$ in $\Shv_{\Fet}(S_{Zar})$, we
may set $B(X)=B_{U}(X)$ and for $f:Y\rightarrow X$, define $\res_{f}:B(X)\rightarrow B(Y)$
by the first line of the above diagram. If $g:Z\rightarrow Y$ is
another morphism in $S_{fet}$, $W\subset V$ the image of $Z$ and
$g':Z\rightarrow Y_{W}$ the induced morphism, the definition of $\Shv_{\Fet}(S_{Zar})$
yields a commutative diagram
\[
\xyC{2pc}\xymatrix{B(X)\ar@{=}[rd]\ar[rrrr]^{\mathrm{res}_{f}} &  &  &  & B(Y)\ar[dddd]^{\mathrm{res}_{g}}\\
 & B_{U}(X)\ar[rd]_{(r_{U}^{W})_{X}}\ar[r]^{(r_{U}^{V})_{X}} & B_{V}(X_{V})\ar[d]^{(r_{V}^{W})_{X_{V}}}\ar[r]^{\mathrm{res}_{f'}} & B_{V}(Y)\ar[d]^{(r_{V}^{W})_{Y}}\ar@{=}[ur]\\
 &  & B_{W}(X_{W})\ar[rd]_{\mathrm{res}_{(f\circ g)^{\prime}}}\ar[r]^{\mathrm{res}_{f'}} & B_{W}(Y_{W})\ar[d]^{\mathrm{res}_{g'}}\\
 &  &  & B_{W}(Z)\ar@{=}[rd]\\
 &  &  &  & B(Z)
}
\]
 whose outer triangle gives $\mathrm{res}_{f\circ g}=\mathrm{res}_{g}\circ\mathrm{res}_{f}$.
We have thus defined a presheaf $B$ on $S_{fet}$, and it remains
to establish that it satisfies the sheaf property with respect to
$(1)$ horizontal coverings $X=\cup X_{U_{i}}$, $U=\cup U_{i}$,
and $(2)$ vertical coverings $\{f_{j}:X_{j}\rightarrow X\}$ entirely
occurring in $\Fet_{U}$. For $(1)$, this follows from the sheaf-like
condition that we have imposed on $((B_{U}),(r_{V}^{U}))$. For $(2)$,
we have to show that 
\[
B(X)\stackrel{?}{=}\ker\left(\prod_{i}B(X_{i})\Rightarrow\prod_{i,j}B(X_{i}\times_{X}X_{j})\right)\quad\text{in}\quad\Set.
\]
Unwinding the definitions, we have to show that 
\[
B_{U}(X)\stackrel{?}{=}\ker\left(\prod_{i}B_{U_{i}}(X_{i})\Rightarrow\prod_{i,j}B_{U_{i,j}^{\prime}}(X_{i}\times_{X}X_{j})\right)\quad\text{in}\quad\Set
\]
where $U_{i}\subset U$ is the image of $X_{i}$ and $U_{i,j}^{\prime}\subset U_{i,j}$
is the image of $X_{i}\times_{X}X_{j}$. Since all schemes in sight
are finite etale over $U$, any $V\in\{U_{i},U_{i,j}^{\prime}\}$
is actually clopen in $U$. Our sheaf-like condition for the resulting
Zariski covering $U=V\coprod V'$ shows that $(r_{U}^{V},r_{U}^{V'})$
induces an isomorphism $B_{U}\simeq(j_{V}^{U})_{\ast}(B_{V})\times(j_{V'}^{U})_{\ast}(B_{V'})$.
Thus
\[
\begin{array}{lrrll}
 & (r_{U}^{U_{i}})_{X_{i}}: & B_{U}(X_{i}) & \rightarrow & B_{U_{i}}(X_{i})\\
\text{and} & (r_{U}^{U_{i,j}^{\prime}})_{X_{i}\times_{X}X_{j}}: & B_{U}(X_{i}\times_{X}X_{j}) & \rightarrow & B_{U_{i,j}^{\prime}}(X_{i}\times_{X}X_{j})
\end{array}
\]
are bijections. The desired equality now becomes the sheaf condition
\[
B_{U}(X)=\ker\left(\prod_{i}B_{U}(X_{i})\Rightarrow\prod_{i,j}B_{U}(X_{i}\times_{X}X_{j})\right)\quad\text{in}\quad\Set
\]
for the sheaf $B_{U}\in\Shv(\Fet_{U})$ with respect to the covering
$\{X_{i}\rightarrow X\}$. So we have constructed a $B\in\Shv(S_{fet})$.
For any open $U$ of $S$ and $Y\in\Fet_{U}$ with image $V\subset U$,
we have $B\vert_{\Fet_{U}}(Y)=B(Y)=B_{V}(Y)$ by definition of $B$,
and we define 
\[
\left(\xymatrix{B_{U}(Y)\ar[r]^{b_{U,Y}} & B\vert_{\Fet_{U}}(Y)}
\right)=\left(\xymatrix{B_{U}(Y)\ar[r]^{(r_{U}^{V})_{Y}} & B_{V}(Y)}
\right).
\]
We have just seen that, as $V$ is clopen in $U$, $b_{U,Y}$ is a
bijection. One checks that $(b_{U,Y})_{Y}$ defines an isomorphism
$b_{U}:B_{U}\rightarrow B\vert_{\Fet_{U}}$ in $\Shv(\Fet_{U})$,
and that $(b_{U})_{U}$ defines an isomorphism $((B_{U}),(r_{U}^{V}))\rightarrow\gamma_{1}(B)$.
So $\gamma_{1}$ is essentially surjective. 
\end{proof}
\begin{lem}
\label{lem:FETCoversRefined}Any covering $\{X_{i}\rightarrow X\}$
in $S_{fet}$ has a refinement of the form 
\[
\{X_{j,k}\rightarrow X_{U_{j}}\hookrightarrow X\}
\]
 where $X_{j,k}$ is open in some $X_{i}$, $U=\cup_{j}U_{j}$ is
a Zariski covering of the image $U$ of $X$ in $S$ and for each
index $j$, $\{X_{j,k}\rightarrow X_{U_{j}}\}$ is a finite covering
in $\Fet_{U_{j}}$. 
\end{lem}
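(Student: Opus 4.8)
The plan is to peel the given covering apart into an ordinary open covering of $X$ whose members carry finite etale structures, and then to transport the combinatorics downstairs to a genuine Zariski covering of the image $U$. First I would use that $X$ is separated etale over $S$ (fet morphisms being separated etale): then every structure map $f_i:X_i\to X$ of the covering is itself fet, so it factors as $X_i\to V_i\hookrightarrow X$ with $V_i=f_i(X_i)$ open in $X$ and $X_i\to V_i$ finite etale, and moreover $V_i$ is fet over $S$, say finite etale over an open $W_i\subseteq U$ (that $W_i\subseteq U$ because $V_i\subseteq X$ maps into the image $U$ of $X$). Joint surjectivity of the covering gives $X=\bigcup_i V_i$.

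Next I would localize on $U$ one point at a time. Fix $u\in U$; the fiber $X_u$ is finite, say with points $x_1,\dots,x_n$, and each $x_\ell$ lies in some $V_{i_\ell}$. Put $Z=X\smallsetminus\bigcup_\ell V_{i_\ell}$. Since $f:X\to U$ is finite, $f(Z)$ is closed, and $u\notin f(Z)$; hence over $U_1:=U\smallsetminus f(Z)$ one has $X_{U_1}\subseteq\bigcup_\ell V_{i_\ell}$. Shrinking further to $U_2:=U_1\cap\bigcap_\ell W_{i_\ell}$, still a neighbourhood of $u$ since $u\in W_{i_\ell}$, each intersection $C_\ell:=V_{i_\ell}\cap X_{U_2}$ becomes, because $V_{i_\ell}\to U$ factors through $W_{i_\ell}\supseteq U_2$, the base change $V_{i_\ell}\times_{W_{i_\ell}}U_2$, hence finite etale over $U_2$, while remaining open in $X_{U_2}$. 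As $C_\ell\to U_2$ is finite (hence proper) and $X_{U_2}\to U_2$ is separated, the open immersion $C_\ell\hookrightarrow X_{U_2}$ is proper, so $C_\ell$ is even clopen in $X_{U_2}$; in any case $X_{U_2}=\bigcup_\ell C_\ell$ exhibits $\{C_\ell\hookrightarrow X_{U_2}\}_\ell$ as a covering in $\Fet_{U_2}$.

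It remains to replace the $C_\ell$ by pieces open in the $X_i$. Pulling back the finite etale surjection $X_{i_\ell}\to V_{i_\ell}$ along $C_\ell\hookrightarrow V_{i_\ell}$, I set $X_{U_2,\ell}:=f_{i_\ell}^{-1}(C_\ell)$, which is open in $X_{i_\ell}$, finite etale over $C_\ell$ and hence over $U_2$, and surjects onto $C_\ell$. Thus $\{X_{U_2,\ell}\to X_{U_2}\}_\ell$ is a finite covering in $\Fet_{U_2}$, each $X_{U_2,\ell}$ is open in $X_{i_\ell}$, and the composite $X_{U_2,\ell}\to X_{U_2}\hookrightarrow X$ equals $X_{U_2,\ell}\hookrightarrow X_{i_\ell}\to X$ (the restriction of $f_{i_\ell}$), so the refinement factors through the original covering. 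Letting $u$ vary and writing $U_j$ for the neighbourhoods $U_2$ so produced gives a Zariski covering $U=\bigcup_j U_j$, whence $X=\bigcup_j X_{U_j}$, and the required refinement $\{X_{U_j,\ell}\to X_{U_j}\hookrightarrow X\}$.

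The main obstacle is conceptual rather than computational: one cannot in general separate the individual points of a fibre $X_u$ by a Zariski localization on $U$, since conjugate sheets of a connected finite etale cover stay glued (think of $x\mapsto x^{2}$, where no Zariski shrinking of the base splits the two sheets). So any attempt to cut $X_{U_j}$ into one‑point‑fibre pieces is doomed. The construction sidesteps this by never separating points: it uses only the given fet opens $V_i$, and it is precisely their finite etale structure over the $W_i$ that forces the intersections $C_\ell$ to be \emph{finite} — and therefore clopen — over a small enough $U_j$. Establishing that $C_\ell$ is finite etale over $U_2$, and not merely etale and quasi‑finite, is the crux, and it rests squarely on the fet hypothesis built into the covering.
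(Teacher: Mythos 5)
Your proof is correct, but it follows a genuinely different route from the paper's. The paper first reduces to the case where $\pi:X\to U$ is surjective of degree bounded by $n$ and then runs an induction on $n$: for each $i$ it splits $X_{S_i}$ into the clopen image $Y_i$ of $X_i$ and a complement $Z_i$ whose degree over its (clopen) image in $S_i$ drops to $n-1$, and it assembles the Zariski covering of $U$ from these clopen decompositions together with the inductively refined coverings of the $Z_i$. You instead argue locally at each point $u$ of $U$: finiteness of the fibre $X_u$ lets you choose finitely many of the fet images $V_{i_\ell}=f_{i_\ell}(X_{i_\ell})$ covering $X_u$, closedness of the finite map $X\to U$ gives a neighbourhood $U_1$ of $u$ with $X_{U_1}\subseteq\bigcup_\ell V_{i_\ell}$, and shrinking into $\bigcap_\ell W_{i_\ell}$ makes each $V_{i_\ell}\cap X_{U_2}=V_{i_\ell}\times_{W_{i_\ell}}U_2$ finite etale over $U_2$, after which pulling back along the finite etale surjections $X_{i_\ell}\to V_{i_\ell}$ produces the required opens of the $X_i$. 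All the individual steps check out (in particular the identification of $V_{i_\ell}\cap X_{U_2}$ with the base change, which is the crux, and the fact that the resulting family is a finite jointly surjective family of finite etale $U_2$-schemes over $X_{U_2}$); the clopen-ness of the $C_\ell$ is a true but unnecessary aside. Your argument is more elementary in that it avoids the induction on the degree entirely, at the cost of being a pointwise argument on the base rather than a global decomposition; the paper's induction, by contrast, keeps track of the covering through explicit clopen splittings and never invokes the closed-image argument. Either proof serves the later application (essential surjectivity of $\gamma_1$) equally well, since both deliver refinements whose members are open in the original $X_i$'s.
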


\begin{proof}
Let $\{f_{i}:X_{i}\rightarrow X\}$ be a covering in $S_{fet}$. Let
$\pi:X\rightarrow S$ be the structural morphism and let $U$ be its
image. Since $\pi:X\rightarrow U$ is finite etale, there is an open
partition $U=\coprod_{n\geq1}U_{n}$ such that $X_{n}=\pi^{-1}(U_{n})$
is finite flat of rank $n$ over $U_{n}$. Set $X_{n,i}=\pi_{i}^{-1}(U_{n})$,
where $\pi_{i}:X_{i}\rightarrow S$ is the structural morphism. So
$\{X_{n,i}\rightarrow X_{n}\}$ is a covering in $U_{n,fet}$. If
$\{X_{n,j,k}\rightarrow X_{n,U_{n,j}}\hookrightarrow X_{n}\}$ is
a refinement for $\{X_{n,i}\rightarrow X_{n}\}$ as desired, then
so is $\{X_{n,j,k}\rightarrow X_{U_{n,j}}\hookrightarrow X\}$ for
$\{X_{i}\rightarrow X\}$. We may thus assume that $\pi$ is surjective
of constant degree $n\geq1$. We instead assume that $\pi$ is surjective
of degree \emph{bounded} by $n\geq1$ and argue by induction on $n$.
If $n=1$, then $\pi$ is an isomorphism and the covering $\{X_{i}\twoheadrightarrow f_{i}(X_{i})\hookrightarrow X\}$
is already of the desired form. In general, let $S_{i}$ be the image
of the structural morphism $\pi_{i}:X_{i}\rightarrow S$, so that
$f_{i}$ factors as $X_{i}\rightarrow X_{S_{i}}\hookrightarrow X$,
where $f_{i}^{\prime}:X_{i}\rightarrow X_{S_{i}}$ is a morphism between
finite etale $S_{i}$-schemes, hence itself finite etale. The image
$Y_{i}$ of $f_{i}^{\prime}$ is clopen in $X_{S_{i}}$, so $X_{S_{i}}=Y_{i}\coprod Z_{i}$
with $Y_{i}$ and $Z_{i}$ finite etale over $S_{i}$. The image $S_{i}^{\prime}$
of $Z_{i}$ in $S_{i}$ is clopen in $S_{i}$, so $S_{i}=S_{i}^{\prime}\coprod S_{i,\ast}$.
Since $Y_{i}\rightarrow S_{i}$ is surjective by construction, the
degree of the finite etale surjective morphism $Z_{i}\twoheadrightarrow S_{i}^{\prime}$
is bounded by $n-1$. By our induction hypothesis, each one of the
induced coverings $\{f_{i'}^{-1}(Z_{i})\rightarrow Z_{i}\}$ has a
refinement of the form $\{X_{i,j,k}\rightarrow Z_{i,S_{i,j}}\rightarrow Z_{i}\}$
where $S_{i}^{\prime}=\cup_{j}S_{i,j}$ is a Zariski covering and
$\{X_{i,j,k}\rightarrow Z_{i,S_{i,j}}\}$ is a finite covering in
$\Fet_{S_{i,j}}$, with $X_{i,j,k}$ open in some $f_{i'}^{-1}(Z_{i})\subset X_{i'}$.
For any fixed $i$, $\{S_{i,j}\}\cup\{S_{i,\ast}\}$ is a Zariski
covering of $S_{i}$, and since the $S_{i}$'s cover $S$, we obtain
a Zariski cover of $S$. Over $S_{i,j}\subset S_{i}^{\prime}\subset S_{i}$,
$X_{S_{i,j}}=Y_{i,S_{i,j}}\coprod Z_{i,S_{i,j}}$ has a finite covering
in $\Fet_{S_{i,j}}$ obtained by adjoining to $\{X_{i,j,k}\rightarrow Z_{i,S_{i,j}}\hookrightarrow X_{S_{i,j}}\}$
the single morphism $\{X_{i,S_{i,j}}\twoheadrightarrow Y_{i,S_{i,j}}\hookrightarrow X_{S_{i,j}}\}$.
Over $S_{i,\ast}\subset S_{i}$, $X_{S_{i,\ast}}=Y_{i,S_{i,\ast}}$
is covered by the single morphism $\{X_{i,S_{i,\ast}}\twoheadrightarrow X_{S_{i,\ast}}\}$.
This yields a refinement of $\{X_{i}\rightarrow X\}$ of the desired
form. 
\end{proof}

\section{From Zariski Fet-sheaves to Zariski $\pi$-sheaves}

We now assume that our base scheme $S$ is \emph{irreducible} with
generic point $\eta$, and pick a geometric point $\overline{\eta}$
of $S$ over $\eta$. 

For a nonempty open $U$ of $S$, $U$ is irreducible hence connected,
and the category of sheaves on $\Fet_{U}$ is equivalent to the category
of smooth $\pi(U)$-sets, where the fundamental group $\pi(U)$ is
the automorphism group $\pi(U,\overline{\eta})$ of the fiber functor
$\Fet_{U}\rightarrow\Set$ which takes $X$ to $X(\overline{\eta})$.
Here $\pi(U)$ is equipped with the coarsest topology for which the
actions of $\pi(U)$ on the finite discrete sets $X(\overline{\eta})$
are continuous, so $\pi(U)$ is a profinite group; and given a left
$\pi(U)$-set $Y$, we say that $y\in Y$ is smooth if its stabilizer
$\pi(U)_{y}$ is open in $\pi(U)$, we let $Y^{sm}$ be the set of
smooth points in $Y$, and we say that $Y$ is smooth if $Y^{sm}=Y$,
so that $Y\mapsto Y^{sm}$ is right adjoint to the inclusion of the
strictly full subcategory $\Set_{\pi(U)}^{sm}$ of smooth $\pi(U)$-sets
in the category $\Set_{\pi(U)}$ of all $\pi(U)$-sets. The fiber
functor induces equivalences
\[
\xyC{3pc}\xymatrix{\Fet_{U}\ar[r]^{(-)(\overline{\eta})}\ar@{_{(}->}[d]_{\mathrm{yon}} & \Set_{\pi(U)}^{fsm}\ar@{^{(}->}[d]^{\mathrm{inc}}\\
\Shv(\Fet_{U})\ar[r]^{(-)(\overline{\eta})} & \Set_{\pi(U)}^{sm}
}
\]
where $\Set_{\pi(U)}^{fsm}$ is the full subcategory of finite smooth
$\pi(U)$-sets in $\Set_{\pi(U)}^{sm}$, and the bottom functor takes
a sheaf $B_{U}\in\Shv(\Fet_{U})$ to the smooth $\pi(U)$-set
\begin{align*}
B_{U}(\overline{\eta}) & =\underrightarrow{\lim}_{(X,x)\in\Fet_{U}(\overline{\eta})}B_{U}(X)\\
 & =\underrightarrow{\lim}_{(X,x)\in\Fet_{U}^{c}(\overline{\eta})}B_{U}(X)
\end{align*}
Here $\Fet_{U}(\overline{\eta})$ is the category of pairs $(X,x)$
with $X\in\Fet_{U}$ and $x\in X(\overline{\eta})$, on which $\pi(U)$
acts by $g\cdot(X,x)=(X,g\cdot x)$, and $\Fet_{U}^{c}(\overline{\eta})$
is the $\pi(U)$-stable strictly full initial subcategory where $X$
is connected. 

For nonempty opens $V\subset U$, the base change functor $\Fet_{U}\rightarrow\Fet_{V}$
is compatible with the fiber functors. It induces a continuous morphism
$\pi(V)\rightarrow\pi(U)$, and the pull-back functor $(j_{V}^{U})^{\ast}:\Shv(\Fet_{U})\rightarrow\Shv(\Fet_{V})$
corresponds to the restriction functor $\mathrm{res}:\Set_{\pi(U)}^{sm}\rightarrow\Set_{\pi(V)}^{sm}$.
Accordingly, the data $((B_{U}),(\tilde{r}_{U}^{V}))$ which specifies
a Zariski $\Fet$-sheaf may now be viewed as a pair $(C,\rho)$, where
$C$ is a presheaf of sets on nonempty opens of $S$, with each $C(U)$
equipped with an action $\rho_{U}$ of $\pi(U)$, such that the restriction
maps $C(U)\rightarrow C(V)$ are equivariant with respect to $\pi(V)\rightarrow\pi(U)$.
We extend $C$ to all opens by $C(\emptyset)=\{\star\}$. The sheaf
condition on $((B_{U}),(r_{U}^{V}))$ unwinds to the following sheaf
condition on $C$: for any Zariski covering $U=\cup U_{i}$ of a nonempty
open $U$ of $S$ and any smooth $\pi(U)$-set $Y$, 
\[
\Hom_{\pi(U)}\left(Y,C(U)\right)=\ker\left(\prod_{i}\Hom_{\pi(U_{i})}\left(Y,C(U_{i})\right)\Rightarrow\prod_{i,j}\Hom_{\pi(U_{i,j})}\left(Y,C(U_{i,j})\right)\right)
\]
We denote by $\Shv_{\pi}(S_{Zar})$ the category of these \emph{Zariski
$\pi$-sheaves}, and let
\[
\gamma_{2}:\Shv_{\Fet}(S_{Zar})\rightarrow\Shv_{\pi}(S_{Zar})
\]
be the equivalence of categories just defined.
\begin{rem}
\label{rem:IrreducibilitySImpliesEtaleLocConnec}Irreducibility of
$S$ also implies that any $X$ in $S_{et}$ is locally connected.
Indeed $X\rightarrow S$ is open, hence generizing, so the minimal
points of $X$ belong to $X_{\eta}$. Since $X\rightarrow S$ is etale,
$X_{\eta}$ is discrete, so its points are the minimal points of $X$,
and they are locally finite in $X$. Therefore any quasi-compact open
$U$ of $X$ has finitely many irreducible components, thus also finitely
many connected components; being closed and disjoint, they must be
open in $U$, hence also in $X$. So any point of $X$ has a connected
open neighborhood, and the connected components of $X$ are open.
\end{rem}

\section{From Zariski $\pi$-sheaves to Zariski sheaves of $G$-sets}

With assumptions as above, suppose moreover that $S$ is \emph{geometrically
unibranch}. For $s\in S$, set $S(s)=\Spec(\mathcal{O}_{S,s})$, so
that $S(s)=\underleftarrow{\lim}V$ where $V$ runs through the affine
open neighborhoods of $s$ in $S$. For $U$ open in $S$ with $s\in U$,
pull-back along 
\[
\eta\hookrightarrow S(s)\rightarrow U
\]
and evaluation at $\overline{\eta}$ induce compatible fiber functors
\[
\Fet_{U}\rightarrow\Fet_{S(s)}\rightarrow\Fet_{\eta}\rightarrow\Set
\]
and the corresponding continuous morphisms between the fundamental
groups
\[
\pi(\eta)\rightarrow\pi(S(s))\rightarrow\pi(U).
\]
Our new assumption implies that the latter are surjective \cite[\href{https://stacks.math.columbia.edu/tag/0BQI}{Tag 0BQI}]{SP}.
The group $G=\pi(\eta)$ is the Galois group of the separable closure
of $k(\eta)$ in $k(\overline{\eta})$. We set
\[
I(s)=\ker\left(G\twoheadrightarrow\pi(S(s))\right)\quad\text{and}\quad I(U)=\ker\left(G\twoheadrightarrow\pi(U)\right).
\]
Note that $I(\eta)=\{1\}$, i.e.~$\pi(\eta)\simeq\pi(S(\eta))$ by
\cite[\href{https://stacks.math.columbia.edu/tag/0BQN}{Tag 0BQN}]{SP}.
\begin{prop}
\label{prop:LinkI(s)andI(U)}$(1)$ For any $s\in S$, $I(s)=\cap_{s\in U}I(U)$.
$(2)$ For any nonempty open $U$ of $S$, $I(U)$ is the closure
of the subgroup of $G$ generated by $\{I(s):s\in U\}$. 
\end{prop}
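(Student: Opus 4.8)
The plan is to transport everything to the profinite group $G=\pi(\eta,\overline{\eta})$ and exploit two structural facts recorded above. First, the homomorphisms $\pi(\eta)\twoheadrightarrow\pi(S(s))\twoheadrightarrow\pi(U)$ are surjective, so $I(s)$ and $I(U)$ are \emph{closed normal} subgroups of $G$, and for $s\in U$ the factorization $G\to\pi(S(s))\to\pi(U)$ gives $I(s)\subseteq I(U)$ (and $V\subseteq U$ gives $I(V)\subseteq I(U)$). Second, for any nonempty open $W$ the surjection $G\twoheadrightarrow\pi(W)$ makes the restriction $\Fet_{W}\hookrightarrow\Fet_{\eta}$, i.e.\ the inflation $\Set_{\pi(W)}^{sm}\hookrightarrow\Set_{G}^{sm}$, fully faithful.

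For (1), the affine open neighborhoods $V$ of $s$ are cofinal among all opens $U\ni s$, and $V\subseteq U$ forces $I(V)\subseteq I(U)$; hence $\cap_{s\in U}I(U)=\cap_{V}I(V)$. Now $S(s)=\varprojlim_{V}V$ is a cofiltered limit of connected schemes with affine transition maps, so finite etale covers of $S(s)$ descend to finite level, $\Fet_{S(s)}=\varinjlim_{V}\Fet_{V}$ compatibly with the fibre functors, and therefore $\pi(S(s))\xrightarrow{\sim}\varprojlim_{V}\pi(V)$ (the same continuity argument already used above for $\mathcal{O}_{S,\overline{s}}^{sh}$). Taking the kernel of $G\to\varprojlim_{V}\pi(V)$ gives $I(s)=\cap_{V}\ker(G\to\pi(V))=\cap_{V}I(V)=\cap_{s\in U}I(U)$.

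For (2), set $N=\overline{\langle I(s):s\in U\rangle}$. Each $I(s)\subseteq I(U)$ and $I(U)$ is closed, so $N\subseteq I(U)$; and since each $I(s)$ is normal, $N$ is a closed normal subgroup. For the reverse inclusion I compare the finite smooth $G$-sets on which $N$, resp.\ $I(U)$, acts trivially. Let $T\in\Set_{G}^{sm}$ be finite, corresponding to $X\in\Fet_{\eta}$. Then $N$ acts trivially on $T$ iff every $I(s)$ does, iff $X$ lies in the essential image of $\Fet_{S(s)}\hookrightarrow\Fet_{\eta}$ for each $s\in U$, iff (by $\Fet_{S(s)}=\varinjlim_{V}\Fet_{V}$) $X$ extends to a finite etale cover $X_{s}$ over some open $V_{s}$ with $s\in V_{s}\subseteq U$. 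When this holds the $V_{s}$ cover $U$; each overlap $V_{s}\cap V_{s'}$ is a nonempty --- hence connected --- open of the irreducible $S$, so full faithfulness of $\Fet_{V_{s}\cap V_{s'}}\hookrightarrow\Fet_{\eta}$ lifts $\mathrm{id}_{X}$ to a unique isomorphism between the restrictions of $X_{s}$ and $X_{s'}$, and uniqueness on triple overlaps gives the cocycle condition; by Zariski descent for finite etale morphisms the $X_{s}$ glue to some $X_{U}\in\Fet_{U}$, on which $I(U)$ acts trivially. Thus $N$ and $I(U)$ are killed by exactly the same finite smooth $G$-sets. For every open normal $K\trianglelefteq G$ the set $G/I(U)K$ is then killed by $N$, so $N\subseteq I(U)K$, and symmetrically $I(U)\subseteq NK$; intersecting over all $K$ and using that both subgroups are closed yields $N=I(U)$.

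The decisive step is the gluing in (2): passing from ``$X$ extends around each point of $U$'' to ``$X$ extends over $U$.'' This is precisely where the geometrically unibranch hypothesis is used --- surjectivity of $G\twoheadrightarrow\pi(W)$ is what renders $\Fet_{W}\hookrightarrow\Fet_{\eta}$ fully faithful, so that the local extensions are rigid and carry canonical, cocycle-compatible descent data (the connectedness of the overlaps, needed to apply full faithfulness, comes from irreducibility of $S$). The remaining ingredients --- continuity of $\pi_{1}$ along $S(s)=\varprojlim V$ and the recovery of a closed normal subgroup from the finite quotients it kills --- are routine.
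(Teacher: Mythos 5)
Your proof is correct and follows the paper's own strategy: part (1) is the same extension-of-finite-etale-covers argument (from $S(s)=\varprojlim V$ to some affine $V\ni s$), and the heart of part (2) --- extending a cover of $S(s)$ to a neighborhood $V_{s}$ and gluing over $U$ via the rigidity coming from surjectivity of $\pi(\eta)\twoheadrightarrow\pi(V_{s}\cap V_{s'})$ on the connected overlaps --- is exactly the paper's construction. The only cosmetic difference is the endgame of (2): the paper glues the pointed connected covers with fiber $G/\Omega$ for each open subgroup $\Omega\supset I'(U)$ and concludes by writing the closed subgroup $I'(U)$ as the intersection of the open subgroups containing it, whereas you glue arbitrary finite smooth $G$-sets killed by all the $I(s)$ and recover $N=I(U)$ from the identity $\overline{H}=\bigcap_{K}HK$ over open normal $K$.
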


\begin{proof}
$(1)$ Plainly $I(s)\subset\cap_{s\in U}I(U)$. Suppose conversely
that $g\in\cap_{s\in U}I(U)$. Let $X$ be a finite etale $S(s)$-scheme.
By~\cite[8.8.2.ii \& 8.10.5.x]{EGA4.3} and \cite[17.7.8.ii]{EGA4.4},
$X$ extends to a finite etale $U$-scheme $X'$ for some affine open
neighborhood $U$ of $s$ in $S$. Since $g\in I(U)$, $g$ acts trivially
on $X'(\overline{\eta})=X(\overline{\eta})$. Thus $g\in I(s)$. 

$(2)$ Let $I'(U)$ be the closure of the subgroup of $G$ generated
by $\{I(s):s\in U\}$. Plainly $I'(U)\subset I(U)$. Conversely, let
$\Omega$ be an open subgroup of $G$ containing $I'(U)$. Then for
all $s\in U$, $\Omega/I(s)$ is an open subgroup of $\pi(S(s))=G/I(s)$,
so there is a finite connected etale $S(s)$-scheme $X(s)$ and a
point $x(s)\in X(s)(\overline{\eta})$ with stabilizer $\Omega$ in
$G$. As above, we may and do extend $X(s)$ to a scheme $X_{s}$
which is finite etale over some small neighborhood $U_{s}$ of $s$
in $U$. For $s,s'\in U$, there is a unique isomorphism between the
restrictions of $X_{s}$ and $X_{s'}$ over the intersection $U_{s}\cap U_{s'}$
which maps $x(s)$ to $x(s')$. It follows that the $U_{s}$-schemes
$X_{s}$ glue to a scheme $X$ which is finite etale over $U$, and
equipped with a $G$-equivariant isomorphism $G/\Omega\simeq X(\overline{\eta})$.
Since $I(U)$ acts trivially on $X(\overline{\eta})$, we obtain $I(U)\subset\Omega$.
Since $I'(U)$ is closed, it is the intersection of all such $\Omega$'s,
thus also $I(U)\subset I'(U)$. 
\end{proof}
Given the proposition, the following convention seems reasonable:
we set
\[
\pi(\emptyset)=G\quad\text{and}\quad I(\emptyset)=\{1\}.
\]
For any open $U$ of $S$, we identify $\Set_{\pi(U)}$ with the strictly
full subcategory of $\Set_{G}$ where $I(U)$ acts trivially, and
$\Set_{\pi(U)}^{sm}$ with $\Set_{\pi(U)}\cap\Set_{G}^{sm}$. Accordingly,
we may now view a Zariski $\pi$-sheaf as a presheaf of $G$-sets,
and the category $\Shv_{\pi}(S_{Zar})$ as the strictly full subcategory
of $\PreShv_{G}(S_{Zar})$ whose objects are characterized by the
following sheaf-like property: a presheaf of $G$-sets $C$ on $S_{Zar}$
belongs to $\Shv_{\pi}(S_{Zar})$ if and only if for every Zariski
covering $U=\cup_{i}U_{i}$ in $S_{Zar}$, 
\[
C(U)=\ker\left(\prod_{i}C(U_{i})\Rightarrow\prod_{i,j}C(U_{i,j})\right)^{sm,I(U)}.
\]
Note that taking the empty covering of the empty open retrieves our
condition that $C(\emptyset)$ is a singleton, now viewed as a terminal
object in $\Set_{\pi(\emptyset)}^{sm}=\Set_{G}^{sm}$. 

Applying the sheafification functor $a:\PreShv_{G}(S_{Zar})\rightarrow\Shv_{G}(S_{Zar})$,
we obtain
\[
\gamma_{3}:\Shv_{\pi}(S_{Zar})\rightarrow\Shv_{G}(S_{Zar}).
\]

\begin{prop}
\label{prop:FromPiShv2GShv}There is an adjunction 
\[
\gamma_{3}:\Shv_{\pi}(S_{Zar})\longleftrightarrow\Shv_{G}(S_{Zar}):(-)^{sm,I}
\]
\[
\Hom_{\Shv_{G}(S_{Zar})}\left(\gamma_{3}(C),D\right)=\Hom_{\Shv_{\pi}(S_{Zar})}\left(C,D^{sm,I}\right)
\]
for $C\in\Shv_{\pi}(S_{Zar})$ and $D\in\Shv_{G}(S_{Zar})$, where
for any open $U$ of $S$, 
\[
D^{sm,I}(U)=D(U)^{sm,I(U)}
\]
The functor $\gamma_{3}$ is fully faithful, and a Zariski sheaf of
$G$-sets $D\in\Shv_{G}(S_{Zar})$ belongs to the essential image
of $\gamma_{3}$ if and only if the following conditions hold: 
\begin{enumerate}
\item For every quasi-compact open $U$ of $S$, $D(U)$ is a smooth $G$-set.
\item For every $s\in S$, $I(s)$ acts trivially on the stalk $D_{s}$
of $D$ at $s$.
\end{enumerate}
\end{prop}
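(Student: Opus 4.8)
The plan is to deduce the whole statement from the ordinary sheafification--forgetful adjunction, read off on stalks. First I would check that $(-)^{sm,I}$ really lands in $\Shv_\pi(S_{Zar})$: for $V\subseteq U$ one has $I(V)\subseteq I(U)$, so restriction carries $D(U)^{sm,I(U)}$ into $D(V)^{sm,I(V)}$ and $D^{sm,I}$ is a sub-presheaf of $G$-sets of $D$; the $\pi$-sheaf identity for $D^{sm,I}$ then follows from the sheaf identity for $D$, since a section $d\in D(U)$ that is smooth and $I(U)$-invariant automatically restricts to smooth $I(U_i)$-invariant sections, so the extra conditions imposed in the kernel are vacuous. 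For the adjunction I would compose $\Hom_{\Shv_G}(aC,D)=\Hom_{\PreShv_G}(C,D)$ with the remark that any $G$-equivariant presheaf map out of $C$ lands in $D^{sm,I}$, because $C$ takes smooth $I$-invariant values; this yields the displayed $\Hom$-formula, and full faithfulness of $\gamma_3$ becomes equivalent to the unit $C\to(aC)^{sm,I}$ being an isomorphism.

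For full faithfulness I would use that a $\pi$-sheaf is separated (its sections inject into $\prod_i C(U_i)$), so $C\to aC$ is injective, one application of the plus construction computes $aC$, and each $\check H^0(\{U_i\},C)\hookrightarrow(aC)(U)$ is an equivariant injection. Injectivity of the unit is then immediate. For surjectivity, a smooth $I(U)$-invariant section of $aC$ over $U$ is represented by a genuine compatible family $(c_i)\in\check H^0(\{U_i\},C)$; as the injection into $(aC)(U)$ is equivariant it preserves stabilizers, so $(c_i)$ is itself smooth and $I(U)$-invariant, and the defining identity $C(U)=\check H^0(\{U_i\},C)^{sm,I(U)}$ produces the required preimage.

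Since $\gamma_3$ is a fully faithful left adjoint, $D$ lies in its essential image iff the counit $\gamma_3(D^{sm,I})\to D$ is an isomorphism; the Zariski topos has enough points, so this is checked on the stalk maps $C_s\to D_s$ with $C=D^{sm,I}$. \emph{Necessity} of the two conditions is the easy half: over a quasi-compact $U$ the colimit computing $(aC)(U)$ runs over finite covers, where a compatible family of smooth sections has open (finite-intersection) stabilizer, giving $(1)$; and $I(s)\subseteq I(U)$ acts trivially on each $C(U)$ with $s\in U$, hence on $C_s=D_s$, giving $(2)$ through $I(s)=\cap_{s\in U}I(U)$ of Proposition~\ref{prop:LinkI(s)andI(U)}.

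The crux is \emph{sufficiency}: assuming $(1)$ and $(2)$, I must show each $C_s\to D_s$ is bijective. Injectivity is exactness of filtered colimits on the levelwise injections $D(U)^{sm,I(U)}\hookrightarrow D(U)$. Surjectivity is where the real work lies, and I expect it to be the \textbf{main obstacle}: given a germ $d\in D_s$, condition $(1)$ makes $D_s$ smooth, so $H_s:=\mathrm{Stab}_G(d)$ is open, and condition $(2)$ gives $I(s)\subseteq H_s$; the difficulty is to produce a quasi-compact $U'\ni s$ and a representative $d_0\in D(U')$ that is not merely smooth but genuinely $I(U')$-invariant, the point being that $I(U')$ is generated by the inertia at \emph{all} points of $U'$, not only at $s$. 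I would resolve this in two moves. First, as $U'$ shrinks the open groups $\mathrm{Stab}_G(d_0|_{U'})$ form an increasing chain of finite-index subgroups of $H_s$ with union $H_s$, hence stabilize, so for small $U'$ one has $\mathrm{Stab}_G(d_0|_{U'})=H_s$. Second, I would prove the key lemma that for any open $\Omega\supseteq I(s)$ there is an open $U''\ni s$ with $I(U'')\subseteq\Omega$: using compactness of the profinite group $G$ and $I(s)=\cap_{s\in U}I(U)$ of Proposition~\ref{prop:LinkI(s)andI(U)}, the closed sets $I(U)\setminus\Omega$ have filtered empty intersection, so one of them is empty. Taking $\Omega=H_s$ and intersecting the two neighbourhoods gives a quasi-compact $U'\ni s$ with $I(U')\subseteq H_s=\mathrm{Stab}_G(d_0|_{U'})$, so $d_0|_{U'}$ is smooth and $I(U')$-invariant, i.e.\ a section of $C(U')$ with germ $d$. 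This proves surjectivity and pins down the essential image.
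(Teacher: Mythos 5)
Your proof is correct, and its overall skeleton (well-definedness of $(-)^{sm,I}$, the adjunction via presheaf morphisms, full faithfulness via separatedness of $C$ plus an explicit check that smooth $I(U)$-invariant sections of $\gamma_3(C)$ descend to $C(U)$, and the reduction of the essential-image question to the counit being an epimorphism) matches the paper's. The one place where you take a genuinely different route is the sufficiency step: after shrinking $U$ so that $\mathrm{Stab}_G(d\vert_U)$ equals the open stabilizer $H_s$ of the germ (which the paper also does, via coset representatives of $G_{d_s}/G_d$), you invoke the compactness lemma that for any open $\Omega\supseteq I(s)$ some $I(U'')$ is contained in $\Omega$, obtained from $I(s)=\cap_{s\in U}I(U)$ and the finite intersection property applied to the directed family of closed sets $I(U)\cap\Omega^{c}$. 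The paper instead shrinks $U$ until the finite orbit $I(V)\cdot d$ is independent of $V$, passes to the limit to get $I(s)/I(s)_d\simeq I(U)/I(U)_d$, and then uses condition $(2)$ to force $I(U)_d=I(U)$. Both arguments are valid and rest on the same two inputs (smoothness of $D_s$ and proposition~\ref{prop:LinkI(s)andI(U)}); yours is arguably cleaner and is in fact the argument the paper itself uses later, in the discussion of the $\star$-condition, to show $D_s^{\star}=D_s^{I(s)}$. The only points worth making explicit in a final write-up are that one should restrict to affine (hence quasi-compact) neighbourhoods so that condition $(1)$ applies to the sections being manipulated, and that $I(U)$ is closed in $G$ as the kernel of the continuous surjection $G\twoheadrightarrow\pi(U)$, so that the compactness argument is legitimate.
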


\begin{proof}
The formula $D^{sm,I}(U)=D(U)^{sm,I(U)}$ defines a subpresheaf $D^{sm,I}$
of $D$ since for opens $V\subset U$ of $S$, we have $I(V)\subset I(U)$,
so $D(U)\rightarrow D(V)$ maps $D^{sm,I}(U)$ to $D(V)^{sm,I(U)}\subset D(V)^{sm,I(V)}=D^{sm,I}(V)$.
If $U=\cup U_{i}$, then 
\begin{align*}
D^{sm,I}(U) & =\ker\left(\prod_{i}D(U_{i})\Rightarrow\prod_{i,j}D(U_{i,j})\right)^{sm,I(U)}\\
 & =\ker\left(\prod_{i}D(U_{i})^{sm,I(U_{i})}\Rightarrow\prod_{i,j}D(U_{i,j})^{sm,I(U_{i,j})}\right)^{sm,I(U)}\\
 & =\ker\left(\prod_{i}D^{sm,I}(U_{i})\Rightarrow\prod_{i,j}D^{sm,I}(U_{i,j})\right)^{sm,I(U)}
\end{align*}
using the sheaf property of $D$ for the first equality, so $D^{sm,I}$
belongs to $\Shv_{\pi}(S_{Zar})$ and the functor $(-)^{sm,I}:\Shv_{G}(S_{Zar})\rightarrow\Shv_{\pi}(S_{Zar})$
is well-defined. We have
\[
\begin{array}{rcl}
\Hom_{\Shv_{G}(S_{Zar})}\left(\gamma_{3}(C),D\right) & = & \Hom_{\PreShv_{G}(S_{Zar})}\left(C,D\right)\\
 & = & \Hom_{\PreShv_{G}(S_{Zar})}\left(C,D^{sm,I}\right)\\
 & = & \Hom_{\Shv_{\pi}(S_{Zar})}\left(C,D^{sm,I}\right)
\end{array}
\]
by the defining adjunction for $\gamma_{3}$, the fact that all $C(U)$'s
are smooth $G$-sets fixed by $I(U)$, and the fact that $\Shv_{\pi}(S_{Zar})$
is a full subcategory of $\PreShv_{G}(S_{Zar})$. 

The full faithfulness of $\gamma_{3}$ is equivalent to the unit $C\rightarrow\gamma_{3}(C)^{sm,I}$
being an isomorphism, which we now establish. It is a monomorphism
since the sheaf-like condition on $C$ implies that $C$ is a separated
presheaf, so already $C\rightarrow\gamma_{3}(C)$ is a monomorphism.
For any section $d$ of $D=\gamma_{3}(C)$ over $U$, there is a Zariski
covering $U=\cup U_{i}$ such that $d\vert_{U_{i}}=c_{i}$ in $D(U_{i})$
for some $c_{i}\in C(U_{i})$; then $(c_{i})$ belongs to the kernel
$K$ of $\prod_{i}C(U_{i})\Rightarrow\prod_{i,j}C(U_{i,j})$ by injectivity
of $C(U_{i,j})\rightarrow D(U_{i,j})$. If $d$ moreover belongs to
$D^{sm,I}(U)$, there is a compact open subgroup $\Omega$ of $G$
containing $I(U)$ and fixing $d$; then $\Omega$ also fixes all
$d\vert_{U_{i}}$'s, all $c_{i}$'s by injectivity of $C(U_{i})\rightarrow D(U_{i})$,
and $(c_{i})$ thus belongs to $K^{sm,I(U)}$, which is the image
of $C(U)$ by the sheaf-like property of $C$; so there's a $c\in C(U)$
such that $c\vert_{U_{i}}=c_{i}=d\vert_{U_{i}}$ for all $i$, whence
$d=c$ by the sheaf property of $D$. So $C(U)=D^{sm,I}(U)$ as claimed. 

A sheaf of $G$-sets $D$ belongs to the essential image of $\gamma_{3}$
if and only if the counit $c_{D}:\gamma_{3}(D^{sm,I})\rightarrow D$
is an isomorphism. Note that $c_{D}$ is always a monomorphism since
$D^{sm,I}$ is a subpresheaf of $D$, so $c_{D}$ is an isomorphism
if and only if it is an epimorphism, i.e.~for any $U$ and $d\in D(U)$,
there is a covering $U=\cup U_{i}$ such that the stabilizer $G_{d\vert_{U_{i}}}$
of $d\vert_{U_{i}}$ is open in $G$ and contains $I(U_{i})$. The
necessity of our conditions is thus clear. Conversely, assume $(1)$
and $(2)$. Fix $s\in S$, $d_{s}\in D_{s}$, and lift $d_{s}$ to
$d\in D(U)$ for some affine neighborhood $U$ of $s$. Let $G_{d}$
and $G_{d_{s}}$ be the stabilizers of $d$ and $d_{s}$, so that
$G_{d}\subset G_{d_{s}}$. By $(1)$, $G_{d}$ is open in $G$, so
$[G_{d_{s}}:G_{d}]$ is finite. Pick a set of representatives $\sigma_{1},\cdots,\sigma_{n}\in G_{d_{s}}$
for $G_{d_{s}}/G_{d}$. Since 
\[
(\sigma_{1}d)_{s}=\cdots=(\sigma_{n}d)_{s},
\]
there is a smaller affine neighborhood $s\in U'\subset U$ where also
\[
\sigma_{1}d\vert_{U'}=\cdots=\sigma_{n}d\vert_{U'}.
\]
Replacing $U$ by $U'$, we may thus assume that $G_{d_{s}}=G_{d}$.
The $G$-orbit of $d$ is finite and for $s\in V_{1}\subset V_{2}\subset U$,
we have $I(V_{1})\cdot d\subset I(V_{2})\cdot d$. So shrinking $U$
again, we may assume that $I(V)\cdot d=I(U)\cdot d$ for all $V\subset U$
with $s\in V$. This means that 
\[
I(V)/I(V)_{d}\simeq I(U)/I(U)_{d}
\]
or equivalently:
\[
I(U)_{d}/I(V)_{d}\simeq I(U)/I(V)
\]
for all $V\subset U$ with $s\in V$. Taking limits over all such
$V'$s, we find that 
\[
I(U)_{d}/I(s)_{d}\simeq I(U)/I(s)
\]
by proposition~\ref{prop:LinkI(s)andI(U)}, or equivalently, 
\[
I(s)/I(s)_{d}\simeq I(U)/I(U)_{d}.
\]
But by $(2)$, $I(s)\subset G_{d_{s}}$ which equals $G_{d}$, so
$I(s)_{d}=I(s)$, and $I(U)_{d}=I(U)$. In other words, $d\in D^{sm,I}(U)$.
A fortiori, $d_{s}$ belongs to the image of 
\[
c_{D,s}:\gamma_{3}(D^{sm,I})_{s}\rightarrow D_{s}.
\]
Since $d_{s}$ and $s$ were arbitrary, $c_{D}$ is surjective on
stalks, hence an epimorphism, thus an isomorphism: $D\simeq\gamma_{3}(D^{sm,I})$
belongs to the essential image of $\gamma_{3}$. 
\end{proof}
We denote by $\Shv_{G}^{\star}(S_{Zar})$ the strictly full subcategory
of $\Shv_{G}(S_{Zar})$ defined in the previous proposition, so that
$\gamma_{3}$ induces an equivalence of categories
\[
\gamma_{3}:\Shv_{\pi}(S_{Zar})\rightarrow\Shv_{G}^{\star}(S_{Zar}).
\]

\begin{rem}
\label{rem:IrreducibilityUnibranchSImpliesEtalesLocIrred}If $S$
is irreducible and geometrically unibranch, any $X$ in $S_{et}$
is locally irreducible. Indeed we have seen that any quasi-compact
open $U$ of $X$ has finitely many irreducible components. Since
$U$ is etale over $S$, it is also geometrically unibranch, so its
irreducible components are disjoint, hence open. It follows that the
irreducible components of $X$ are open, and equal to its connected
components.
\end{rem}

\section{The category $\Shv_{G}^{\star}(S_{Zar})$}

With assumptions and notations as in the previous section ($S$ irreducible
and geometrically unibranch), we review here various constructions
related to Zariski sheaves of $G$-sets. We write $\Shv$, $\Shv_{G}^{\star}$
etc\ldots{} for $\Shv(S_{Zar})$, $\Shv_{G}^{\star}(S_{Zar})$ etc\ldots{}

\subsection{Sheafification}

We start with the classical adjunction
\begin{equation}
a:\PreShv_{G}\longleftrightarrow\Shv_{G}:\mathrm{inc}
\end{equation}
where $a$ is the sheafification functor. For $D\in\PreShv_{G}$,
the sheaf of sets underlying the sheaf of $G$-sets $a(D)$ is the
sheafification of the presheaf of sets underlying $D$. For every
$s\in S$, the unit $D\rightarrow a(D)$ induces an isomorphism of
$G$-sets $D_{s}\rightarrow a(D)_{s}$. 

\subsection{Smoothness for sheaves of $G$-sets.}

Let $D$ be a sheaf of $G$-sets. For a subgroup $H$ of $G$, the
formula $D^{H}(U)=D(U)^{H}$ defines a subsheaf $D^{H}$ of $D$,
with $(D^{H})_{s}\subset(D_{s})^{H}$ for all $s\in S$. We denote
by $D^{sm}$ the subsheaf of $D$ defined by 
\[
D^{sm}=\cup D^{\Omega}\quad\text{in}\quad\Shv
\]
where $\Omega$ runs through the compact open subgroups of $G$; this
is a $G$-stable subsheaf of $D$ with $D(U)^{sm}\subset D^{sm}(U)$
for every open $U$ of $S$ and $(D^{sm})_{s}\subset(D_{s})^{sm}$
for every $s\in S$. We say that $D$ is smooth if $D^{sm}=D$, and
we denote by $\Shv_{G}^{sm}$ the strictly full subcategory of smooth
sheaves of $G$-sets in $\Shv_{G}$. We have adjunctions:
\begin{equation}
\mathrm{inc}:\Shv_{G}^{sm}\longleftrightarrow\Shv_{G}:(-)^{sm}.
\end{equation}
The counit is the embedding $D^{sm}\hookrightarrow D$. 
\begin{lem}
For a Zariski sheaf of $G$-sets $D$, the following are equivalent:
\begin{enumerate}
\item For every quasi-compact open $U$ of $S$, $D(U)$ is a smooth $G$-set,
\item For every affine open $U$ of $S$, $D(U)$ is a smooth $G$-set,
\item $D$ is smooth, i.e.~$D\in\Shv_{G}^{sm}$. 
\end{enumerate}
They imply 
\begin{enumerate}[resume]
\item For every $s\in S$, $D_{s}$ is a smooth $G$-set.
\item For every open $U$ of $S$ and $d\in D(U)$ the stabilizer $G_{d}$
of $d$ is closed in $G$.
\end{enumerate}
\end{lem}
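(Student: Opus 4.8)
The plan is to establish the cycle $(1)\Rightarrow(2)\Rightarrow(3)\Rightarrow(1)$ and then to deduce $(4)$ and $(5)$ from condition $(2)$. Throughout I would use that $G=\pi(\eta)$ is profinite, so that its open subgroups are precisely its compact open subgroups, that a subgroup containing an open subgroup is open, and that every open subgroup of a topological group is closed. Implication $(1)\Rightarrow(2)$ is immediate, affine opens being quasi-compact.

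For $(2)\Rightarrow(3)$ I would argue on stalks. The canonical inclusion $D^{sm}\hookrightarrow D$ is a monomorphism of Zariski sheaves, so it is an isomorphism as soon as it is surjective on every stalk $D_{s}$. Given $d_{s}\in D_{s}$, lift it to a section $d\in D(U)$ over an affine open neighbourhood $U$ of $s$; by $(2)$ the $G$-set $D(U)$ is smooth, so $\Omega:=G_{d}$ is an open, hence compact open, subgroup of $G$. Then $d\in D^{\Omega}(U)$, so its germ $d_{s}$ lies in $(D^{\Omega})_{s}\subset(D^{sm})_{s}$. Thus $D^{sm}\to D$ is bijective on stalks, hence an isomorphism, i.e.~$D$ is smooth.

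For $(3)\Rightarrow(1)$, let $U$ be quasi-compact and $d\in D(U)$. Since $D=D^{sm}=\cup_{\Omega}D^{\Omega}$ in $\Shv$ and the family $\{D^{\Omega}\}_{\Omega}$ is directed, $d$ restricts, over the members of some open covering of $U$, to sections of various $D^{\Omega}$; by quasi-compactness this covering may be taken finite, say $U=U_{1}\cup\cdots\cup U_{n}$ with $d\vert_{U_{i}}$ fixed by a compact open subgroup $\Omega_{i}$. Then $\Omega:=\Omega_{1}\cap\cdots\cap\Omega_{n}$ is again compact open, and for $g\in\Omega$ the sections $gd$ and $d$ of $D$ over $U$ agree on each $U_{i}$ (the $G$-action commuting with restriction), hence $gd=d$ since $D$ is separated. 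So $\Omega\subset G_{d}$, the point $d$ is smooth in $D(U)$, and as $d$ was arbitrary $D(U)$ is a smooth $G$-set. This closes the cycle.

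Finally, assume the equivalent conditions hold. For $(4)$: a germ $d_{s}\in D_{s}$ lifts to some $d\in D(U)$ with $U$ affine, $G_{d}$ is open by $(2)$, and $G_{d}\subset G_{d_{s}}$ because the $G$-action commutes with passage to germs; hence $G_{d_{s}}$ is open and $D_{s}$ is smooth. For $(5)$: given $d\in D(U)$ for an arbitrary open $U$, cover $U$ by affine opens $U_{i}$; then $G_{d}=\cap_{i}G_{d\vert_{U_{i}}}$, since the action commutes with restriction and $D$ is separated, each $G_{d\vert_{U_{i}}}$ is open by $(2)$, hence closed, and therefore $G_{d}$ is closed as an intersection of closed subgroups. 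I expect no genuine difficulty here; the only step requiring care is the unwinding, in $(2)\Rightarrow(3)$ and $(3)\Rightarrow(1)$, of the sheaf-theoretic union $\cup D^{\Omega}$ in terms of honest local sections, and it is precisely quasi-compactness that powers $(3)\Rightarrow(1)$ by allowing finitely many $\Omega_{i}$ to be replaced by a single open subgroup.
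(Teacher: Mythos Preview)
Your proof is correct and follows essentially the same strategy as the paper's. The only cosmetic differences are that the paper establishes $(2)\Rightarrow(3)$ by observing $D^{sm}(U)=D(U)$ on the affine basis (rather than on stalks) and deduces $(5)$ from $(4)$ via the injection $D(U)\hookrightarrow\prod_{s\in U}D_{s}$ (rather than from $(2)$ via an affine cover); your $(3)\Rightarrow(1)$ argument is exactly what the paper invokes by citing \cite[\href{https://stacks.math.columbia.edu/tag/0738}{Tag 0738}]{SP}.
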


\begin{proof}
Plainly $(1)\Rightarrow(2)\Rightarrow(4)$ and $(4)\Rightarrow(5)$
by injectivity of $D(U)\rightarrow\prod_{s\in U}D_{s}$. If $(2)$
holds, then $D^{sm}(U)=D(U)$ for all affine open $U$, hence for
all $U$ by the sheaf property for $D^{sm}$ and $D$, so $(2)\Rightarrow(3)$.
Finally $(3)\Rightarrow(1)$ by \cite[\href{https://stacks.math.columbia.edu/tag/0738}{Tag 0738}]{SP}. 
\end{proof}

\subsection{The $\star$-condition}

Plainly, $\Shv_{G}^{\star}\subset\Shv_{G}^{sm}$ and again, there
is an adjunction 
\begin{equation}
\mathrm{inc}:\Shv_{G}^{\star}\longleftrightarrow\Shv_{G}^{sm}:(-)^{\star}.
\end{equation}
The right adjoint functor takes $D\in\Shv_{G}^{sm}$ to its subsheaf
$D^{\star}$ defined by
\[
D^{\star}(U)=\left\{ d\in D(U):\forall s\in U,\,d_{s}\in D_{s}^{I(s)}\right\} 
\]
The counit is the embedding $D^{\star}\hookrightarrow D$ and 
\[
\forall s\in S:\qquad D_{s}^{\star}=D_{s}^{I(s)}.
\]
Indeed $D_{s}^{\star}\subset D_{s}^{I(s)}$ by definition, and the
proof of proposition~\ref{prop:FromPiShv2GShv} gives the other inclusion:
the stabilizer $G_{d_{s}}$ of $d_{s}\in D_{s}^{I(s)}$ is open in
$G$ and contains $I(s)=\cap_{s\in U}I(U)$, so it contains $I(U)$
for some sufficiently small affine neighborhood $U$ of $s$. Shrinking
$U$ if necessary, we may assume that $d_{s}$ lifts to $d\in D(U)$,
and shrinking it further, that $G_{d}=G_{d_{s}}$. Then $G_{d}$ contains
$I(U)$, so $d\in D^{\star}(U)$, hence $d_{s}\in D_{s}^{\star}$. 

\subsection{Monomorphisms and epimorphisms\protect\label{subsec:MonoEpisInShv*}}

Let $d$ be a morphism in $\Shv_{G}^{\star}$. 

Since $\Shv_{G}^{\star}$ is stable under fiber products in $\PreShv_{G}$,
$d$ is a monomorphism in $\Shv_{G}^{\star}$ if and only if $d$
is a monomorphism in $\PreShv_{G}$. Since $\Shv_{G}^{\star}\hookrightarrow\Shv_{G}$
has a right adjoint whose counits are monomorphisms, $d$ is an epimorphism
in $\Shv_{G}^{\star}$ if and only if $d$ is an epimorphism in $\Shv_{G}$
(i.e.~also: an epimorphism in $\Shv$). 

\subsection{Sheaves of $G$-sets with trivial $G$-action\protect\label{subsec:SheavesNoAction}}

They belong to $\Shv_{G}^{\star}$, and form a strictly full subcategory
which we identify with $\Shv$. It fits into adjunctions 
\begin{equation}
\mathrm{inc}:\Shv\longleftrightarrow\Shv_{G}^{\star}:(-)^{G}
\end{equation}
\begin{equation}
(-)_{G}:\Shv_{G}^{\star}\longleftrightarrow\Shv:\mathrm{inc}\label{eq:Adj*NoAct}
\end{equation}
These are both restrictions of analogous adjunctions for the embedding
of $\Shv$ into the larger category $\Shv_{G}$, where the right and
left adjoints are given by
\[
D^{G}(U)=D(U)^{G}\quad\text{and}\quad D_{G}=a(U\mapsto G\backslash D(U)).
\]
The counit $D^{G}\hookrightarrow D$ is a monomorphism and the unit
$D\twoheadrightarrow D_{G}$ is an epimorphism. 

\subsection{\protect\label{subsec:ConstantSheaves}Constant sheaves}

The constant sheaf functor has left and right adjoints, 
\begin{equation}
(-)_{\eta}:\Shv_{G}\longleftrightarrow\Set_{G}:(-)_{S}
\end{equation}
\begin{equation}
(-)_{S}:\Set_{G}\longleftrightarrow\Shv_{G}:\Gamma(S,-)
\end{equation}
Here $\Gamma(S,D)=D(S)$ while $(-)_{\eta}$ is the stalk functor
at the generic point $\eta$ of $S$. The unit $X\rightarrow\Gamma(S,X_{S})$
and counit $(X_{S})_{\eta}\rightarrow X$ are isomorphisms, so $(-)_{S}$
is fully faithful. Passing to the smooth subcategories, we obtain
adjunctions
\begin{equation}
(-)_{\eta}:\Shv_{G}^{sm}\longleftrightarrow\Set_{G}^{sm}:(-)_{S}
\end{equation}
\begin{equation}
(-)_{S}:\Set_{G}^{sm}\longleftrightarrow\Shv_{G}^{sm}:\Gamma(S,-)^{sm}.
\end{equation}
Composing $(-)_{S}$ with $(-)^{\star}$ gives a new functor $(-)_{S}^{\star}$
fitting in an adjunction
\begin{equation}
(-)_{\eta}:\Shv_{G}^{\star}\longleftrightarrow\Set_{G}^{sm}:(-)_{S}^{\star}\label{eq:Adj**S}
\end{equation}
with unit $D\rightarrow(D_{\eta})_{S}^{\star}$. The counit is an
isomorphism since $(X_{S}^{\star})_{\eta}=X^{I(\eta)}=X$, so the
right adjoint functor $(-)_{S}^{\star}$ is fully faithful. 
\begin{lem}
\label{lem:EssImageOf*S}For $D\in\Shv_{G}^{\star}$, the following
conditions are equivalent:
\begin{enumerate}
\item $D$ belongs to the essential image of $(-)_{S}^{\star}$
\item The unit $D\rightarrow(D_{\eta})_{S}^{\star}$ is an isomorphism.
\item For any $s\in S$, $D_{s}\rightarrow D_{\eta}^{I(s)}$ is bijective,
\item For any open $U\neq\emptyset$ of $S$, $D(U)\rightarrow D_{\eta}^{I(U)}$
is bijective.
\end{enumerate}
\end{lem}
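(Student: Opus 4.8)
The plan is to prove Lemma~\ref{lem:EssImageOf*S} by establishing the cycle of implications $(1)\Rightarrow(2)\Rightarrow(3)\Rightarrow(4)\Rightarrow(2)$, together with the trivial $(2)\Rightarrow(1)$. The implication $(2)\Rightarrow(1)$ is immediate, and $(1)\Rightarrow(2)$ follows because the right adjoint $(-)_{S}^{\star}$ is fully faithful (as noted just before the lemma, the counit of the adjunction~\eqref{eq:Adj**S} is an isomorphism), so a sheaf in its essential image has invertible unit. For $(2)\Rightarrow(3)$: if $D\rightarrow(D_{\eta})_{S}^{\star}$ is an isomorphism, then passing to stalks at $s\in S$ gives $D_{s}\simeq\bigl((D_{\eta})_{S}^{\star}\bigr)_{s}$, and the right-hand side should be computed to be $(D_{\eta})^{I(s)}$. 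This last computation is the crux of the argument: one needs that for a smooth $G$-set $X$, the constant sheaf $X_{S}$ has $X_{S}^{\star}(U)=X^{I(U)}$ for nonempty $U$ (using $\bigl(X_{S}\bigr)_{t}=X$ for all $t$, so the $\star$-condition at $U$ reads "$d_{t}\in X^{I(t)}$ for all $t\in U$", i.e.~$d$ is fixed by every $I(t)$, $t\in U$, hence by $I(U)$ by Proposition~\ref{prop:LinkI(s)andI(U)}(2)); and then the stalk of this subsheaf at $s$ is $\bigl(X_{S}^{\star}\bigr)_{s}=\varinjlim_{s\in U}X^{I(U)}=X^{\cap_{s\in U}I(U)}=X^{I(s)}$ by Proposition~\ref{prop:LinkI(s)andI(U)}(1). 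This gives $(2)\Rightarrow(4)$ at the same time, since $(D_{\eta})_{S}^{\star}(U)=(D_{\eta})^{I(U)}$ directly.

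For $(3)\Rightarrow(4)$ and $(4)\Rightarrow(2)$ I would argue as follows. Assume $(3)$. Fix a nonempty open $U$; I want $D(U)\xrightarrow{\sim}D_{\eta}^{I(U)}$. Injectivity holds because $D(U)\hookrightarrow\prod_{s\in U}D_{s}$ (sheaf property, using that $D$ is smooth hence its value on $U$ embeds into the product of stalks) and the maps $D_{s}\hookrightarrow D_{\eta}$ are injective by $(3)$, compatibly with the generization maps; so $D(U)\hookrightarrow D_{\eta}$, and the image lands in $D_{\eta}^{I(U)}$ because a section over $U$ is fixed by $I(U)$ (as $D(U)\in\Set_{\pi(U)}$, i.e.~$I(U)$ acts trivially on $D(U)$). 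For surjectivity, take $x\in D_{\eta}^{I(U)}$. For each $s\in U$, $x\in D_{\eta}^{I(s)}=D_{s}$ by $(3)$ and $I(s)\subset I(U)$; so $x$ defines a compatible family of stalk sections $(x_{s})_{s\in U}$, which glues to a section $d\in D(U)$ with $d_{\eta}=x$ provided the family is "locally consistent", i.e.~on some covering the germs come from honest sections agreeing on overlaps — this is where one invokes that $D$ is a sheaf and that near each $s$ the germ $x\in D_{s}$ lifts to a genuine section of $D$ over a neighborhood whose further germs all equal $x$ (again using Proposition~\ref{prop:LinkI(s)andI(U)} and the smoothness of $x$, exactly as in the proof of the $\star$-condition computation in Section~7.3).

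Finally, $(4)\Rightarrow(2)$: condition $(4)$ says precisely that for every nonempty $U$ the canonical map $D(U)\rightarrow D_{\eta}^{I(U)}=(D_{\eta})_{S}^{\star}(U)$ is a bijection; for $U=\emptyset$ both sides are the terminal object; and these bijections are visibly compatible with restriction, so they assemble to an isomorphism of sheaves $D\xrightarrow{\sim}(D_{\eta})_{S}^{\star}$, which is $(2)$. I expect the main obstacle to be the surjectivity half of $(3)\Rightarrow(4)$ (equivalently, verifying that a family of germs fixed by the relevant inertia groups actually descends to a global section over $U$): one must produce, around each point of $U$, an honest lift of the germ that remains equal to that germ on a neighborhood, and then glue — the gluing is automatic once the local lifts agree, but the argument that the local lifts can be taken compatible requires the geometrically-unibranch input through Proposition~\ref{prop:LinkI(s)andI(U)}, mirroring the end of the proof of Proposition~\ref{prop:FromPiShv2GShv}. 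Everything else is bookkeeping with adjunctions and the sheaf axiom.
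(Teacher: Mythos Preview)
Your proof is correct, but it takes a detour the paper avoids. You compute both key identities
\[
\bigl((D_{\eta})_{S}^{\star}\bigr)_{s}=D_{\eta}^{I(s)}
\qquad\text{and}\qquad
(D_{\eta})_{S}^{\star}(U)=D_{\eta}^{I(U)}\ (U\neq\emptyset),
\]
and these are exactly what the paper uses --- but the paper then stops: condition~(3) says precisely that the unit $D\to(D_{\eta})_{S}^{\star}$ is an isomorphism on every stalk, hence an isomorphism of sheaves, giving $(3)\Leftrightarrow(2)$ immediately; likewise (4) says the unit is an isomorphism on every section, giving $(4)\Leftrightarrow(2)$. Your direct argument for $(3)\Rightarrow(4)$ (gluing local lifts of a given $x\in D_{\eta}^{I(U)}$) is therefore unnecessary: having established the first displayed identity, you already have $(3)\Rightarrow(2)$, and your own $(2)\Rightarrow(4)$ then closes the loop without any gluing.

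One small imprecision in your $(3)\Rightarrow(4)$ argument: you write that the image of $D(U)\to D_{\eta}$ lands in $D_{\eta}^{I(U)}$ ``as $D(U)\in\Set_{\pi(U)}$''. That is \emph{not} a general property of $D\in\Shv_{G}^{\star}$ (compare Lemma~\ref{lem:PropertiesOfet}, which only asserts it for $D\in\Shv_{G}^{et}$). Under your hypothesis~(3) it does hold, but it needs an argument: since each $D_{s}\to D_{\eta}$ is injective, so is $D(U)\to D_{\eta}$, hence also each $D(U)\to D_{s}$; then for $g\in I(s)$ and $d\in D(U)$ one has $(gd)_{s}=g\,d_{s}=d_{s}$, so $gd=d$, and Proposition~\ref{prop:LinkI(s)andI(U)} gives triviality of $I(U)$ on $D(U)$. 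Alternatively, simply observe that the image of $D(U)\to D_{\eta}$ lies in the image of each $D_{s}\to D_{\eta}$, namely $D_{\eta}^{I(s)}$, and intersect over $s\in U$.
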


\begin{proof}
$(1)\Leftrightarrow(2)$ by general properties of adjunctions, $(2)\Leftrightarrow(3)$
since 
\[
\forall s\in S:\quad((D_{\eta})_{S}^{\star})_{s}=D_{\eta}^{I(s)}
\]
and $(2)\Leftrightarrow(4)$ since for $U\neq\emptyset$, 
\[
(D_{\eta})_{S}^{\star}(U)=\{d\in D_{\eta}:\forall s\in U,\,d\in D_{\eta}^{I(s)}\}=D_{\eta}^{I(U)}
\]
by proposition~\ref{prop:LinkI(s)andI(U)} and smoothness of $D_{\eta}$.
\end{proof}

\subsection{Et/Set sheaves\protect\label{subsec:Et/Set-sheaves}}

For $D\in\Shv_{G}$, taking images of counits in $\Shv_{G}$, we define
\[
D_{et}=\mathrm{Im}\left(D\rightarrow(D_{\eta})_{S}\times D_{G}\right)\quad\text{and}\quad D_{set}=\mathrm{Im}\left(D\rightarrow(D_{\eta})_{S}\right).
\]
For $D\in\Shv_{G}^{\star}$, these are also images of the corresponding
counits in $\Shv_{G}^{\star}$:
\begin{align*}
D_{set} & =\mathrm{Im}\left(D\rightarrow(D_{\eta})_{S}^{\star}\right)\\
D_{et} & =\mathrm{Im}\left(D\rightarrow(D_{\eta})_{S}^{\star}\times D_{G}\right)\\
 & =\mathrm{Im}\left(D\rightarrow D_{set}\times D_{G}\right)
\end{align*}
So we have epimorphisms in $\Shv_{G}^{\star}$, 
\[
\xyR{1ex}\xyC{2pc}\xymatrix{ &  & D_{set}\\
D\ar@{->>}[r] & D_{et}\ar@{->>}[ur]\ar@{->>}[dr]\\
 &  & D_{G}
}
\]
We denote by $\Shv_{G}^{et}$ and $\Shv_{G}^{set}\subset\Shv_{G}^{et}$
the strictly full subcategories of $\Shv_{G}^{\star}$ where $D\simeq D_{et}$,
respectively $D\simeq D_{set}$. We thus obtain new adjunctions 
\begin{equation}
(-)_{et}:\Shv_{G}^{\star}\longleftrightarrow\Shv_{G}^{et}:\mathrm{inc}\label{eq:AdjStarEt}
\end{equation}
\begin{equation}
(-)_{set}:\Shv_{G}^{\star}\longleftrightarrow\Shv_{G}^{set}:\mathrm{inc}\label{eq:AdjStarSet}
\end{equation}
with epimorphic units $D\twoheadrightarrow D_{et}$ and $D\twoheadrightarrow D_{set}$. 

\subsection{Opens}

For an open embedding $j_{U}:U\hookrightarrow S$, we have the usual
adjunctions
\begin{equation}
j_{U}^{\ast}:\Shv_{G}(S_{Zar})\longleftrightarrow\Shv_{G}(U_{Zar}):j_{U*}
\end{equation}
\begin{equation}
j_{U!}:\Shv_{G}(U_{Zar})\longleftrightarrow\Shv_{G}(S_{Zar}):j_{U}^{\ast}\label{eq:AdjjU!jU*}
\end{equation}
where $j_{U!}$ takes $E\in\Shv_{G}(U_{Zar})$ to its extension
\[
V\in S_{Zar}\mapsto j_{U!}(E)(V)=\begin{cases}
\emptyset & \text{if }V\not\subset U,\\
E(V) & \text{if }V\subset U.
\end{cases}
\]
The unit $E\rightarrow j_{U}^{\ast}j_{U!}E$ is an isomorphism, so
$j_{U!}$ is fully faithful, and the counit $j_{U!}j_{U}^{\ast}D\rightarrow D$
is a monomorphism. The adjunction (\ref{eq:AdjjU!jU*}) preserves
the full subcategories $\Shv_{G}^{sm}$, $\Shv_{G}^{\star}$, $\Shv$,
and is compatible with the stalk functor $(-)_{\eta}$ (if $U\neq\emptyset$),
so it is compatible with all of the above constructions -- details
left to the reader. 

\subsection{Characterization of et/set-sheaves\protect\label{subsec:CharactOfEtSetSheaves}}

For every sheaf of $G$-sets $D\in\Shv_{G}$, we also have the following
constructions. 
\begin{itemize}
\item The support of $D$ is the open subset of $S$ defined by
\[
\Supp(D)=\left\{ s\in S:D_{s}\neq\emptyset\right\} .
\]
\item The set of \emph{connected components} of $D$ is $\pi_{0}(D)=G\backslash D_{\eta}$.
For $c\in\pi_{0}(D)$ viewed as a $G$-orbit in $D_{\eta}$, we define
$D(c)$ by the cartesian diagram 
\[
\xyR{2pc}\xymatrix{D(c)\ar[r]\ar@{^{(}->}[d] & c_{S}\ar@{^{(}->}[d]\\
D\ar[r] & (D_{\eta})_{S}
}
\]
So $D(c)$ is a $G$-stable subsheaf of $D$ and $D=\coprod_{c\in\pi_{0}(D)}D(c)$
in $\Shv_{G}$.
\item We denote by $\mathscr{S}_{G}(D)$ the set of all pairs $(U,\gamma)$
where $U\neq\emptyset$ is open in $S$ and $\gamma\subset D(U)$
is a $G$-orbit, and equip $\mathscr{S}_{G}(D)$ with the partial
order 
\[
(U,\gamma)\leq(U',\gamma')\iff U\subset U'\quad\text{and}\quad\gamma=\gamma'\vert_{U}.
\]
We denote by $\mathscr{S}_{G}^{m}(D)$ the set of maximal elements
in $\left(\mathscr{S}_{G}(D),\leq\right)$. 
\item Sending $(U,\gamma)\in\mathscr{S}_{G}(D)$ to the image $\gamma_{\eta}\subset D_{\eta}$
of $\gamma\subset D(U)$ defines maps 
\[
\mathscr{S}_{G}^{m}(D)\hookrightarrow\mathscr{S}_{G}(D)\twoheadrightarrow\pi_{0}(D).
\]
We denote by $\mathscr{S}_{G}^{m}(D,c)\hookrightarrow\mathscr{S}_{G}(D,c)$
their fibers over $c\in\pi_{0}(D)$. Thus
\[
\mathscr{S}_{G}^{m}(D,c)=\mathscr{S}_{G}^{m}(D(c))\quad\text{and}\quad\mathscr{S}_{G}(D,c)=\mathscr{S}_{G}(D(c)).
\]
\item Fix $(U,\gamma)\in\mathscr{S}_{G}(D)$. Since $D(U)=\Gamma(U,j_{U}^{\ast}D)$,
the isomorphisms 
\begin{align*}
\Hom_{G}\left(\gamma,D(U)\right) & =\Hom_{\Shv_{G}(U_{Zar})}\left(\gamma_{U},j_{U}^{\ast}D\right)\\
 & =\Hom_{\Shv_{G}(S_{Zar})}\left(j_{U!}(\gamma_{U}),D\right)
\end{align*}
map the $G$-equivariant embedding $\gamma\hookrightarrow D(U)$ to
a morphism 
\[
j_{U!}(\gamma_{U})\rightarrow D\quad\text{in}\quad\Shv_{G}(S_{Zar})
\]
whose evaluation at $V\subset S$ is $\emptyset\rightarrow D(V)$
if $V\not\subset U$, and the composition of the embedding $\gamma\hookrightarrow D(U)$
with the restriction map $D(U)\rightarrow D(V)$ if $V\subset U$.
In particular it factors through $D(c)\subset D$, where $c=\gamma_{\eta}\in\pi_{0}(D)$.
\item Summing these morphisms across $\mathscr{S}_{G}^{m}(D)$, we obtain
a morphism
\[
\coprod_{(U,\gamma)\in\mathscr{S}_{G}^{m}(D)}j_{U!}(\gamma_{U})\rightarrow D\quad\text{in}\quad\Shv_{G}(S_{Zar})
\]
whose fiber over $D(c)\hookrightarrow D$ is the morphism
\[
\coprod_{(U,\gamma)\in\mathscr{S}_{G}^{m}(D(c))}j_{U!}(\gamma_{U})\rightarrow D(c)\quad\text{in}\quad\Shv_{G}(S_{Zar}).
\]
\end{itemize}
\begin{lem}
If $D$ is smooth, any element of $\mathscr{S}_{G}(D)$ has a majorant
in $\mathscr{S}_{G}^{m}(D)$.
\end{lem}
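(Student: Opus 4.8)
The plan is to apply Zorn's lemma to the sub-poset
$\mathscr{S}_{G}(D)_{\geq(U_{0},\gamma_{0})}$ of all elements of
$\mathscr{S}_{G}(D)$ lying above a given $(U_{0},\gamma_{0})$. This sub-poset
is nonempty, and a maximal element of it is automatically maximal in
$\mathscr{S}_{G}(D)$ (any majorant of it in $\mathscr{S}_{G}(D)$ also
majorizes $(U_{0},\gamma_{0})$, hence lies in the sub-poset), so it furnishes
the desired majorant in $\mathscr{S}_{G}^{m}(D)$. Thus it suffices to produce
an upper bound for an arbitrary nonempty chain $\{(U_{i},\gamma_{i})\}_{i\in C}$
in $\mathscr{S}_{G}(D)_{\geq(U_{0},\gamma_{0})}$.

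Set $U=\bigcup_{i\in C}U_{i}$, a nonempty open of $S$. Whenever $U_{i}\subseteq U_{i'}$,
the relation $(U_{i},\gamma_{i})\leq(U_{i'},\gamma_{i'})$ means $\gamma_{i}=\gamma_{i'}\vert_{U_{i}}$,
so the restriction map $D(U_{i'})\rightarrow D(U_{i})$ carries $\gamma_{i'}$
onto $\gamma_{i}$; since $C$ is linearly ordered, hence directed, this exhibits
$(\gamma_{i})_{i\in C}$ as an inverse system of $G$-sets over $C$ with
transition maps the ($G$-equivariant, surjective) restriction maps. I claim
that $\underleftarrow{\lim}_{i\in C}\gamma_{i}\neq\emptyset$. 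Granting this:
a point of the limit is a compatible family $(d_{i})_{i\in C}$ with
$d_{i}\in\gamma_{i}\subseteq D(U_{i})$ and $d_{i'}\vert_{U_{i}}=d_{i}$
whenever $U_{i}\subseteq U_{i'}$; as $C$ is a chain, $U_{i}\cap U_{i'}$
is the smaller of the two $U$'s, so $(d_{i})$ satisfies the gluing condition
for the cover $U=\bigcup_{i}U_{i}$, and the sheaf axiom for $D$ yields a
unique $d\in D(U)$ with $d\vert_{U_{i}}=d_{i}$ for all $i$. Put
$\gamma=G\cdot d$. By $G$-equivariance of restriction, $\gamma\vert_{U_{i}}=G\cdot(d\vert_{U_{i}})=G\cdot d_{i}=\gamma_{i}$
for every $i$, so $(U,\gamma)\in\mathscr{S}_{G}(D)$ majorizes each
$(U_{i},\gamma_{i})$, and a fortiori $(U_{0},\gamma_{0})$; it is the required
upper bound. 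Zorn's lemma then produces a maximal element of
$\mathscr{S}_{G}(D)_{\geq(U_{0},\gamma_{0})}$, which is the sought majorant.

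It remains to check that $\underleftarrow{\lim}_{i\in C}\gamma_{i}\neq\emptyset$,
and this is the one point where smoothness of $D$ enters. Since $D$ is
smooth, the stabilizer $G_{d_{i}}$ of any $d_{i}\in D(U_{i})$ is closed in
the profinite group $G$ (one of the implications in the Lemma on smoothness
above), so each $\gamma_{i}\cong G/G_{d_{i}}$ is a nonempty compact Hausdorff
space and the transition maps are continuous. An inverse limit of nonempty
compact Hausdorff spaces over a directed set is nonempty: inside the compact
space $\prod_{i\in C}\gamma_{i}$ (Tychonoff), $\underleftarrow{\lim}_{i}\gamma_{i}$
is the intersection of the closed sets $Y_{i,i'}=\{(x_{j})_{j}:\res(x_{i'})=x_{i}\}$
for $U_{i}\subseteq U_{i'}$, and these have the finite intersection property,
since for finitely many indices one picks an upper bound $m\in C$ and pushes
an arbitrary element of $\gamma_{m}$ down along the transition maps to meet
all the corresponding constraints; hence the intersection is nonempty.

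The only real obstacle is this coherent choice of sections $d_{i}\in\gamma_{i}$
along the chain, i.e.\ the non-emptiness of the inverse limit; it is exactly
what forces the use of compactness of the orbits, which is in turn exactly
what smoothness of $D$ (through closedness of stabilizers) supplies.
Everything else---the Zorn bookkeeping, the reduction to a maximal element
of the sub-poset, and the gluing---is routine.
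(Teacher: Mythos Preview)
Your argument is correct and follows essentially the same route as the paper: Zorn's lemma on the poset of majorants of $(U_{0},\gamma_{0})$, with the crucial step being the nonemptiness of the inverse limit of orbits along a chain, obtained by endowing each orbit with the quotient topology from the profinite group $G$ and invoking compactness (Hausdorffness coming from closedness of stabilizers, which is where smoothness enters). The paper's write-up is slightly terser---it folds the gluing step into the identification $\underleftarrow{\lim}_{(V,\theta)\in\mathscr{C}}D(V)=D(U')$---but the content is the same.
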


\begin{proof}
Fix $(U,\gamma)$. By Zorn's lemma, we have to show that any chain
$\mathscr{C}$ in 
\[
\mathscr{M}=\{(V,\theta)\in\mathscr{S}_{G}(D):(U,\gamma)\leq(V,\theta)\}
\]
has an upper bound in $\mathscr{M}$. Let $U'=\cup_{(V,\theta)\in\mathscr{C}}V$.
Then any element of 
\[
\underleftarrow{\lim}_{(V,\theta)\in\mathscr{C}}\theta\subset\underleftarrow{\lim}_{(V,\theta)\in\mathscr{C}}D(V)=D(U')
\]
defines a $G$-orbit $\gamma'\subset D(U')$ such that $(U',\gamma')\in\mathscr{M}$
dominates $\mathscr{C}$. So we have to show that the left hand side
limit is not empty. For each single $(V,\theta)\in\mathscr{C}$, we
may first choose some $x\in\theta$, consider the corresponding orbit
map $G\twoheadrightarrow\theta$, and equip $\theta$ with the induced
quotient topology. One checks that it does not depend upon $x$, and
turns $\theta$ into a compact topological space which is Hausdorff
since the stabilizer $G_{x}$ is closed. Then $(V,\theta)\mapsto\theta$
is an inverse system of nonempty compact Hausdorff spaces indexed
by a filtered set (namely $\mathscr{C}$), so its limit is not empty. 
\end{proof}
\begin{prop}
\label{prop:CaractOfsetAndet}Suppose that $D\in\Shv_{G}^{sm}(S_{Zar})$.
Then 
\begin{enumerate}
\item The morphism 
\[
\coprod_{(U,\gamma)\in\mathscr{S}_{G}^{m}(D)}j_{U!}(\gamma_{U})\rightarrow D
\]
 is an epimorphism.
\item The canonical map 
\[
\mathscr{S}_{G}^{m}(D)\rightarrow\pi_{0}(D)
\]
 is surjective. 
\item The following conditions are equivalent:
\begin{enumerate}
\item For any opens $\emptyset\neq V\subset U$ of $S$, $D(U)\rightarrow D(V)$
is injective on orbits,
\item For any open $U$ of $S$ and $s\in U$, $D(U)\rightarrow D_{s}$
is injective on orbits,
\item For any open $U\neq\emptyset$ of $S$, $D(U)\rightarrow D_{\eta}$
is injective on orbits,
\item For any $s\in S$, $D_{s}\rightarrow D_{\eta}$ is injective on orbits,
\item For any specialization $s^{\prime}\rightsquigarrow s$ in $S$, $D_{s}\rightarrow D_{s'}$
is injective on orbits,
\item For every $(U,\gamma)\in\mathscr{S}_{G}(D)$, $j_{U!}(\gamma_{U})\rightarrow D$
is a monomorphism,
\item For every $(U,\gamma)\in\mathscr{S}_{G}^{m}(D)$, $j_{U!}(\gamma_{U})\rightarrow D$
is a monomorphism,
\item The product of units $D\rightarrow(D_{\eta})_{S}\times D_{G}$ is
a monomorphism,
\item The unit $D\rightarrow D_{et}$ is an isomorphism.
\end{enumerate}
\item The following conditions are equivalent:
\begin{enumerate}
\item For any opens $\emptyset\neq V\subset U$ of $S$, $D(U)\rightarrow D(V)$
is injective,
\item For any open $U$ of $S$ and $s\in U$, $D(U)\rightarrow D_{s}$
is injective,
\item For any open $U\neq\emptyset$ of $S$, $D(U)\rightarrow D_{\eta}$
is injective,
\item For any $s\in S$, $D_{s}\rightarrow D_{\eta}$ is injective,
\item For any specialization $s^{\prime}\rightsquigarrow s$ in $S$, $D_{s}\rightarrow D_{s'}$
is injective,
\item $D$ satisfies the conditions of $(3)$ and $\mathscr{S}_{G}^{m}(D)\rightarrow\pi_{0}(D)$
is bijective, 
\item The morphism $\coprod_{(U,\gamma)\in\mathscr{S}_{G}^{m}(D)}j_{U!}(\gamma_{U})\rightarrow D$
is an isomorphism,
\item The unit $D\rightarrow(D_{\eta})_{S}$ is a monomorphism.
\item The unit $D\rightarrow D_{set}$ is an isomorphism.
\end{enumerate}
\end{enumerate}
\end{prop}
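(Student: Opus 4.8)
The plan is to prove (1) and (2) first and then to treat (3) and (4) each as a single web of implications assembled from three kinds of arguments: elementary filtered-colimit and separatedness manipulations among the pointwise conditions (a)--(e); stalk computations for the monomorphism and unit conditions (f)--(i); and one genuinely geometric gluing step. For (1), since epimorphisms in $\Shv_{G}$ are detected on stalks, I would fix $s\in S$ and a germ $d_{s}\in D_{s}$, lift it to $d\in D(U)$ over some $U\ni s$, and form $(U,\gamma)=(U,Gd)\in\mathscr{S}_{G}(D)$; smoothness of $D$ lets the preceding lemma produce a majorant $(U',\gamma')\in\mathscr{S}_{G}^{m}(D)$ with $U\subset U'$ and $\gamma=\gamma'\vert_{U}$, and since the stalk of $j_{U'!}(\gamma'_{U'})$ at $s$ is $\gamma'$, which surjects onto $d_{s}$, the coproduct map is a stalkwise surjection. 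Part (2) runs the same construction at the generic point, lifting a representative of $c\in\pi_{0}(D)=G\backslash D_{\eta}$ and passing to a maximal majorant $(U',\gamma')$ with $\gamma'_{\eta}=c$; alternatively it follows from (1) by applying the colimit-preserving functors $(-)_{\eta}$ and $G\backslash(-)$.

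For (3) I would first dispatch the pointwise conditions. The implications (a)$\Rightarrow$(b)$\Rightarrow$(c)$\Rightarrow$(a), (a)$\Rightarrow$(d)$\Rightarrow$(c), and (d)$\Leftrightarrow$(e) are routine: a germ-level identity of two elements of one orbit propagates to a common open by the filtered colimit defining the stalk; the maps to $D_{\eta}$ factor through any intermediate stalk $D_{s}$; and $D(U)\hookrightarrow\prod_{s\in U}D_{s}$ recovers a section-level identity from all its germs. Next, computing stalks, $D\to(D_{\eta})_{S}\times D_{G}$ is a monomorphism iff each $D_{s}\to D_{\eta}\times(G\backslash D_{s})$ is injective, which unwinds exactly to ``$D_{s}\to D_{\eta}$ is injective on orbits'', giving (d)$\Leftrightarrow$(h); and (h)$\Leftrightarrow$(i) because $D_{et}$ is by definition the image of that morphism, so the always-epimorphic unit $D\to D_{et}$ is an isomorphism iff the morphism is mono. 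Similarly, since the stalk of $j_{U!}(\gamma_{U})$ at $s\in U$ is $\gamma$, the morphism $j_{U!}(\gamma_{U})\to D$ is a monomorphism iff each orbit $\gamma\subset D(U)$ injects into every $D_{s}$, which is precisely (b); hence (b)$\Leftrightarrow$(f), and trivially (f)$\Rightarrow$(g). The loop is closed by (g)$\Rightarrow$(d): given $x_{s}\in D_{s}$ and $g\in G$ with $(gx_{s})_{\eta}=(x_{s})_{\eta}$, I lift $x_{s}$ to a section, take a maximal majorant $(U',\gamma')$, and read the hypothesis at the stalk over $\eta$, where (g) forces $\gamma'\to D_{\eta}$ to be injective; this yields $gd'=d'$ for the representative $d'\in\gamma'$ of $x_{s}$, hence $gx_{s}=x_{s}$.

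For (4) the pointwise chain (a)$\Leftrightarrow\cdots\Leftrightarrow$(e) is the verbatim argument of (3) with ``injective on orbits'' replaced by ``injective'', and the stalk computation gives (c)$\Leftrightarrow$(d)$\Leftrightarrow$(h) (now $D\to(D_{\eta})_{S}$ is mono iff $D_{s}\to D_{\eta}$ is injective) together with (h)$\Leftrightarrow$(i) via the image definition of $D_{set}$. The substantive content is the equivalence with the structural conditions (f) and (g). To get (c)$\Rightarrow$(f) I would show that injectivity forces $\mathscr{S}_{G}^{m}(D)\to\pi_{0}(D)$ to be injective: if $(U,\gamma)$ and $(U',\gamma')$ are maximal with $\gamma_{\eta}=\gamma'_{\eta}$, then on $W=U\cap U'$ injectivity of $D(W)\to D_{\eta}$ forces $\gamma\vert_{W}=\gamma'\vert_{W}$, so matching representatives glue over $U\cup U'$ to a single orbit dominating both, whence maximality gives $(U,\gamma)=(U',\gamma')$; combined with surjectivity from (2) this is the bijectivity in (f), while injectivity plainly implies the conditions of (3). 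For (f)$\Rightarrow$(g) I use the decomposition $D=\coprod_{c}D(c)$: bijectivity of $\mathscr{S}_{G}^{m}(D)\to\pi_{0}(D)$ makes the coproduct map a disjoint union over $c$ of single morphisms $j_{U!}(\gamma_{U})\to D(c)$, each a monomorphism by (3) and an epimorphism by (1) applied to the smooth subsheaf $D(c)$, hence an isomorphism in the topos $\Shv_{G}$. Finally (g)$\Rightarrow$(d) is immediate on stalks: under the isomorphism $D_{s}=\coprod_{(U,\gamma):\,s\in U}\gamma$ and $D_{\eta}=\coprod_{(U,\gamma)}\gamma$, and the generization map $D_{s}\to D_{\eta}$ is the inclusion of a sub-coproduct, which is injective.

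The main obstacle I anticipate is the gluing/maximality step in (4), namely (c)$\Rightarrow$``$\mathscr{S}_{G}^{m}(D)\to\pi_{0}(D)$ is injective'': one must argue that two maximal pairs over the same connected component have literally equal opens and orbits, which requires carefully gluing a single $G$-orbit across $U\cup U'$ from matching representatives (using the sheaf condition on the cover $\{U,U'\}$) and then invoking maximality twice. The bookkeeping between an orbit $\gamma$ and its restriction $\gamma\vert_{U}$, namely the image under $D(U')\to D(U)$, together with keeping track of which stalks see which summands, is where the argument is most delicate; elsewhere, apart from the minor care needed because the sections of a constant sheaf $\gamma_{U}$ over a disconnected open are not simply $\gamma$ (which I sidestep by arguing on stalks throughout), the reasoning is routine.
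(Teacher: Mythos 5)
Your proposal is correct and follows essentially the same route as the paper: parts $(1)$ and $(2)$ via maximal majorants supplied by the Zorn-lemma argument, and parts $(3)$--$(4)$ via the same web of implications, with the same three substantive steps (closing the loop from $(3,g)$ back to the pointwise conditions, the gluing argument for $(4,c)\Rightarrow(4,f)$, and the mono-plus-epi-equals-iso argument for $(4,f)\Rightarrow(4,g)$ using the decomposition $D=\coprod_{c}D(c)$). The only differences are cosmetic, e.g.\ arguing stalkwise where the paper argues on presheaf sections.
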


\begin{proof}
$(1)$ For $U\subset S$ and $d\in D(U)$ with $G$-orbit $\gamma$,
pick $(U',\gamma')\in\mathscr{S}_{G}^{m}(D)$ over $(U,\gamma)$.
Evaluating $j_{U'!}\gamma_{U'}^{\prime}\rightarrow D$ at $U$ gives
$\gamma^{\prime}\hookrightarrow D(U')\rightarrow D(U)$, whose image
equals $\gamma$, and so contains $d$: our morphism is already surjective
in $\PreShv$.

$(2)$ Let $c$ be a $G$-orbit in $D_{\eta}$, $d_{\eta}\in c$,
$U$ a small open of $S$ where $d_{\eta}$ lifts to $d\in D(U)$,
$\gamma$ the $G$-orbit of $d$, and $(U',\gamma')\in\mathscr{S}_{G}^{m}(D)$
over $(U,\gamma)$. Then $(U',\gamma')$ maps to $\gamma_{\eta}=c$
in $G\backslash D_{\eta}=\pi_{0}(D)$: our morphism $\mathscr{S}_{G}^{m}(D)\rightarrow\pi_{0}(D)$
is surjective.

In $(3)$ and $(4)$, the equivalence of conditions $(a)$ trough
$(e)$ are easy, $(d)\Leftrightarrow(h)$ since monomorphicity is
equivalent to injectivity on all stalks, and $(h)\Leftrightarrow(i)$
by definition of $D_{et}$ and $D_{set}$. Also: $(3,a)\Leftrightarrow(3,f)\Rightarrow(3,g)$
and $(4,g)\Rightarrow(4,a)$ are obvious. It remains to establish
the following three implications: 

$(3,g)\Rightarrow(3,a)$. Let $V\subset U$ be nonempty opens of $S$,
$\gamma$ a $G$-orbit in $D(U)$. Pick $(U',\gamma')\in\mathscr{S}_{G}^{m}(D)$
above $(U,\gamma)$. By $(3,g)$, $j_{U'!}\gamma_{U'}^{\prime}\rightarrow D$
is a monomorphism. Evaluating it at $V$ gives an injection $\gamma^{\prime}\hookrightarrow D(U')\rightarrow D(V)$,
which factors as 
\[
\gamma'\twoheadrightarrow\gamma\hookrightarrow D(U)\rightarrow D(V)
\]
So $\gamma'\rightarrow\gamma$ is a bijection and the restriction
map $D(U)\rightarrow D(V)$ is injective on $\gamma$. 

$(4,c)\Rightarrow(4,f)$. Plainly $(4,c)\Rightarrow(3,c)$, so $D$
satisfies all conditions of $(3)$. Suppose that $(U_{1},\gamma_{1})$
and $(U_{2},\gamma_{2})$ in $\mathscr{S}_{G}^{m}(D)$ have the same
image $\gamma_{\eta}$ in $\pi_{0}(D)=G\backslash D_{\eta}$. Fix
$x\in\gamma_{\eta}$ and lift it to $x_{i}\in\gamma_{i}$. By $(4,c)$
applied to $V=U_{1}\cap U_{2}\neq\emptyset$, $x_{1}\vert_{V}=x_{2}\vert_{V}$
in $D(V)$, so the $x_{i}$'s glue to $x\in D(U)$ where $U=U_{1}\cup U_{2}$.
Let $\gamma$ be the $G$-orbit of $x$. Then $(U_{i},\gamma_{i})\leq(U,\gamma)$
in $\mathscr{S}_{G}(D)$, whence $(U_{1},\gamma_{1})=(U,\gamma)=(U_{2},\gamma_{2})$
by maximality of $(U_{i},\gamma_{i})$. Given $(2)$, this proves
$(4,f)$.

$(4,f)\Rightarrow(4,g)$. Using $(1)$ and the decomposition $D=\coprod_{c\in\pi_{0}(D)}D(c)$,
it is sufficient to establish that for any $c\in\pi_{0}(D)$, the
morphism
\[
\coprod_{(U,\gamma)\in\mathscr{S}_{G}^{m}(D,c)}j_{U!}(\gamma_{U})\rightarrow D(c)
\]
is a monomorphism. The second part of $(4,f)$ tells us that $\mathscr{S}_{G}^{m}(D,c)$
contains a single element $(U,\gamma)$, and the first part of $(4,f)$
tells us that $(3,f)$ holds, which implies that $j_{U!}(\gamma_{U})\rightarrow D(c)$
is indeed a monomorphism. 
\end{proof}
\begin{rem}
If $D\in\Shv_{G}^{\star}$, we may replace $(D_{\eta})_{S}$ by $(D_{\eta})_{S}^{\star}$
in $(3,h)$ and $(4,h)$, and
\[
(3,i)\iff D\in\Shv_{G}^{et},\qquad(4,i)\iff D\in\Shv_{G}^{set}.
\]
\end{rem}

\begin{lem}
\label{lem:PropertiesOfet}For $D\in\Shv_{G}^{et}(S_{Zar})$ and \emph{any}
open $U$ of $S$, 
\[
D(U)\text{ is a smooth }G\text{-set with trivial action of }I(U).
\]
\end{lem}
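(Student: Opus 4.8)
The plan is to realise an $\Shv_G^{et}$-object inside a product where both assertions are transparent, and then transport them to sub-$G$-sets. Since $D\in\Shv_G^{et}$ the unit $D\to D_{et}$ is an isomorphism, and by the definition of $D_{et}$ in \S\ref{subsec:Et/Set-sheaves} (equivalently, by the equivalences in part $(3)$ of Proposition~\ref{prop:CaractOfsetAndet} together with the remark following it), the product of units
\[
D\longrightarrow (D_\eta)_S^\star\times D_G
\]
is a monomorphism; being a monomorphism, it is injective on sections over every open $U\subseteq S$. This gives a $G$-equivariant injection $D(U)\hookrightarrow (D_\eta)_S^\star(U)\times D_G(U)$. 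As the two properties ``smooth $G$-set'' and ``$I(U)$ acts trivially'' are inherited by products and by $G$-subsets, it is enough to show that each of $(D_\eta)_S^\star(U)$ and $D_G(U)$ is a smooth $G$-set on which $I(U)$ acts trivially.

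For $D_G(U)$ this is clear: by \S\ref{subsec:SheavesNoAction} the sheaf $D_G$ carries the trivial $G$-action, so $D_G(U)$ is a $G$-set with trivial action, hence smooth and fixed by $I(U)$. For $(D_\eta)_S^\star(U)$ I would dispose of $U=\emptyset$ first (then $D(\emptyset)$ is a singleton and $I(\emptyset)=\{1\}$, so there is nothing to prove), and for $U\neq\emptyset$ argue as follows. Irreducibility of $S$ makes $U$ irreducible, hence connected, so the constant sheaf satisfies $(D_\eta)_S(U)=D_\eta$; then, exactly as in the computation in the proof of Lemma~\ref{lem:EssImageOf*S} --- invoking Proposition~\ref{prop:LinkI(s)andI(U)}$(2)$ and the smoothness of $D_\eta$, which holds because $D\in\Shv_G^{et}\subseteq\Shv_G^{sm}$ --- one gets $(D_\eta)_S^\star(U)=D_\eta^{I(U)}$. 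Since $I(U)=\ker(G\twoheadrightarrow\pi(U))$ is normal in $G$, the set $D_\eta^{I(U)}$ is a $G$-stable subset of the smooth $G$-set $D_\eta$, hence itself a smooth $G$-set, and $I(U)$ acts trivially on it by construction. Putting the two factors together finishes the argument.

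I do not expect a genuine obstacle here; the only point worth flagging is that one wants the statement for \emph{all} opens $U$, whereas $D\in\Shv_G^{sm}$ alone gives smoothness of $D(U)$ only for quasi-compact $U$. That gap is precisely what the embedding into $(D_\eta)_S^\star\times D_G$ closes, since the identity $(D_\eta)_S^\star(U)=D_\eta^{I(U)}$ produces, for every open $U$, a smooth ambient $G$-set with trivial $I(U)$-action into which $D(U)$ embeds $G$-equivariantly. The whole proof is thus a matter of assembling the structural facts recorded in \S\ref{subsec:ConstantSheaves}--\S\ref{subsec:CharactOfEtSetSheaves}.
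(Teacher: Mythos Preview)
Your proof is correct but takes a different route from the paper's. You use the global embedding $D\hookrightarrow (D_\eta)_S^\star\times D_G$ (characterisation $(3,h)$ of Proposition~\ref{prop:CaractOfsetAndet}) and read off both properties from the explicit description $(D_\eta)_S^\star(U)=D_\eta^{I(U)}$ of the target. The paper instead uses the pointwise characterisation $(3,b)$: for each $s\in U$ the map $D(U)\to D_s$ is injective on orbits, hence preserves stabilisers, so $G_d=G_{d_s}$ is open and contains $I(s)$; since this holds for all $s\in U$ and $G_d$ is closed, Proposition~\ref{prop:LinkI(s)andI(U)}(2) gives $I(U)\subset G_d$. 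Both arguments ultimately appeal to Proposition~\ref{prop:LinkI(s)andI(U)} --- yours indirectly through the computation in Lemma~\ref{lem:EssImageOf*S}, the paper's directly. The paper's version is a touch more self-contained (it does not route through the identification of $(D_\eta)_S^\star(U)$), while yours makes the mechanism clearer by exhibiting an explicit ambient smooth $G$-set with trivial $I(U)$-action for every open $U$ simultaneously.
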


\begin{proof}
If $U=\emptyset$, $D(U)=\star$ with trivial action. If $U\neq\emptyset$
and $s\in U$, the localization $D(U)\rightarrow D_{s}$ is $G$-equivariant,
injective on orbits, and $D_{s}$ is smooth, with trivial action of
$I(s)$. It follows that $D(U)$ is smooth, with trivial action of
$I(s)$ for all $s\in U$, hence with trivial action of $I(U)$ by
proposition~\ref{prop:LinkI(s)andI(U)}. 
\end{proof}

\section{Harvest}

\subsection{~}

We now assume all of the above assumptions:
\[
S\text{\,is locally henselian, geometrically unibranch, and irreducible.}
\]
We thus have equivalence of categories
\begin{equation}
\xyR{1pt}\xymatrix{\AlgSp_{et}(S)\ar[r]^{\alpha} & \Shv(S_{et})\ar[r]\sp(0.47){\beta} & \Shv(S_{fet})\ar[r]\sp(0.45){\gamma} & \Shv_{G}^{\star}(S_{Zar})\\
A\ar@{|->}[r] & B\ar@{|->}[r] & C\ar@{|->}[r] & D
}
\label{eq:ABCDdiagram}
\end{equation}
where $\gamma=\gamma_{3}\circ\gamma_{2}\circ\gamma_{1}$ and $G=\pi_{1}(\eta,\overline{\eta})$.
We set $\delta=\gamma\circ\beta\circ\alpha$. In the sequel, we may
often simplify our notations to $\AlgSp_{et}=\AlgSp_{et}(S)$, $\Shv_{G}^{\star}=\Shv_{G}^{\star}(S_{Zar})$,
etc\ldots{}

\subsection{\protect\label{subsec:MonoEpiInAlgSpet}Monomorphisms and epimorphisms}

Let $a:A_{1}\rightarrow A_{2}$ be a morphism in $\AlgSp_{et}$ with
image $d:D_{1}\rightarrow D_{2}$ in $\Shv_{G}^{\star}$. Since $\delta$
is an equivalence of categories
\[
\begin{array}{rcl}
a\text{\,is a monomorphism} & \iff & d\text{ is a monomorphism}\\
a\text{ is an epimorphism} & \iff & d\text{ is an epimorphism}
\end{array}
\]
We have described monomorphisms and epimorphisms of $\Shv_{G}^{\star}$
in section~\ref{subsec:MonoEpisInShv*}.

In $\AlgSp_{et}$, monomorphisms are open immersions. Indeed since
the embedding of $\AlgSp_{et}$ into $\AlgSp$ commutes with fiber
products, $a$ is a monomorphism in $\AlgSp_{et}$ if and only if
it is a monomorphism in $\AlgSp$. Since $A_{1}$ and $A_{2}$ are
etale over $S$, $a$ is an etale morphism by~\cite[\href{https://stacks.math.columbia.edu/tag/03FV}{Tag 03FV}]{SP},
so $a$ is a monomorphism if and only if it is an open immersion by~\cite[\href{https://stacks.math.columbia.edu/tag/05W5}{Tag 05W5}]{SP}.
We thus obtain:
\[
a\text{ is an open immersion}\iff d\text{ is a monomorphism of presheaves.}
\]

In $\AlgSp_{et}$, $a$ is an epimorphism if and only if $a$ is surjective.
Indeed by~\cite[\href{https://stacks.math.columbia.edu/tag/03MF}{Tag 03MF}]{SP},
$a$ is surjective if and only if there is a commutative diagram
\[
\xyR{2pc}\xymatrix{A_{1}^{\prime}\ar[r]\ar[d] & A_{2}^{\prime}\ar[d]\\
A_{1}\ar[r] & A_{2}
}
\]
with $A_{i}^{\prime}\in S_{et}$, $A_{1}^{\prime}\rightarrow A_{2}^{\prime}$
surjective, and $A_{1}^{\prime}\rightarrow A_{1}$ (representable)
surjective and etale. As observed in the proof of proposition~\ref{prop:FromAlgSp2EtShv},
$A_{i}^{\prime}\rightarrow A_{i}$ and $A_{1}^{\prime}\rightarrow A_{2}^{\prime}$
are then epimorphisms in $\Shv((\Sch/S)_{et}))$, thus also in $\Shv((\Sch/S)_{fppf})$,
$\AlgSp(S)$ and $\AlgSp_{et}(S)$, so $a$ is an epimorphism. Conversely
if $a$ is an epimorphism in $\AlgSp_{et}$, let $A_{2}^{\prime}\rightarrow A_{2}$
and $A_{1}^{\prime}\rightarrow A_{1}\times_{A_{2}}A_{2}^{\prime}$
be surjective etale morphisms with $A_{i}^{\prime}\in S_{et}$. Then
$A_{1}^{\prime}\rightarrow A_{1}$ is also surjective etale, and we
want to show that $A_{1}^{\prime}\rightarrow A_{2}^{\prime}$ is surjective.
We have just seen that $A_{1}^{\prime}\rightarrow A_{1}\times_{A_{2}}A_{2}^{\prime}$
is an epimorphism in $\AlgSp_{et}$; since $a$ is an epimorphism,
so is its base change $A_{1}\times_{A_{2}}A'_{2}\rightarrow A_{2}^{\prime}$
in the topos $\AlgSp_{et}(S)\simeq\Shv(S_{et})$, thus $A_{1}^{\prime}\rightarrow A_{2}^{\prime}$
is also an epimorphism in $\AlgSp_{et}$. We are reduced to: if a
morphism $a:A_{1}\rightarrow A_{2}$ is $S_{et}$ is an epimorphism
in $\AlgSp_{et}$, it is surjective as a morphism of schemes. This
is obvious: glue two copies of $A_{2}$ along the (open!) image of
$a$ to obtain a new scheme $A_{3}\in S_{et}$, equipped with two
morphisms $b_{1},b_{2}:A_{2}\rightarrow A_{3}$ such that $b_{1}\circ a=b_{2}\circ a$
in $S_{et}\subset\AlgSp_{et}$. Since $a$ is an epimorphism in $\AlgSp_{et}$,
$b_{1}=b_{2}$, therefore $a$ is surjective. So:
\[
a\text{ is surjective}\iff d\text{ is an epimorphism of sheaves.}
\]

\subsection{Representable etale algebraic spaces}
\begin{lem}
\label{lem:DeltaOnFet}For $X\in S_{fet}$ with image $U$ in $S$
viewed as an element of $\AlgSp_{et}$, 
\[
\delta(X)=j_{U!}(X(\overline{\eta})_{U})\quad\text{in}\quad\Shv_{G}.
\]
\end{lem}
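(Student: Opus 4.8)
The plan is to unwind $\delta=\gamma_{3}\circ\gamma_{2}\circ\gamma_{1}\circ\beta\circ\alpha$ applied to $X$ and to recognize the resulting presheaf of $G$-sets on $S_{Zar}$. By Remark~\ref{rem:RepAlgSpIsRepEtale}, $\alpha(X)$ is the representable sheaf $h_{X}=\Hom_{S_{et}}(-,X)$, so $\beta\alpha(X)=\Hom_{S_{fet}}(-,X)$, and $\gamma_{1}\beta\alpha(X)$ is the Zariski $\Fet$-sheaf whose component over a nonempty open $V\subset S$ is $B_{V}\in\Shv(\Fet_{V})$ with $B_{V}(Y)=\Hom_{S}(Y,X)$ for $Y\in\Fet_{V}$. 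Applying $\gamma_{2}$ yields the presheaf of $G$-sets $C$ with $C(\emptyset)=\{\star\}$ and
\[
C(V)=B_{V}(\overline{\eta})=\underrightarrow{\lim}_{(Y,y)\in\Fet_{V}^{c}(\overline{\eta})}\Hom_{S}(Y,X)\qquad(V\neq\emptyset),
\]
and then $\delta(X)=\gamma_{3}(C)=a(C)$ is the sheafification of $C$.

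Next I would compute $C(V)$ in two cases. If $\emptyset\neq V\subset U$, every $Y\in\Fet_{V}$ lies over $V\subset U$, so an $S$-morphism $Y\to X$ is the same thing as a $V$-morphism $Y\to X\times_{U}V$; hence $B_{V}$ is represented by $X\times_{U}V\in\Fet_{V}$ and $C(V)=(X\times_{U}V)(\overline{\eta})=X(\overline{\eta})$ canonically, since $\overline{\eta}$ lies over $\eta\in V$. By compatibility of the fiber functors for $V\subset U$, the $\pi(V)$-action here is the pullback of the $\pi(U)$-action along $\pi(V)\to\pi(U)$, so as a $G$-set $C(V)$ is $X(\overline{\eta})$ with $G$ acting through $G\twoheadrightarrow\pi(U)$; in particular $I(U)$, and hence $I(V)$, acts trivially. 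If $V\neq\emptyset$ but $V\not\subset U$, pick $v\in V\setminus U$: for any $(Y,y)$ in $\Fet_{V}^{c}(\overline{\eta})$ the morphism $Y\to V$ is surjective --- its image is open (as $Y\to V$ is etale), closed (as $Y\to V$ is finite) and nonempty (it contains the image of $y$) --- so $Y$ has a point over $v$, and an $S$-morphism $Y\to X$ would force the image of $Y$ in $S$ into $U$, which is impossible. Thus $\Hom_{S}(Y,X)=\emptyset$ for every object of the nonempty category $\Fet_{V}^{c}(\overline{\eta})$, and $C(V)=\emptyset$.

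Since $S$ is irreducible, every nonempty open is connected, so the constant sheaf $X(\overline{\eta})_{U}$ on $U_{Zar}$ takes the value $X(\overline{\eta})$ on every nonempty $V\subset U$; comparing with the two cases above (and noting both presheaves take the value $\{\star\}$ on $\emptyset$ and $\emptyset$ on every nonempty $V\not\subset U$), the presheaf $C$ is canonically identified with $j_{U!}(X(\overline{\eta})_{U})$. Now $X(\overline{\eta})_{U}$ lies in $\Shv_{G}^{\star}(U_{Zar})$ --- it is a finite, hence smooth, $G$-set on which $I(U)$, a fortiori each $I(s)$ for $s\in U$, acts trivially --- so $j_{U!}(X(\overline{\eta})_{U})$ lies in $\Shv_{G}^{\star}$, in particular is a Zariski sheaf, by the discussion of the adjunction~(\ref{eq:AdjjU!jU*}). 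Hence $a(C)=C$, and $\delta(X)=j_{U!}(X(\overline{\eta})_{U})$ in $\Shv_{G}$.

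The bookkeeping through the chain of equivalences is routine; the one step carrying real content is the vanishing $C(V)=B_{V}(\overline{\eta})=\emptyset$ for $V\not\subset U$, which rests on the surjectivity of connected finite etale covers of the irreducible scheme $V$, together with the easy but necessary observation that $C$ requires no nontrivial sheafification.
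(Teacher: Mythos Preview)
Your proof is correct and follows essentially the same approach as the paper's own proof: unwind the chain $\gamma_{3}\circ\gamma_{2}\circ\gamma_{1}\circ\beta\circ\alpha$, compute the resulting presheaf $C$ on $S_{Zar}$ in the two cases $V\subset U$ and $V\not\subset U$ using surjectivity of connected finite etale covers over an irreducible base, and observe that $C$ is already a sheaf equal to $j_{U!}(X(\overline{\eta})_{U})$. Your write-up is slightly more explicit than the paper's in justifying why no sheafification is needed, but the argument is the same.
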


\begin{proof}
Set $A=\Hom_{S}(-,X)$ and $D=\delta(A)$. Unwinding the definitions,
$D$ is the Zariski sheafification of the presheaf which takes $V\in S_{Zar}$
to the $G$-set with trivial action of $I(V)$ corresponding to the
smooth $\pi(V)$-set defined by
\begin{align*}
B_{V}(\overline{\eta}) & =\underrightarrow{\lim}_{(Y,y)\in\Fet_{V}(\overline{\eta})}B_{V}(Y)=\underrightarrow{\lim}_{(Y,y)\in\Fet_{V}(\overline{\eta})}\Hom_{V}(Y,X_{V}).
\end{align*}
Since $S$ is irreducible, so are the nonempty $V$'s, thus for any
$(Y,y)\in\Fet_{V}(\overline{\eta})$, the nonempty clopen image of
the finite etale map $Y\rightarrow V$ equals $V$, i.e.~$Y\twoheadrightarrow V$
is surjective. Hence $B_{V}(\overline{\eta})=\emptyset$ when $V$
is not contained in the image $U$ of $X$ in $S$. For $V\subset U$,
$X_{V}\in\Fet_{V}$ and the map which takes $s\in\Hom_{V}(Y,X_{V})$
to $s(y)\in X(\overline{\eta})$ induces a $G$-equivariant isomorphism
$B_{V}(\overline{\eta})\rightarrow X(\overline{\eta})$ of smooth
$G$-sets. It follows that our presheaf is already a sheaf, namely
$D=j_{U!}(X(\overline{\eta})_{U})$. 
\end{proof}
\begin{cor}
There is a $2$-commutative diagram 
\[
\xyR{1.5pc}\xyC{2pc}\xymatrix{\Fet_{U}\ar[r]\sp(0.45){(-)(\overline{\eta})}\ar@{_{(}->}[d] & \Set_{\pi(U)}^{fsm}\ar[r]\sp(0.45){(-)_{U}} & \Shv_{G}^{set}(U)\ar[d]^{j_{U!}}\\
S_{fet}\ar@{_{(}->}[d]\ar[rr]^{\delta} &  & \Shv_{G}^{set}(S)\ar@{^{(}->}[d]\\
\AlgSp_{et}(S)\ar[rr]^{\delta} &  & \Shv_{G}^{\star}(S)
}
\]
\end{cor}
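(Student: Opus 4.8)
The plan is to recognise this corollary as a repackaging of Lemma~\ref{lem:DeltaOnFet}, together with the routine check that the two functors in the top row take values in the indicated subcategories. First I would deal with the left-hand column: a finite etale $U$-scheme is in particular etale and separated over $S$, so it is fet over $S$, and, viewed as an algebraic space, an object of $\AlgSp_{et}(S)$; thus both left vertical arrows are the evident fully faithful inclusions and the left column is unambiguous.

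For the top square, fix $X\in\Fet_U$. I claim the image of the structure morphism $X\to S$ is exactly $U$: if $X=\emptyset$ everything in sight is the initial object and there is nothing to prove, and otherwise $S$, hence the nonempty open $U$, is irreducible, so the finite etale morphism $X\to U$ has nonempty clopen image, necessarily $U$, and composing with $U\hookrightarrow S$ gives image $U$ in $S$. Lemma~\ref{lem:DeltaOnFet} then supplies a natural isomorphism $\delta(X)\simeq j_{U!}\bigl(X(\overline{\eta})_U\bigr)$ in $\Shv_G$, where $X(\overline{\eta})$ is the finite smooth $\pi(U)$-set attached to $X$ by the fiber-functor equivalence $\Fet_U\simeq\Set_{\pi(U)}^{fsm}$ of section~5 and $(-)_U$ is the constant-sheaf functor on $U_{Zar}$. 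Since the composite $\Fet_U\to\Set_{\pi(U)}^{fsm}\to\Shv_G^{set}(U)\to\Shv_G^{set}(S)$ sends $X$ to $j_{U!}\bigl(X(\overline{\eta})_U\bigr)$, this is precisely the $2$-commutativity of the top square; the isomorphism of Lemma~\ref{lem:DeltaOnFet} is assembled from the map sending a $V$-morphism $Y\to X_V$ to its value at the chosen geometric point, which is natural in $X$, so the resulting $2$-cell is natural.

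It remains to check the codomains. For $j_{U!}$: by the discussion of opens in section~7 it preserves $\Shv_G^\star$ and commutes with $(-)_\eta$, and for $E\in\Shv_G^{set}(U)$ one has $(j_{U!}E)_s=E_s$ for $s\in U$, $(j_{U!}E)_s=\emptyset$ for $s\notin U$, and $(j_{U!}E)_\eta=E_\eta$; hence every localization of $j_{U!}E$ is injective and $j_{U!}E\in\Shv_G^{set}(S)$ by Proposition~\ref{prop:CaractOfsetAndet}(4). For $(-)_U$: the open subscheme $U$ is again irreducible, geometrically unibranch and locally henselian, with the same generic point $\eta$ and fundamental group $G$, so all of sections~5--7 apply with base $U$, and for $s\in U$ the inertia group $I(s)$ is the same whether computed over $U$ or over $S$, since $\Spec(\mathcal{O}_{U,s})=\Spec(\mathcal{O}_{S,s})$. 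Now, every nonempty open of $U$ being irreducible, the constant presheaf with value a finite smooth $\pi(U)$-set $Y$ (and value $\{\star\}$ on $\emptyset$) is already a sheaf $Y_U$ with all stalks over points of $U$ equal to $Y$; as $Y$ is a smooth $G$-set on which $I(U)$, hence each $I(s)$ with $s\in U$, acts trivially (Proposition~\ref{prop:LinkI(s)andI(U)}), Proposition~\ref{prop:FromPiShv2GShv} gives $Y_U\in\Shv_G^\star(U)$, and the localizations $(Y_U)_s\to(Y_U)_\eta$ are identities, hence injective, so $Y_U\in\Shv_G^{set}(U)$ and $(-)_U:\Set_{\pi(U)}^{fsm}\to\Shv_G^{set}(U)$ is well defined.

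Finally, for the bottom square I would combine these: given any $X\in S_{fet}$ with image $U$ in $S$ (so $X\in\Fet_U$), Lemma~\ref{lem:DeltaOnFet} and the previous paragraph give $\delta(X)\simeq j_{U!}\bigl(X(\overline{\eta})_U\bigr)\in\Shv_G^{set}(S)$; hence $\delta$ restricts to a functor $S_{fet}\to\Shv_G^{set}(S)$, compatible by construction with the inclusions $S_{fet}\hookrightarrow\AlgSp_{et}(S)$ and $\Shv_G^{set}(S)\hookrightarrow\Shv_G^\star(S)$, so the lower square commutes strictly. The only genuine input is Lemma~\ref{lem:DeltaOnFet}; I expect the (minor) main obstacle to be the bookkeeping of domains and codomains, in particular observing that $U$ inherits all the standing hypotheses and that the inertia groups $I(s)$, $s\in U$, are unchanged when the base is shrunk from $S$ to $U$.
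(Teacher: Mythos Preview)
Your proof is correct and follows essentially the same approach as the paper: both identify Lemma~\ref{lem:DeltaOnFet} as the only real input and then verify that the functors in the top row land in the indicated subcategories, deducing the middle row from the observation that every $X\in S_{fet}$ lies in $\Fet_U$ for $U$ its image. Your write-up is more thorough than the paper's in justifying the codomains (checking that $U$ inherits the standing hypotheses, that $I(s)$ is unchanged, and that $j_{U!}$ preserves $\Shv_G^{set}$), but the architecture is the same.
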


\begin{proof}
For $X\in\Fet_{U}$, $Y=X(\overline{\eta})$ belongs to $\Set_{\pi(U)}^{fsm}$;
for $Y\in\Set_{\pi(U)}^{sm}$, the constant $G$-sheaf $Z=Y_{U}$
belongs to $\Shv_{G}^{set}(U)$; for $Z\in\Shv_{G}^{set}(U)$, the
extension $D=j_{U!}(Z)$ belongs to $\Shv_{G}^{set}(S)$. For $X,Y,Z,D$
as indicated, we have seen that $D=\delta(X)$. This gives the $2$-commutativity
of the outer diagram. Since any $X\in S_{fet}$ belongs to $\Fet_{U}$
for some $U$ (the image of $X$ in $S$), we also have $\delta(S_{fet})\subset\Shv_{G}^{set}(S)$.
\end{proof}
\begin{thm}
\label{thm:Main}The functor $\delta$ induces equivalence of categories
\[
\xyR{1.5pc}\xyC{3pc}\xymatrix{S_{set}\ar[r]^{\delta}\ar@{_{(}->}[d]_{\mathrm{yon}} & \Shv_{G}^{set}\ar@{^{(}->}[d]^{\mathrm{inc}}\\
S_{et}\ar[r]^{\delta}\ar@{_{(}->}[d]_{\mathrm{yon}} & \Shv_{G}^{et}\ar@{^{(}->}[d]^{\mathrm{inc}}\\
\AlgSp_{et}\ar[r]^{\delta} & \Shv_{G}^{\star}
}
\]
where $S_{set}$ is the strictly full subcategory of separated $S$-schemes
in $S_{et}.$
\end{thm}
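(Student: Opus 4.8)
The plan is to establish Theorem~\ref{thm:Main} as two successive restrictions of the global equivalence $\delta:\AlgSp_{et}\to\Shv_G^\star$ recorded in diagram~(\ref{eq:ABCDdiagram}). Since $\delta$ is already known to be an equivalence, it suffices to show that it carries $S_{et}$ (the representable objects, via the Yoneda embedding) bijectively onto $\Shv_G^{et}$, and $S_{set}$ onto $\Shv_G^{set}$, at the level of objects; full faithfulness on these subcategories is then inherited for free from the global equivalence. First I would fix notation: for $A\in\AlgSp_{et}$ with image $D=\delta(A)\in\Shv_G^\star$, recall from Remark~\ref{rem:RepAlgSpIsRepEtale} that $A$ is a scheme (i.e.~lies in $S_{et}$ under Yoneda) precisely when the corresponding $B=\alpha(A)$ is representable, and from Corollary~\ref{cor:FetShevRepbyEt} that this in turn is controlled by $\beta(B)$ being a union of representable subpresheaves on $S_{fet}$.

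\textbf{The $et$ row.}
The key translation is Lemma~\ref{lem:DeltaOnFet}: an object $X\in S_{fet}$ with image $U$ maps to $\delta(X)=j_{U!}(X(\overline\eta)_U)$, which is exactly an object of the form appearing in the morphisms $j_{U!}(\gamma_U)\to D$ studied in subsection~\ref{subsec:CharactOfEtSetSheaves}. I would argue that $D$ lies in the essential image of $\delta\vert_{S_{et}}$ iff $D\in\Shv_G^{et}$ by combining Corollary~\ref{cor:FetShevRepbyEt} with the characterization in Proposition~\ref{prop:CaractOfsetAndet}(3). Concretely: if $A\simeq X\in S_{et}$, then $\beta(B)$ is a union of representables by the $\Rightarrow$ direction of Corollary~\ref{cor:FetShevRepbyEt}, each summand being of the shape in Lemma~\ref{lem:DeltaOnFet}; tracing through $\gamma$, this realizes $D$ as a union of $j_{U!}(\gamma_U)$'s with each $j_{U!}(\gamma_U)\to D$ a monomorphism, which is condition $(3,g)$, hence $D\simeq D_{et}$ by $(3,i)$. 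Conversely, if $D\in\Shv_G^{et}$, then by Proposition~\ref{prop:CaractOfsetAndet}(1)--(2) the epimorphism $\coprod_{(U,\gamma)\in\mathscr{S}_G^m(D)}j_{U!}(\gamma_U)\to D$ together with condition $(3,g)$ exhibits $D$ as a union of representable subsheaves (each $j_{U!}(\gamma_U)=\delta(X)$ for a suitable $X\in\Fet_U$); pulling this back through $\gamma^{-1},\beta^{-1}$ and invoking the $\Leftarrow$ direction of Corollary~\ref{cor:FetShevRepbyEt} shows $B$ is representable, so $A\in S_{et}$ by Remark~\ref{rem:RepAlgSpIsRepEtale}.

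\textbf{The $set$ row.}
For the separated case I would use the extra clause in Proposition~\ref{prop:CaractOfsetAndet}(4), namely that $D\in\Shv_G^{set}$ iff $D\simeq D_{set}$ iff the unit $D\to(D_\eta)_S$ is a monomorphism. Under the equivalence, I expect separatedness of the representing scheme $X\in S_{et}$ to match the injectivity condition $(4,a)$: the valuative criterion of separatedness (already invoked in the Introduction, giving injectivity of the localization map on actual points) should translate, via the stalk computation of Proposition~\ref{prop:CompStalks4Alpha} identifying $B_{\overline s}\simeq A(\overline s)$, into the statement that each $D(U)\to D(V)$ is injective (not merely injective on orbits). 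Thus $X$ is separated iff $D$ satisfies condition $(4)$, i.e.~$D\in\Shv_G^{set}$; the diagram then commutes by construction.

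\textbf{Anticipated main obstacle.}
The hard part will be the precise bookkeeping in the $set$ row, matching scheme-theoretic separatedness of $X$ with the sheaf-theoretic injectivity in Proposition~\ref{prop:CaractOfsetAndet}(4). One direction uses the valuative criterion as above; the reverse—that injectivity of all $D_s\to D_\eta$ forces the diagonal of $X$ to be a closed immersion—requires translating the ``single maximal element over each $c\in\pi_0(D)$'' condition $(4,f)$ into the statement that $X$ does not self-glue along a strictly smaller open, which is where the geometrically unibranch and locally henselian hypotheses (through Propositions~\ref{prop:LinkI(s)andI(U)} and~\ref{prop:IntegralOverLocHensIsHomeo}) must be used to rule out the pathology exhibited in the non-henselian Example after Corollary~\ref{cor:BetaEquiv}. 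I would handle this by reducing, via Proposition~\ref{prop:RepCrit4et}, to checking the separatedness condition on the gluing data $X=\cup X_i$ coming from the maximal elements $\mathscr{S}_G^m(D)$, where the bijectivity of $\mathscr{S}_G^m(D)\to\pi_0(D)$ in $(4,f)$ guarantees that the $X_i$ overlap correctly rather than producing a non-separated double.
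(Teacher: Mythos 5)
Your overall architecture coincides with the paper's: restrict the global equivalence $\delta$, translate representability through Remark~\ref{rem:RepAlgSpIsRepEtale}, Corollary~\ref{cor:FetShevRepbyEt} and Lemma~\ref{lem:DeltaOnFet}, and match the two subcategories against the characterizations of Proposition~\ref{prop:CaractOfsetAndet}. The $et$ row is essentially right, with one slip: exhibiting $D$ as a union of monomorphic subsheaves $j_{U_i!}(\gamma_{i,U_i})\hookrightarrow D$ is not literally condition $(3,g)$, which quantifies over \emph{all} maximal elements of $\mathscr{S}_G(D)$, not over your particular covering family. The fix is cheap: check the stalk condition $(3,d)$ directly, or observe (as the paper does) that $\Shv_G^{et}$ is stable under arbitrary unions and that $\delta(S_{fet})\subset\Shv_G^{set}\subset\Shv_G^{et}$. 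You also silently need Lemma~\ref{lem:PropertiesOfet} to know that each orbit $\gamma$ is a \emph{finite} smooth $\pi(U)$-set, hence of the form $X(\overline{\eta})$ for some $X\in\Fet_U$.

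The genuine gap is in the $set$ row, in the direction ``$X$ separated $\Rightarrow D\in\Shv_G^{set}$''. The valuative criterion of separatedness does not apply here: for a general irreducible, locally henselian, geometrically unibranch $S$, the local rings $\mathcal{O}_{S,s}$ and their strict henselizations are not valuation rings (the Introduction could invoke the criterion only because $S$ was there the spectrum of a henselian DVR). What is actually needed, and what the paper proves, is that two sections of a separated \emph{\'{e}tale} $X$ which agree generically must coincide, because the diagonal of $X/S$ is a \emph{clopen} immersion: the paper realizes a $G$-orbit $\gamma\subset D(U)$ as $Y(\overline{\eta})$ for a connected $Y\in\Fet_U$, pulls back the diagonal along $(j_1,j_2):Y\rightarrow X\times_S X$ to obtain a clopen $Z\subset Y$ which is again finite \'{e}tale over $U$, and concludes from $Z(\overline{\eta})=\gamma$ that the two sections of $D$ over $U$ agree. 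Your anticipated difficulty is also inverted: the converse direction is the easy one, since $(4,g)$ exhibits $D$ as a \emph{disjoint} union of the $j_{U!}(\gamma_U)\simeq\delta(X_{(U,\gamma)})$ with $X_{(U,\gamma)}$ finite \'{e}tale over an open of $S$, so $A$ is represented by a disjoint union of fet $S$-schemes, which is automatically separated; no delicate analysis of gluing data or self-gluing pathologies is required there.
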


\begin{proof}
Let $A,B,C,D$ be as in \ref{eq:ABCDdiagram}. We have
\begin{align*}
 & A\in\AlgSp_{et}(S)\text{ is representable (by some }X\in S_{et})\\
\iff & B\in\Shv(S_{et})\text{ is representable (by the same \ensuremath{X\in S_{et}})}\\
\iff & C=\cup C_{i}\text{ in }\Shv(S_{fet})\text{\,with }C_{i}\,\text{representable (by some }X_{i}\in S_{fet})\\
\iff & D=\cup D_{i}\text{ in }\Shv_{G}^{\star}(S_{Zar})\,\text{with }D_{i}\simeq\delta(X_{i})\text{ for some }X_{i}\in S_{fet}
\end{align*}
using remark~\ref{rem:RepAlgSpIsRepEtale} and corollary~\ref{cor:FetShevRepbyEt}.
We have seen that $\delta$ maps $S_{fet}$ to $\Shv_{G}^{set}\subset\Shv_{G}^{et}$.
Since $\Shv_{G}^{et}$ is stable under arbitrary unions, we thus find
that $\delta$ maps representable $A$'s to $D$'s in $\Shv_{G}^{et}$.
Suppose conversely that $D\in\Shv_{G}^{et}$. Then by proposition~\ref{prop:CaractOfsetAndet},
\[
D=\cup_{(U,\gamma)\in\mathscr{S}_{G}^{m}(D)}D_{(U,\gamma)}\quad\text{in}\quad\Shv_{G}^{\star}
\]
where $D_{(U,\gamma)}=j_{U!}(\gamma_{U})$ with $\gamma\in\Set_{\pi(U)}^{fsm}$
by lemma~\ref{lem:PropertiesOfet}, whence 
\[
D_{(U,\gamma)}\simeq\delta(X_{(U,\gamma)})
\]
 for an $S$-scheme $X_{(U,\gamma)}\in\Fet_{U}\subset S_{fet}$ with
$X_{(U,\gamma)}(\overline{\eta})\simeq\gamma$ as $G$-sets. Thus
$D$ satisfies the last displayed property, and $A$ is representable
by some $X\in S_{et}$ which is a union of the $X_{(U,\gamma)}$'s.
If moreover $D\in\Shv_{G}^{set}$, then in fact 
\[
D={\textstyle \coprod_{(U,\gamma)\in\mathscr{S}_{G}^{m}(D)}}D_{(U,\gamma)}\quad\text{in}\quad\Shv_{G}^{\star}
\]
so $A$ is representable by $X=\coprod_{(U,\gamma)}X_{(U,\gamma)}$,
which is indeed separated over $S$. 

Finally, suppose that $A=\Hom_{S}(-,X)$ for some $X\in S_{set}$.
We have to show that $D\in\Shv_{G}^{set}$, and we use the characterization~$(4,c)$
of proposition~\ref{prop:CaractOfsetAndet}. So let $U\neq\emptyset$
be open in $S$, and suppose that $d_{1},d_{2}\in D(U)$ have the
same image $d_{1,\eta}=d_{2,\eta}=d$ in $D_{\eta}$. Let $\gamma_{i}$
be the $G$-orbit of $d_{i}$, $\gamma$ the $G$-orbit of $d$. We
already know that $D\in\Shv_{G}^{et}$, thus $D(U)\rightarrow D_{\eta}$
induces $G$-equivariant bijections $\gamma_{1}\rightarrow\gamma$
and $\gamma_{2}\rightarrow\gamma$, whose inverse we denote by $\gamma\ni e\mapsto e_{i}\in\gamma_{i}$.
By lemma~\ref{lem:PropertiesOfet}, $\gamma\in\Set_{\pi(U)}^{fsm}$,
so there is a $(Y,y)\in\Fet_{U}^{c}(\overline{\eta})$ with $(Y(\overline{\eta}),y)\simeq(\gamma,d)$
as pointed $G$-sets. Since $\delta(Y)\simeq j_{U!}(\gamma_{U})$,
we obtain morphisms $j_{i}:Y\rightarrow X$, corresponding to the
morphisms $j_{U!}(\gamma_{U})\simeq j_{U!}(\gamma_{i,U})\hookrightarrow D$.
Let $Z\hookrightarrow Y$ be the pull-back of the diagonal of $X$
under $(j_{1},j_{2}):Y\rightarrow X\times_{S}X$ and set $E=\delta(Z)$.
Since $\delta$ commutes with fiber products, $E$ is the pull-back
of the diagonal of $D$ under $j_{U!}(\gamma)\rightarrow D\times D$.
So 
\[
E(U)=\{e\in\gamma:e_{1}=e_{2}\text{\,in }D(U)\}\quad\text{and}\quad E_{\eta}=\gamma.
\]
Since $X$ is separated etale over $S$, $Z\hookrightarrow Y$ is
a clopen immersion, and since $Y$ is finite etale over $U$, so is
$Z$. Thus $E\simeq j_{U!}(Z(\overline{\eta})_{U})$ and $E(U)\rightarrow E_{\eta}$
is a bijection. It follows that $d_{1}=d_{2}$, which checks the characterization
$(4,c)$, so $D\in\Shv_{G}^{set}$.
\end{proof}
\begin{rem}
A punctual scheme $S$ is irreducible, locally henselian and geometrically
unibranch, with $\Shv_{G}^{\star}=\Shv_{G}^{et}=\Shv_{G}^{set}$.
So we retrieve the well known fact that over such an $S$, every etale
algebraic space is representable by a separated $S$-scheme. 
\end{rem}

\begin{cor}
\label{cor:SetisDisjointUnionOfFet}Any $X\in S_{set}$ is a disjoint
union of irreducible fet $S$-schemes.
\end{cor}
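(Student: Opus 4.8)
The plan is to extract the statement from the proof of Theorem~\ref{thm:Main}, in which the representing scheme of an object of $\Shv_G^{set}$ is already exhibited as a disjoint union. Given $X\in S_{set}$, I would put $D=\delta(X)\in\Shv_G^{set}$. By condition~$(4,g)$ of Proposition~\ref{prop:CaractOfsetAndet}, the canonical morphism
\[
\coprod_{(U,\gamma)\in\mathscr{S}_G^m(D)}j_{U!}(\gamma_U)\longrightarrow D
\]
is an isomorphism in $\Shv_G^\star$, and by Lemma~\ref{lem:PropertiesOfet} each $\gamma$ occurring here is a finite smooth $\pi(U)$-set; hence $j_{U!}(\gamma_U)\simeq\delta(X_{(U,\gamma)})$ for an $S$-scheme $X_{(U,\gamma)}\in\Fet_U\subset S_{fet}$ with $X_{(U,\gamma)}(\overline{\eta})\simeq\gamma$ as $G$-sets. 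Since $\delta$ is an equivalence of categories and coproducts of objects of $S_{et}$ inside $\AlgSp_{et}$ are honest disjoint unions of schemes (using that $\Shv_G^{set}$ is stable under disjoint unions together with Remark~\ref{rem:RepAlgSpIsRepEtale}), transporting this decomposition back through $\delta$ yields $X\simeq\coprod_{(U,\gamma)}X_{(U,\gamma)}$ in $S_{et}$ as a disjoint union of fet $S$-schemes.

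It then remains to see that each $X_{(U,\gamma)}$ is irreducible. Since $\gamma$ is a single $G$-orbit on which $I(U)$ acts trivially, it is a single $\pi(U)$-orbit, so under the equivalence $\Fet_U\simeq\Set_{\pi(U)}^{fsm}$ transitivity of the action forces $X_{(U,\gamma)}$ to be connected over $U$, hence connected as an $S$-scheme since $U$ is connected. By Remark~\ref{rem:IrreducibilityUnibranchSImpliesEtalesLocIrred}, under our standing hypotheses ($S$ irreducible and geometrically unibranch) the connected components of any object of $S_{et}$ coincide with its irreducible components, so $X_{(U,\gamma)}$ is irreducible, which completes the argument.

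The only mildly delicate point --- and it is pure bookkeeping, essentially already carried out inside the proof of Theorem~\ref{thm:Main} --- is checking that the abstract coproduct decomposition of $D$ in $\Shv_G^\star$ really corresponds to a scheme-theoretic disjoint union, i.e.\ that $\delta$ commutes with these coproducts and that the resulting object stays in $S_{set}$. Everything else is a direct appeal to the cited results.
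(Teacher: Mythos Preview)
Your proposal is correct and follows exactly the approach indicated in the paper: the paper's proof is a single sentence pointing to the proof of Theorem~\ref{thm:Main} and to the characterization~$(4,g)$ in Proposition~\ref{prop:CaractOfsetAndet}, and you have spelled out precisely those details. Your additional justification of irreducibility via transitivity of the $\pi(U)$-action and Remark~\ref{rem:IrreducibilityUnibranchSImpliesEtalesLocIrred} is a welcome elaboration that the paper leaves implicit.
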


\begin{proof}
This either follows from the proof of theorem~\ref{thm:Main}, or
from the equivalence $\delta:S_{set}\rightarrow\Shv_{G}^{set}$ and
the characterization $(4,g)$ of $\Shv_{G}^{set}$ in proposition~\ref{prop:CaractOfsetAndet}.
\end{proof}

\subsection{Underlying topological spaces}

Let $\Top$ be the category of topological spaces and continuous maps.
For $\mathcal{S}\in\Top$, we denote by $\mathcal{S}_{loc}$ the strictly
full subcategory of $\Top/\mathcal{S}$ whose objects are local homeomorphisms
$\mathcal{X}\rightarrow\mathcal{S}$. It is well known that the functor
$\mathrm{sec}:\mathcal{S}_{loc}\rightarrow\Shv(\mathcal{S})$ which
takes $\mathcal{X}$ to its sheaf of sections is an equivalence of
categories. An inverse functor maps a sheaf of sets $\mathcal{C}$
on $\mathcal{S}$ to the corresponding \emph{espace étalé}: the underlying
set is $\coprod_{s\in\mathcal{S}}\mathcal{C}_{s}$, and a basis of
open neighborhoods of $x\in\mathcal{C}_{s}$ is given by the subsets
$\{c_{y}:y\in V\}$, for $V$ open in $S$ containing $s$ and $c\in\mathcal{C}(V)$
with $c_{s}=x$. 

Likewise, if $S_{loc}$ is the full subcategory of $S_{et}$ whose
objects are local isomorphisms $X\rightarrow S$, then the functor
$\mathrm{sec}:S_{loc}\rightarrow\Shv(S_{Zar})$ which takes $X\rightarrow S$
to its sheaf of sections is an equivalence of categories. An inverse
functor maps a sheaf of sets $C$ on $S_{Zar}$ to the object $X$
of $S_{loc}$ whose underlying topological space $\left|X\right|$
is as above, equipped with the sheaf of rings $\mathcal{O}_{X}$ which
is the pull-back through $\left|X\right|\rightarrow\left|S\right|$
of $\mathcal{O}_{S}$. It follows that the functor $S_{loc}\rightarrow\left|S\right|_{loc}$
which forgets the structure sheaves is an equivalence of categories. 
\begin{prop}
There is a $2$-commutative diagram of equivalences
\[
\xyR{2pc}\xymatrix{S_{loc}\ar@{_{(}->}[d]_{\mathrm{yon}}\ar[r]^{\mathrm{sec}} & \Shv\ar@{^{(}->}[d]^{\mathrm{inc}}\\
\AlgSp_{et}\ar[r]^{\delta} & \Shv_{G}^{\star}
}
\]
\end{prop}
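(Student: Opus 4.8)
The plan is to produce a natural isomorphism $\delta\circ\mathrm{yon}\simeq\mathrm{inc}\circ\mathrm{sec}$ between the two composite functors $S_{loc}\to\Shv_{G}^{\star}$. Since $\mathrm{sec}\colon S_{loc}\to\Shv$ is the equivalence recalled just above and $\delta\colon\AlgSp_{et}\to\Shv_{G}^{\star}$ is an equivalence (Theorem~\ref{thm:Main}), such an isomorphism is exactly the asserted $2$-commutativity, and it identifies the fully faithful $\mathrm{yon}\colon S_{loc}\hookrightarrow\AlgSp_{et}$ with the fully faithful $\mathrm{inc}\colon\Shv\hookrightarrow\Shv_{G}^{\star}$; in particular an object $A$ of $\AlgSp_{et}$ lies in the essential image of $S_{loc}$ if and only if the associated object $D=\delta(A)$ of $\Shv_{G}^{\star}$ carries the trivial $G$-action.

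First I would treat an open $U\subset S$. The open immersion $U\hookrightarrow S$ is fet, so $U$ is an object of $S_{fet}\subset\AlgSp_{et}$ with image $U$ in $S$; since $\eta\in U$ (when $U\neq\emptyset$) has a unique lift to $U$, we get $U(\overline{\eta})=\{\star\}$. By Lemma~\ref{lem:DeltaOnFet}, $\delta(U)=j_{U!}(\{\star\}_{U})$: the sheaf of $G$-sets, with trivial action, sending $V$ to $\{\star\}$ when $V\subset U$ and to $\emptyset$ otherwise. This is precisely $\mathrm{sec}(U)$ regarded inside $\Shv\subset\Shv_{G}^{\star}$, and the identification is visibly natural in $U$; for $U=\emptyset$ both sides are the initial object. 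Thus $\delta\circ\mathrm{yon}$ and $\mathrm{inc}\circ\mathrm{sec}$ are naturally isomorphic on the full subcategory of opens of $S$ inside $S_{loc}$.

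For a general $X\in S_{loc}$ I would bootstrap from the opens case: by definition $X=\bigcup_{k}V_{k}$ with each $V_{k}$ an open subscheme open-immersing into $S$, all intersections $V_{k}\cap V_{k'}$ (and higher ones) are again such opens, so $h_{X}=\bigcup_{k}h_{V_{k}}$ exhibits $h_{X}$ in $\AlgSp_{et}$ as a colimit of representables coming from opens of $S$; applying the equivalence $\delta$ and the opens case term by term, and matching with the parallel presentation of $X$ in $S_{loc}\simeq\Shv$ under $\mathrm{sec}$, yields $\delta(h_{X})\simeq\mathrm{sec}(X)$ in $\Shv\subset\Shv_{G}^{\star}$. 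I expect the \emph{only} real work to be in upgrading this from a pointwise to a natural isomorphism in $X$; the clean way is to note that both functors $\delta\circ\mathrm{yon}$ and $\mathrm{inc}\circ\mathrm{sec}$ from $S_{loc}\simeq\Shv$ are cocontinuous — for $\mathrm{inc}\circ\mathrm{sec}$ because $\mathrm{inc}$ has the right adjoint $(-)^{G}$ of section~\ref{subsec:SheavesNoAction} and $\mathrm{sec},\delta$ are equivalences, and for $\delta\circ\mathrm{yon}$ because $\mathrm{yon}\colon S_{loc}\hookrightarrow\AlgSp_{et}\simeq\Shv(S_{et})$ corresponds, under $\mathrm{sec}$, to the pull-back $\epsilon^{\ast}$ along the morphism of sites $\epsilon\colon S_{et}\to S_{Zar}$ attached to the continuous inclusion $S_{Zar}\hookrightarrow S_{et}$, which is a left adjoint — so each is the left Kan extension of its restriction to the opens of $S$, and those restrictions were identified in the previous paragraph. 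The $2$-commutativity together with the fact that $\mathrm{sec}$ and $\delta$ are equivalences then gives the identification of $\mathrm{yon}\colon S_{loc}\hookrightarrow\AlgSp_{et}$ with $\mathrm{inc}\colon\Shv\hookrightarrow\Shv_{G}^{\star}$, hence the characterization of the essential image stated in the first paragraph. (If one prefers to avoid the cocontinuity bookkeeping, one can instead verify by hand that for a local isomorphism $X\to S$ each stalk $\delta(X)_{s}$ carries the trivial $G$-action, reducing via Remark~\ref{rem:IrreducibilityUnibranchSImpliesEtalesLocIrred} and Proposition~\ref{prop:IfLocHensEtcovbyFet} to the case of an open immersion into $S$ as in the proof of Lemma~\ref{lem:FETCoversRefined}, and conversely that an $X\in S_{et}$ with $\delta(X)$ of trivial $G$-action has generic fibre $\eta$ on each irreducible component and is therefore covered by opens of $S$; this settles the essential-image statement, though the opens case is still needed to pin down the natural isomorphism itself.)
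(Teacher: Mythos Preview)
Your argument is correct and follows the same overall arc as the paper --- first compute $\delta(U)$ for opens $U\subset S$ via Lemma~\ref{lem:DeltaOnFet}, then propagate to general $X\in S_{loc}$ via its open covering by such $U$'s --- but you handle the second step more abstractly. The paper avoids the Kan-extension bookkeeping by writing down the natural isomorphism $\mathrm{sec}(X)\to\delta(X)$ directly: a section $x\in\mathrm{sec}(X)(U)$ is an $S$-morphism $U\to X$, to which one applies $\delta$ to get a morphism $j_{U!}(\star_U)=\delta(U)\to\delta(X)=D$, i.e.\ a section $d\in D(U)$; the inverse is given by the full faithfulness of $\delta$, and functoriality in both $U$ and $X$ is immediate. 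Your cocontinuity argument buys a conceptual explanation (both composites are left adjoints out of $\Shv(S_{Zar})$, hence left Kan extensions from representables), at the cost of verifying that $\mathrm{yon}\colon S_{loc}\hookrightarrow\AlgSp_{et}$ really corresponds to $\epsilon^{\ast}$ under the ambient equivalences; the paper's direct bijection sidesteps this and makes the naturality transparent.
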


\begin{proof}
Let $f:X\rightarrow S$ be a local isomorphism, $\mathcal{X}=\mathrm{sec}(X)$
its sheaf of sections, $D=\delta(X)$. Since $X\in S_{loc}$, there
is a Zariski covering $X=\cup X_{i}$ such that $f$ induces an isomorphism
from $X_{i}$ to an open $U_{i}=f(X_{i})$ of $S$. Then $D=\cup D_{i}$
with 
\[
D_{i}=\delta(X_{i})=j_{U_{i}!}(\star_{U_{i}})\quad\text{where}\quad X_{i}(\overline{\eta})=\{\star\}.
\]
In particular $D_{i}$ has trivial action of $G$, and so does $D=\cup D_{i}$.
Let $U$ be an open of $S$. A section $x\in\mathcal{X}(U)$ is an
$S$-morphism $U\rightarrow X$ which $\delta$ maps to a morphism
$j_{U!}(\star_{U})\rightarrow D$ corresponding to a section $d\in D(U)$.
Conversely, a section $d\in D(U)$ gives a morphism $j_{U!}(\star_{U})\rightarrow D$
which is the image of an $S$-morphism $U\rightarrow X$ which is
a section $x\in\mathcal{X}(U)$. We thus obtain an isomorphism of
sheaves $\mathcal{X}\rightarrow D$ on $S_{Zar}$ which is plainly
functorial in $X$, and this yields the desired isomorphism 
\[
\mathrm{inc}\circ\mathrm{sec}\rightarrow\delta\circ\mathrm{yon}
\]
 between functors $S_{loc}\rightarrow\Shv_{G}^{\star}$. 
\end{proof}
We have seen in \ref{subsec:SheavesNoAction} that the embedding $\Shv\hookrightarrow\Shv_{G}^{\star}$
has left and right adjoints 
\[
(-)_{G},(-)^{G}:\Shv_{G}^{\star}\rightarrow\Shv
\]
with epimorphic units $D\twoheadrightarrow D_{G}$ and monomorphic
counits $D^{G}\hookrightarrow D$. So the Yoneda embedding $S_{loc}\rightarrow\AlgSp_{et}$
has matching left and right adjoints
\[
(-)_{loc},(-)^{loc}:\AlgSp_{et}\rightarrow S_{loc}
\]
 with surjective units $A\twoheadrightarrow A_{loc}$ and open immersion
counits $A^{loc}\hookrightarrow A$. To describe these functors, we
first record the following consequence of our assumptions.
\begin{lem}
The topological space $\left|A\right|$ associated to $A\in\AlgSp_{et}$
belongs to $\left|S\right|_{loc}$.
\end{lem}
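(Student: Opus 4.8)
The plan is to prove the statement first when $A$ is a scheme $X\in S_{et}$, and then to deduce the general case from an étale presentation. Being a local homeomorphism onto $|S|$ is local on the source, so by Proposition~\ref{prop:IfLocHensEtcovbyFet} we may assume $X$ is a fet $S$-scheme, say $f\colon X\to V\hookrightarrow S$ with $V$ open in $S$ and $X\to V$ finite étale. By Remark~\ref{rem:IrreducibilityUnibranchSImpliesEtalesLocIrred} the connected components of $X$ are open, clopen, and irreducible, and each of them is again finite étale over $V$; localizing further on the source, we may thus also assume $X$ irreducible. Now $V$, being open in the locally henselian $S$, is itself locally henselian, and $f\colon X\to V$ is integral with $X$ irreducible, so Proposition~\ref{prop:IntegralOverLocHensIsHomeo} shows that $W:=f(X)$ is closed in $V$ and that $f$ induces a homeomorphism $|X|\to W$. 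Since $f$ is moreover étale, hence open, $W$ is also open in $V$, hence clopen in $V$, hence open in $S$; thus $|X|\to|S|$ factors as a homeomorphism onto the open subset $W$ followed by the inclusion $W\hookrightarrow|S|$, and in particular it is a local homeomorphism (indeed an open embedding of topological spaces).

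For arbitrary $A\in\AlgSp_{et}$, choose a scheme $U\in S_{et}$ and an étale surjective morphism $p\colon U\to A$, as produced in the proof of Proposition~\ref{prop:FromAlgSp2EtShv}. On topological spaces, $|p|\colon|U|\to|A|$ is surjective (by definition of surjectivity for morphisms of algebraic spaces) and open (since $p$ is étale, hence an open morphism of algebraic spaces), while the composite $|U|\to|A|\to|S|$ is the structural map of $U$, a local homeomorphism by the previous step. The lemma then follows from the elementary observation that if $q\colon Y\to Z$ is an open surjection of topological spaces and $r\circ q\colon Y\to W$ is a local homeomorphism, then so is $r\colon Z\to W$: given $z\in Z$, pick $y\in q^{-1}(z)$ and an open $\Omega\ni y$ on which $r\circ q$ restricts to a homeomorphism onto an open subset of $W$; then $q$ is injective on $\Omega$, the set $q(\Omega)$ is an open neighbourhood of $z$, the map $q|_{\Omega}\colon\Omega\to q(\Omega)$ is a continuous open bijection hence a homeomorphism, and $r|_{q(\Omega)}=(r\circ q)|_{\Omega}\circ(q|_{\Omega})^{-1}$ is a homeomorphism of $q(\Omega)$ onto an open subset of $W$. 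Taking $Y=|U|$, $Z=|A|$, $W=|S|$ completes the argument.

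The only delicate point — and the unique place where all three hypotheses on $S$ are used simultaneously — is the reduction in the first paragraph: one needs the connected components of a fet $S$-scheme to be open and irreducible (which uses irreducibility and geometric unibranchedness, through Remark~\ref{rem:IrreducibilityUnibranchSImpliesEtalesLocIrred}) precisely so that Proposition~\ref{prop:IntegralOverLocHensIsHomeo} (which uses local henselianness) applies to each component. The passage from schemes to algebraic spaces, and the topological lemma it rests on, are purely formal.
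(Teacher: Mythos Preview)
Your first paragraph matches the paper's proof essentially verbatim: reduce to irreducible fet $S$-schemes via Proposition~\ref{prop:IfLocHensEtcovbyFet} and Remark~\ref{rem:IrreducibilityUnibranchSImpliesEtalesLocIrred}, then apply Proposition~\ref{prop:IntegralOverLocHensIsHomeo}.

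The second step, however, is genuinely different and more elementary than the paper's. The paper argues categorically: writing $A$ as a quotient $U/R$ with $U,R\in S_{et}$, it forms the quotient $Q=\left|U\right|/\left|R\right|$ in $\left|S\right|_{loc}\simeq\Shv(\left|S\right|)$, shows $\left|U\right|\to Q$ is open and surjective (the surjectivity requiring a small gluing argument), and then compares the two equivalence relations $\left|U\right|\times_{Q}\left|U\right|$ and $\left|R\right|$ to conclude that $Q\simeq\left|A\right|$. You instead bypass the quotient comparison entirely with the point-set observation that a continuous map $r$ which becomes a local homeomorphism after precomposition with an open surjection is already a local homeomorphism. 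This is cleaner and avoids invoking the equivalence $\left|S\right|_{loc}\simeq\Shv(\left|S\right|)$; the only fact you need beyond the scheme case is that an \'etale surjection of algebraic spaces induces an open surjection on underlying spaces, which is standard. The paper's route has the mild virtue of staying inside the espace \'etal\'e formalism that the surrounding section is built on, but for the lemma at hand your argument is shorter and just as rigorous.
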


\begin{proof}
Suppose first that $A=\Hom_{S}(-,X)$ for some $X\in S_{et}$. Then
$\left|A\right|=\left|X\right|$ by \cite[\href{https://stacks.math.columbia.edu/tag/03BX}{Tag 03BX}]{SP}.
By proposition~\ref{prop:IfLocHensEtcovbyFet}, $X$ has a Zariski
covering $X=\cup X_{i}$ with $X_{i}\in S_{fet}$. By remark~\ref{rem:IrreducibilityUnibranchSImpliesEtalesLocIrred},
we may assume that $X_{i}$ is irreducible. Then by proposition~\ref{prop:IntegralOverLocHensIsHomeo},
$X_{i}\rightarrow S$ is an homeomorphism onto its image. So $\left|X\right|\in\left|S\right|_{loc}$.
For the general case, let $X\rightarrow A$ be a surjective etale
morphism with $X\in S_{et}$. Then $R=X\times_{A}X$ also belongs
to $S_{et}$, and $\left|A\right|$ is the quotient of $\left|X\right|$
by the equivalence relation $\left|R\right|$ in $\Top$, see~\cite[\href{https://stacks.math.columbia.edu/tag/03BX}{Tag 03BX}]{SP}.
Since $\left|X\right|,\left|R\right|\in\left|S\right|_{loc}\simeq\Shv(\left|S\right|)$,
there is also a quotient $Q=\left|X\right|/\left|R\right|$ in $\left|S\right|_{loc}$.
Since morphisms in $\left|S\right|_{loc}$ are local homeomorphisms,
hence open, $\left|X\right|\rightarrow Q$ is open. Since $\left|X\right|\rightarrow Q$
is an epimorphism in $\left|S\right|_{loc}$, it is surjective: glue
two copies of $Q$ along the open image of $\left|X\right|$ to obtain
$Q^{\prime}$ in $\left|S\right|_{loc}$ with two continuous maps
$Q\rightarrow Q'$ inducing the same map $\left|X\right|\rightarrow Q'$;
then these two maps must be equal, so $\left|X\right|\rightarrow Q$
is indeed surjective. Since open surjective continuous maps are quotient
maps in $\Top$, $Q$ is the quotient of $\left|X\right|$ by the
equivalence relation $\left|X\right|\times_{Q}\left|X\right|$ in
$\Top$. Since this fiber product belongs to $\left|S\right|_{loc}$,
it is equal to the corresponding fiber product in $\left|S\right|_{loc}\simeq\Shv(\left|S\right|)$,
which is $\left|R\right|$ by general properties of topoi. Thus $\left|A\right|\simeq Q$,
and $\left|A\right|$ belongs to $\left|S\right|_{loc}$.
\end{proof}
\begin{prop}
Left and right adjoints of $S_{loc}\rightarrow\AlgSp_{et}$ are as
follows: 
\begin{enumerate}
\item $(-)_{loc}:\AlgSp_{et}\rightarrow S_{loc}$ takes $A$ to the $S$-scheme
$A_{loc}\in S_{loc}$ with 
\[
\left|A_{loc}\right|=\left|A\right|,\qquad\mathcal{O}_{A_{loc}}=\text{pull-back of }\mathcal{O}_{S}\text{\,through }\left|A\right|\rightarrow\left|S\right|.
\]
The unit $A\twoheadrightarrow A_{loc}$ is a surjective homeomorphism.
\item $(-)^{loc}:\AlgSp_{et}\rightarrow S_{loc}$ takes $A$ to the largest
open subspace $A^{loc}$ of $A$ which belongs to $S_{loc}$. The
counit is the open embedding $A^{loc}\hookrightarrow A$.
\end{enumerate}
\end{prop}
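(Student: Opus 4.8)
The plan is to transport both adjunctions through the $2$‑commutative square of equivalences just established, which identifies the Yoneda embedding $S_{loc}\hookrightarrow\AlgSp_{et}$ with the inclusion $\Shv\hookrightarrow\Shv_{G}^{\star}$ via $\mathrm{sec}$ and $\delta$. Writing $D=\delta(A)$, this identification gives $\delta(A_{loc})\simeq D_{G}$ and $\delta(A^{loc})\simeq D^{G}$, with the unit $A\twoheadrightarrow A_{loc}$ transporting the epimorphism $D\twoheadrightarrow D_{G}$ and the counit $A^{loc}\hookrightarrow A$ transporting the monomorphism $D^{G}\hookrightarrow D$ from \S\ref{subsec:SheavesNoAction}. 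Existence of the two adjoints, surjectivity of the unit, and the fact that the counit is an open immersion have already been recorded; what remains is to read off the \emph{descriptions} of $A_{loc}$ and $A^{loc}$ from the formulas $D_{G}=a(U\mapsto G\backslash D(U))$ and $D^{G}(U)=D(U)^{G}$.

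For $(2)$ I would argue formally. As $D^{G}\hookrightarrow D$ is a monomorphism of presheaves, its transport $A^{loc}\hookrightarrow A$ is an open immersion by \S\ref{subsec:MonoEpiInAlgSpet}, so $A^{loc}$ is an open subspace of $A$; and $D^{G}$ carries the trivial $G$‑action, so $A^{loc}\in S_{loc}$ by the $2$‑commutative diagram. For maximality: any open subspace $A'\hookrightarrow A$ with $A'\in S_{loc}$ has $\delta(A')\hookrightarrow D$ a monomorphism of presheaves (hence a subsheaf) carrying the trivial $G$‑action, so $\delta(A')(U)\subseteq D(U)^{G}=D^{G}(U)$ for all $U$, i.e.\ $\delta(A')\subseteq D^{G}$ inside $D$, i.e.\ $A'\subseteq A^{loc}$ inside $A$; the counit is then the open embedding $A^{loc}\hookrightarrow A$.

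For $(1)$ the structure sheaf is free of charge: $A_{loc}\in S_{loc}$, so the explicit inverse of $\mathrm{sec}\colon S_{loc}\xrightarrow{\sim}\Shv$ recalled above gives $\mathcal{O}_{A_{loc}}=$ pull‑back of $\mathcal{O}_{S}$ along $\left|A_{loc}\right|\to\left|S\right|$, with $\left|A_{loc}\right|$ the espace \'etal\'e of $\mathrm{sec}(A_{loc})\simeq D_{G}$ over $\left|S\right|$. It remains to show that the surjective unit $A\twoheadrightarrow A_{loc}$ induces a homeomorphism on underlying spaces; since $\left|A\right|$ and $\left|A_{loc}\right|$ both lie in $\left|S\right|_{loc}$ and the map lies over $\left|S\right|$, it is enough to check bijectivity, which I would do in three steps. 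First, for $X\in S_{fet}$: decompose $X=\coprod_{j}X_{j}$ into its irreducible components (remark~\ref{rem:IrreducibilityUnibranchSImpliesEtalesLocIrred}); each $X_{j}$ is finite \'etale over and surjective onto its image $U_{j}$ in $S$ (as $U_{j}$ is irreducible), hence $X_{j}\to U_{j}$ is a homeomorphism by proposition~\ref{prop:IntegralOverLocHensIsHomeo}; using lemma~\ref{lem:DeltaOnFet} one identifies $(X_{j})_{loc}$ with the open subscheme $U_{j}\hookrightarrow S$ and the unit $X_{j}\to(X_{j})_{loc}$ with this structural homeomorphism. Second, for $X\in S_{et}$: cover $X$ by objects of $S_{fet}$ (proposition~\ref{prop:IfLocHensEtcovbyFet}) and glue, using that $(-)_{loc}$ preserves the colimit. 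Third, for general $A\simeq U/R$ with $U,R\in S_{et}$: one has $\left|A\right|=\left|U\right|/\left|R\right|$ in $\Top$ by \cite[\href{https://stacks.math.columbia.edu/tag/03BX}{Tag 03BX}]{SP}, and the homeomorphisms $\left|U\right|\to\left|U_{loc}\right|$ and $\left|R\right|\to\left|R_{loc}\right|$ from the second step, being compatible with the two projections, descend to the desired one.

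I expect the third step to be the main obstacle: it requires knowing that the topological quotient $\left|U_{loc}\right|/\left|R_{loc}\right|$ formed in $\Top$ agrees with the quotient computed in $\left|S\right|_{loc}\simeq\Shv$, so that it really equals $\left|A_{loc}\right|$ — but this is precisely what the proof of the preceding lemma (that $\left|A\right|\in\left|S\right|_{loc}$) establishes, so the work amounts to reusing that argument. The remaining verifications — tracking units through the equivalences and the gluings in the first two steps — are routine.
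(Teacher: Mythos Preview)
Your argument for $(2)$ is essentially the same as the paper's: both use the abstractly known existence of the right adjoint together with the fact that its counits are open immersions to conclude that $A^{loc}$ is the largest open subspace of $A$ lying in $S_{loc}$.

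For $(1)$, however, you and the paper proceed in opposite directions. The paper \emph{defines} $A_{loc}$ by the stated formula (same underlying space, structure sheaf pulled back from $S$) and then verifies that this is a left adjoint by constructing the unit directly: for $X\in S_{et}$ the unit $X\to X_{loc}$ comes from the adjoint $f^{\ast}\mathcal{O}_{S}\to\mathcal{O}_{X}$ of the structure morphism, and this extends formally to all of $\AlgSp_{et}$ via the equivalence with $\Shv(S_{et})$. You instead start from the abstractly known left adjoint (transported from $(-)_{G}$) and work to recover the topological description $\left|A_{loc}\right|=\left|A\right|$, via a three-step reduction (fet, then et, then general $A$) that reuses the argument of the preceding lemma. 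Your route is correct, and it has the virtue of making the identification $\delta(A_{loc})\simeq D_{G}$ explicit from the outset; but it is longer, and the gluing in your second step needs a small check you did not spell out (that the intersections $X_{i}\cap X_{j}$ of irreducible fet opens are again fet, which follows since $X_{i}\to U_{i}$ is a homeomorphism). The paper's direct construction of the unit avoids all of this bookkeeping.
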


\begin{proof}
The given formula for $A_{loc}$ defines a functor 
\[
(-)_{loc}:\AlgSp_{et}\rightarrow S_{loc}.
\]
We have to show that it is left adjoint to the embedding $S_{loc}\hookrightarrow\AlgSp_{et}$.
This amounts to constructing a functorial unit $A\rightarrow A_{loc}$
which is an isomorphism when $A\in S_{loc}$. If $A=\Hom_{S}(-,X)$
for some $f:X\rightarrow S$ in $S_{et}$, $X_{loc}$ has the same
underlying space as $X$, with structure sheaf $\mathcal{O}_{X_{loc}}=f^{\ast}\mathcal{O}_{S}$
(pull-back as sheaves). The structure morphism $X\rightarrow S$ gives
a morphism $\mathcal{O}_{S}\rightarrow f_{\ast}\mathcal{O}_{X}$ whose
adjoint $f^{\ast}\mathcal{O}_{S}\rightarrow\mathcal{O}_{X}$ is a
morphism $\mathcal{O}_{X_{loc}}\rightarrow\mathrm{Id}_{\ast}\mathcal{O}_{X}$,
which yields our unit $\epsilon_{X}:X\rightarrow X_{loc}$. This construction
is functorial in $X$ and gives the identity when $X\in S_{loc}$,
so it produces the desired left adjoint of $S_{loc}\hookrightarrow S_{et}$.
Extending our units from $S_{et}$ to $\AlgSp_{et}$ is then a formal
consequence of proposition~\ref{prop:FromAlgSp2EtShv}: to define
$\epsilon_{A}:A\rightarrow A_{loc}$ in $\AlgSp_{et}$ amounts to
defining a morphism between the corresponding sheaves on $S_{et}$,
and this we do by mapping a section $a\in A(X)$ corresponding to
a morphism $a:X\rightarrow A$, to the section $a_{loc}^{\sim}\in A_{loc}(X)$
which corresponds to the morphism
\[
\xymatrix{X\ar[r]^{\epsilon_{X}} & X_{loc}\ar[r]^{a_{loc}} & A_{loc}.}
\]
For the right adjoint, we know that it exists, and its counits are
open immersions. Thus $A^{loc}$ is an open subspace of $A$ belonging
to $S_{loc}$, and by the adjunction property, any such open is contained
in $A^{loc}$. So $A^{loc}$ is the largest such open. 
\end{proof}

\subsection{Connected components}

By definition, the set of connected components $\pi_{0}(A)$ of an
algebraic space $A$ over $S$ is the set of connected components
of the associated topological space $\left|A\right|$. For $A\in\AlgSp_{et}$,
$\left|A\right|=\left|A_{loc}\right|$, so $\pi_{0}(A)$ is also the
set of connected components of the etale $S$-scheme $A_{loc}$, which
by remark~\ref{rem:IrreducibilitySImpliesEtaleLocConnec}, is the
set of points of its generic fiber $A_{loc,\eta}$. Since $A_{loc}\rightarrow S$
is a local isomorphism, so is its generic fiber, thus $\pi_{0}(A)$
is also the set of sections of $A_{loc,\eta}\rightarrow\eta$. If
$D=\delta(A)$, then $D_{G}\simeq\delta(A_{loc})\simeq\mathrm{sec}(A_{loc})$
so $\pi_{0}(A)\simeq D_{G,\eta}\simeq G\backslash D_{\eta}=\pi_{0}(D)$. 

We may also work this out as follows, without passing through $A_{loc}$.
Since $A$ is etale over $S$, $\left|A\right|$ is locally connected:
this follows from remark~\ref{rem:IrreducibilitySImpliesEtaleLocConnec}
and \cite[\href{https://stacks.math.columbia.edu/tag/03BT}{Tag 03BT}]{SP}
(and holds whenever $S$ locally has a finite number of irreducible
components). We may then redefine $\pi_{0}(A)$ as the set of minimal
nonempty clopen subsets of $\left|A\right|$, and $\left|A\right|$
is the disjoint union of these connected components. By~\cite[\href{https://stacks.math.columbia.edu/tag/03BZ}{Tag 03BZ}]{SP},
opens of $\left|A\right|$ correspond bijectively with opens of $A$,
which as seen above, are just the subobjects of $A$ in $\AlgSp_{et}$.
Thus $\pi_{0}(A)$ is also the set of minimal nontrivial complemented
elements in the poset of all subobjects of $A$ in $\AlgSp_{et}$,
and $A$ is the disjoint union of these subobjects. Passing this categorical
description through the equivalence $\delta$, we obtain another set
$\pi_{0}^{\prime}(D)$ of connected components of $D=\delta(A)$,
and we must check that it matches the set $\pi_{0}(D)$ from section~\ref{subsec:CharactOfEtSetSheaves}.
Since
\[
D={\textstyle \coprod_{c\in\pi_{0}(D)}}D(c)\quad\text{in}\quad\Shv_{G}^{\star}
\]
we have to show that any nontrivial complemented $G$-stable subsheaf
$D_{1}$ of $D$ contained in $D(c)$ equals $D(c)$. Since $D_{1}$
is complemented in $D$, it is also complemented in $D(c)$: there
is a $G$-stable subsheaf $D_{2}$ with $D(c)=D_{1}\coprod D_{2}$.
But then $c=D_{1,\eta}\coprod D_{2,\eta}$, and this forces one of
the $D_{i,\eta}$'s to be empty, which can only occur if $D_{i}$
itself is the empty sheaf. So $D_{1}=D(c)$, and indeed $\pi_{0}(D)=\pi_{0}^{\prime}(D)$. 

\subsection{Base change}

Let $S'$ be another irreducible, locally henselian and geometrically
unibranch scheme, and let $f:S^{\prime}\rightarrow S$ be a morphism
inducing an isomorphism on generic points. For instance, we could
take any of the following: 
\begin{enumerate}
\item An open immersion $j_{U}:U\hookrightarrow S$ with $U\neq\emptyset$,
\item A monomorphism $S(s)\hookrightarrow S$ with $S(s)=\Spec(\mathcal{O}_{S,s})$
for some $s\in S$,
\item The closed immersion $S_{red}\hookrightarrow S$,
\item The generic point map $\iota:\eta\rightarrow S$.
\end{enumerate}
\begin{prop}
\label{prop:BaseChange}There is a $2$-commutative diagram
\[
\xyC{3pc}\xymatrix{\AlgSp_{et}(S)\ar[d]_{f^{\ast}}\ar[r]^{\alpha} & \Shv(S_{et})\ar[d]^{f^{\ast}}\ar[r]\sp(0.45){\gamma\circ\beta} & \Shv_{G}^{\star}(S_{Zar})\ar[d]^{f^{\ast}}\ar@{^{(}->}[r]^{\mathrm{inc}} & \Shv_{G}(S_{Zar})\ar[d]^{f^{\ast}}\\
\AlgSp_{et}(S')\ar[r]^{\alpha} & \Shv(S_{et}^{\prime})\ar[r]\sp(0.45){\gamma\circ\beta} & \Shv_{G}^{\star}(S_{Zar}^{\prime})\ar@{^{(}->}[r]^{\mathrm{inc}} & \Shv_{G}(S_{Zar}^{\prime})
}
\]
\end{prop}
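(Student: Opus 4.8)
The leftmost square is the content of Proposition~\ref{prop:BaseChange4alpha}: there $\alpha$ and $\theta^{\ast}$ are mutually inverse equivalences intertwining the two copies of $f^{\ast}$, so nothing new is needed for the $\AlgSp_{et}$-column and I may work entirely on the $\Shv(S_{et})$-side. The plan for the middle and right squares is to reduce both to a single computation on the generating subcategory $S_{fet}$, using that every functor in sight preserves colimits. First I would record the cocontinuity inputs: each $f^{\ast}$ (on $\Shv(S_{et})$ and on $\Shv_{G}$) is a left adjoint of the corresponding $f_{\ast}$; the equivalence $\gamma\circ\beta\colon\Shv(S_{et})\to\Shv_{G}^{\star}(S_{Zar})$ is cocontinuous; and the inclusion $\mathrm{inc}\colon\Shv_{G}^{\star}\hookrightarrow\Shv_{G}$ is cocontinuous because it is a composite of left adjoints (sections~7.2--7.3, whose counits $D^{sm}\hookrightarrow D$ and $D^{\star}\hookrightarrow D$ exhibit the inclusions as the left adjoints of $(-)^{sm}$ and $(-)^{\star}$). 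Hence both composites
\[
f^{\ast}\circ\mathrm{inc}\circ\gamma\circ\beta,\qquad \mathrm{inc}\circ\gamma\circ\beta\circ f^{\ast}\colon\ \Shv(S_{et})\longrightarrow\Shv_{G}(S_{Zar}^{\prime})
\]
preserve colimits.

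Next I would produce a natural isomorphism between these two composites on fet generators. For $X\in\Fet_{U}\subset S_{fet}$, Lemma~\ref{lem:DeltaOnFet} gives $(\gamma\circ\beta)(X)=j_{U!}(X(\overline{\eta})_{U})$. Since fet morphisms are stable under base change, $f^{\ast}X=X\times_{S}S^{\prime}$ lies in $\Fet_{U^{\prime}}$ with $U^{\prime}=f^{-1}(U)$; and because $f$ is an isomorphism on generic points (so $G=G^{\prime}$ and $\overline{\eta}^{\prime}\mapsto\overline{\eta}$) one has $(f^{\ast}X)(\overline{\eta})\simeq X(\overline{\eta})$ as $G$-sets, whence $(\gamma\circ\beta)(f^{\ast}X)=j_{U^{\prime}!}(X(\overline{\eta})_{U^{\prime}})$ by Lemma~\ref{lem:DeltaOnFet} applied over $S^{\prime}$. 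On the other side, the base-change identity $f^{\ast}\circ j_{U!}\simeq j_{U^{\prime}!}\circ f_{U}^{\ast}$ (valid because $j_{U!}$ is characterized by $j_{U}^{\ast}j_{U!}=\mathrm{id}$ and vanishing on the complement, both preserved by pullback), together with the fact that $f_{U}^{\ast}$ carries the constant sheaf $X(\overline{\eta})_{U}$ to $X(\overline{\eta})_{U^{\prime}}$, yields $f^{\ast}\bigl(j_{U!}(X(\overline{\eta})_{U})\bigr)\simeq j_{U^{\prime}!}(X(\overline{\eta})_{U^{\prime}})$. This matches the two composites on $X$, naturally in $X\in S_{fet}$.

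Finally I would invoke density: by local henselianity, $\beta$ is an equivalence $\Shv(S_{et})\simeq\Shv(S_{fet})$ (Corollary~\ref{cor:BetaEquiv}), so every object of $\Shv(S_{et})$ is canonically a colimit of fet-representables. A natural isomorphism of cocontinuous functors on this generating subcategory extends uniquely to one on all of $\Shv(S_{et})$, so the outer composite $\mathrm{inc}\circ\gamma\circ\beta$ commutes with $f^{\ast}$. Since $\gamma\circ\beta$ is essentially surjective onto $\Shv_{G}^{\star}(S_{Zar})$, this isomorphism exhibits $f^{\ast}\bigl(\mathrm{inc}\,(\gamma\circ\beta)(B)\bigr)$ as $\mathrm{inc}$ of an object of $\Shv_{G}^{\star}(S_{Zar}^{\prime})$; hence $f^{\ast}$ on $\Shv_{G}$ carries $\Shv_{G}^{\star}(S_{Zar})$ into $\Shv_{G}^{\star}(S_{Zar}^{\prime})$ (the right square), and the induced functor on $\Shv_{G}^{\star}$ realizes the middle square. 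I expect the main obstacle to be the bookkeeping that promotes the isomorphism on fet generators to a genuine natural isomorphism of cocontinuous functors --- checking compatibility with morphisms of fet schemes and with the colimit presentations --- rather than any isolated hard point; a secondary point requiring care is the cocontinuity of $\mathrm{inc}\colon\Shv_{G}^{\star}\hookrightarrow\Shv_{G}$, without which the reduction to generators fails. As an independent check on the right square, condition~(2) of Proposition~\ref{prop:FromPiShv2GShv} is preserved directly: $I^{\prime}(s^{\prime})\subset I(f(s^{\prime}))$ by functoriality of the inertia groups (Proposition~\ref{prop:LinkI(s)andI(U)}), and $(f^{\ast}D)_{s^{\prime}}=D_{f(s^{\prime})}$, so $I^{\prime}(s^{\prime})$ indeed acts trivially.
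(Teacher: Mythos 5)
Your proposal is correct and follows essentially the same route as the paper: reduce to $B=\Hom_{S}(-,X)$ with $X\in S_{fet}$ via cocontinuity of all functors in sight, compute both sides with Lemma~\ref{lem:DeltaOnFet} as extensions by $j_{U!}$ of constant sheaves, and conclude from the compatibility of $j_{U!}$ and $(-)_{U}$ with $f^{\ast}$. The one point you flag as bookkeeping --- promoting the isomorphism on fet generators to a global natural isomorphism --- is handled in the paper by first constructing the comparison transformation $bc$ globally by adjunction from the unit $B\rightarrow f_{\ast}f^{\ast}B$ and only then checking it is an isomorphism on generators, which is the cleaner order of operations.
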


\begin{proof}
The $2$-commutativity of the first square was established in proposition~\ref{prop:BaseChange4alpha}.
For the second and third square, we first construct a base change
morphism 
\[
\xyR{1.2pc}\xyC{1.5pc}\xymatrix{\Shv\left(S_{et}\right)\ar[dd]_{f^{\ast}}\ar[rr]\sp(0.45){\gamma\circ\beta} &  & \Shv_{G}(S_{Zar})\ar[dd]^{f^{\ast}}\\
 &  & \,\ar@{..>}@/_{1pc}/[dl]_{bc}\\
\Shv\left(S_{et}^{\prime}\right)\ar[rr]\sb(0.45){\gamma\circ\beta} & \vphantom{\,m} & \Shv_{G}(S_{Zar}^{\prime})
}
\quad\text{adjoint to}\quad\xymatrix{\Shv\left(S_{et}\right)\ar[dd]_{f^{\ast}}\ar[rr]\sp(0.45){\gamma\circ\beta} & \vphantom{\,m}\,\ar@{..>}@/_{1pc}/[dr]_{bc'} & \Shv_{G}(S_{Zar})\\
 &  & \,\\
\Shv\left(S_{et}^{\prime}\right)\ar[rr]\sb(0.45){\gamma\circ\beta} &  & \Shv_{G}(S_{Zar}^{\prime})\ar[uu]_{f_{\ast}}
}
\]
So let $B\in\Shv(S_{et})$, $B'=f^{\ast}B$ and let $C$ and $C'$
be their images under $\gamma_{2}\circ\gamma_{1}\circ\beta$, viewed
as presheaves of $G$-sets, so that $\gamma_{3}$ is the sheafification
functor $a$. We have to construct a morphism of sheaves $bc':a(C)\rightarrow f_{\ast}a(C')$,
or equivalently a morphism of presheaves $C\rightarrow f_{\ast}a(C')$;
it is sufficient to construct a morphism of presheaves $C\rightarrow f_{\ast}C'$.
Now for any open $U$ of $S$ with preimage $U'=f^{-1}(U)$ in $S'$,
we have 
\begin{align*}
C(U) & =\underrightarrow{\lim}_{(X,x)\in\Fet_{U}(\overline{\eta})}B(X)\\
f_{\ast}C^{\prime}(U) & =\underrightarrow{\lim}_{(X',x')\in\Fet_{U'}(\overline{\eta})}B'(X')
\end{align*}
The unit $B\rightarrow f_{\ast}f^{\ast}B=f_{\ast}B'$ induces a morphism
\[
\xyR{0.1pc}\xyC{2pc}\xymatrix{\Fet_{U}(\overline{\eta})\ar[dddd]_{-\times_{U}U'}\ar[ddrr]\sp(0.5){B(-)}\\
 & \,\ar@/_{.5pc}/@{..>}[dd]\\
 &  & \Set\\
 & \,\\
\Fet_{U'}(\overline{\eta})\ar[uurr]\sb(0.5){B'(-)}
}
\]
whose colimit gives a $G$-equivariant map $C(U)\rightarrow f_{\ast}C'(U)$,
which yields the desired morphism $C\rightarrow f_{\ast}C'$. As a
consequence, we obtain a factorization
\[
\xyR{1.2pc}\xyC{1.6pc}\xymatrix{\Shv\left(S_{et}\right)\ar[dd]_{f^{\ast}}\ar[rr]\sp(0.5){\gamma\circ\beta} &  & \Shv_{G}^{\star}(S_{Zar})\ar[dd]^{f^{\ast}}\ar@{^{(}->}[rr]\sp(0.5){\mathrm{inc}} &  & \Shv_{G}(S_{Zar})\ar[dd]^{f^{\ast}}\\
 &  & \,\ar@{..>}@/_{1pc}/[dl]_{bc}\\
\Shv\left(S_{et}^{\prime}\right)\ar[rr]\sb(0.5){\gamma\circ\beta} & \vphantom{\,m} & \Shv_{G}^{\star}(S_{Zar}^{\prime})\ar@{^{(}->}[rr]\sp(0.5){\mathrm{inc}} &  & \Shv_{G}(S_{Zar}^{\prime})
}
\]
and we now have to show that the natural transformation of the first
square is an isomorphism $bc:f^{\ast}\circ(\gamma\circ\beta)\rightarrow(\gamma\circ\beta)\circ f^{\ast}$.
Since all functors in sight are left adjoints or equivalences, they
commute with all colimits. Since any etale sheaf $B$ on $S$ is a
colimit of representable sheaves, we may, by proposition~\ref{prop:IfLocHensEtcovbyFet},
restrict our attention to $B=\Hom_{S}(-,X)$ with $X$ in $S_{fet}$,
say with image $U$ in $S$. Then $f^{\ast}B=\Hom_{S'}(-,X')$ with
$X^{\prime}=X\times_{S}S'$ in $S_{fet}^{\prime}$ with image $U'=f^{-1}(U)$
in $S'$. Note that $X^{\prime}(\overline{\eta})\simeq X(\overline{\eta})$
as $G$-sets. By lemma~\ref{lem:DeltaOnFet}, we have
\[
\gamma\circ\beta(B)=j_{U!}(X(\overline{\eta})_{U})\quad\text{and}\quad\gamma\circ\beta(f^{\ast}B)=j_{U'!}(X^{\prime}(\overline{\eta})_{U'}).
\]
So we are reduced to showing that the following diagram is $2$-commutative:
\[
\xymatrix{ & \Shv_{G}(U_{Zar})\ar[dd]^{f^{\ast}}\ar[rr]\sp(0.5){j_{U!}} &  & \Shv_{G}(S_{Zar})\ar[dd]^{f^{\ast}}\\
\Set_{G}\ar[ur]\sp(0.45){(-)_{U}}\ar[dr]\sb(0.45){(-)_{U'}}\\
 & \Shv_{G}(U_{Zar}^{\prime})\ar[rr]\sp(0.5){j_{U'!}} &  & \Shv_{G}(S_{Zar}^{\prime})
}
\]
This follows from the obvious $2$-commutativity of the right adjoint
diagram:
\[
\xymatrix{ & \Shv_{G}(U_{Zar})\ar[dl]\sb(0.55){\Gamma(U,-)} &  & \Shv_{G}(S_{Zar})\ar[ll]\sb(0.5){j_{U}^{\ast}}\\
\Set_{G}\\
 & \Shv_{G}(U_{Zar}^{\prime})\ar[ul]\sp(0.55){\Gamma(U',-)}\ar[uu]_{f_{\ast}} &  & \Shv_{G}(S_{Zar}^{\prime})\ar[ll]\sb(0.5){j_{U'}^{\ast}}\ar[uu]_{f_{\ast}}
}
\]
This proves the proposition. 
\end{proof}
\begin{cor}
There is a right adjoint dual $2$-commutative diagram 
\[
\xyR{2pc}\xyC{3pc}\xymatrix{\AlgSp_{et}(S)\ar[r]^{\alpha} & \Shv(S_{et})\ar[r]\sp(0.45){\gamma\circ\beta} & \Shv_{G}^{\star}(S_{Zar})\\
\AlgSp_{et}(S')\ar[u]^{f_{\ast}^{et}}\ar[r]^{\alpha} & \Shv(S_{et}^{\prime})\ar[u]^{f_{\ast}}\ar[r]\sp(0.45){\gamma\circ\beta} & \Shv_{G}^{\star}(S_{Zar}^{\prime})\ar[u]^{f_{\ast}^{et}}
}
\]
\end{cor}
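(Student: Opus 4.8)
The plan is to obtain this right-adjoint diagram formally from Proposition~\ref{prop:BaseChange}, using that every horizontal functor in sight is an equivalence of categories. First I would observe that $\alpha:\AlgSp_{et}(S)\to\Shv(S_{et})$ is an equivalence by Proposition~\ref{prop:FromAlgSp2EtShv}, that $\beta:\Shv(S_{et})\to\Shv(S_{fet})$ is an equivalence by Corollary~\ref{cor:BetaEquiv} (here we use that $S$ is locally henselian), and that $\gamma=\gamma_3\circ\gamma_2\circ\gamma_1$ is an equivalence onto $\Shv_G^\star(S_{Zar})$, being a composite of the equivalences $\gamma_1$, $\gamma_2$ and of the equivalence $\gamma_3:\Shv_\pi(S_{Zar})\to\Shv_G^\star(S_{Zar})$; the same holds over $S'$ since $S'$ is again irreducible, locally henselian and geometrically unibranch. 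Hence $\alpha$ and $\gamma\circ\beta$ are equivalences, and $\delta=\gamma\circ\beta\circ\alpha$ is an equivalence.

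The left-hand square is then already in hand: it is exactly the left-hand square of the right-adjoint diagram in Proposition~\ref{prop:BaseChange4alpha}, so there is a canonical isomorphism $\alpha\circ f_\ast^{et}\simeq f_\ast\circ\alpha$, with $f_\ast^{et}:\AlgSp_{et}(S')\to\AlgSp_{et}(S)$ the right adjoint of $f^\ast$ described in Remark~\ref{rem:ZZZpushoutNOTequal}. It remains to treat the right-hand square and, along the way, to pin down the functor written $f_\ast^{et}$ on $\Shv_G^\star$.

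For the right-hand square the organizing principle is that conjugation by an equivalence preserves adjunctions: a quasi-inverse of an equivalence is simultaneously a left and a right adjoint to it, so conjugating an adjoint pair $(L,R)$ by equivalences on both sides again yields an adjoint pair, and it sends the right adjoint of $L$ to the right adjoint of the conjugate of $L$. Now Proposition~\ref{prop:BaseChange} provides a natural isomorphism $f^\ast\circ(\gamma\circ\beta)_S\simeq(\gamma\circ\beta)_{S'}\circ f^\ast$, i.e. it identifies $f^\ast:\Shv_G^\star(S_{Zar})\to\Shv_G^\star(S'_{Zar})$ with the conjugate of $f^\ast:\Shv(S_{et})\to\Shv(S'_{et})$ along the equivalences $(\gamma\circ\beta)_S$, $(\gamma\circ\beta)_{S'}$. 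Since the latter has right adjoint $f_\ast$, the former has a right adjoint, namely the conjugate $f_\ast^{et}:=(\gamma\circ\beta)_S\circ f_\ast\circ(\gamma\circ\beta)_{S'}^{-1}$; rearranging this formula gives precisely $f_\ast^{et}\circ(\gamma\circ\beta)_{S'}\simeq(\gamma\circ\beta)_S\circ f_\ast$, the $2$-commutativity of the right-hand square. Pasting the two squares yields the diagram of the corollary.

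I do not anticipate a real obstacle, only two bookkeeping points. One is to make sure that each horizontal functor is genuinely an equivalence, which legitimises all the conjugations above. The other is to check that the three functors labelled $f_\ast^{et}$, $f_\ast$, $f_\ast^{et}$ in the statement are mutually compatible; but this is automatic, because the diagram of left adjoints formed by the first three columns of Proposition~\ref{prop:BaseChange} already $2$-commutes and its horizontal arrows are equivalences, so the diagram of right adjoints obtained by conjugation $2$-commutes as a whole, with $\delta$ restricting on either end to the equivalence intertwining the two realisations of $f^\ast$ (hence of $f_\ast^{et}$).
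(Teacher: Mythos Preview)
Your proposal is correct and is precisely the intended argument: the corollary is stated without proof in the paper because it follows formally from Proposition~\ref{prop:BaseChange} by passing to right adjoints, using that the horizontal arrows $\alpha$ and $\gamma\circ\beta$ are equivalences (hence self-adjoint on both sides). Your identification of $f_\ast^{et}$ on $\Shv_G^\star$ as the conjugate of $f_\ast$ on $\Shv(S_{et})$ is also consistent with the paper's subsequent remark, which unwinds this to $f_\ast^{et}=(-)^{sm,\star}\circ f_\ast\circ\mathrm{inc}$ on $\Shv_G$.
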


\begin{rem}
As in remark~\ref{rem:ZZZpushoutNOTequal}, while $f^{\ast}$ on
$\Shv_{G}^{\star}$ is the restriction of the eponymous functor on
$\Shv_{G}$, their respective right adjoints $f_{\ast}^{et}$ and
$f_{\ast}$ are related by
\[
\xymatrix{\Shv_{G}^{\star}(S_{Zar}) & \Shv_{G}(S_{Zar})\ar[l]\sb(0.45){(-)^{sm,\star}}\\
\Shv_{G}^{\star}(S_{Zar}^{\prime})\ar[u]^{f_{\ast}^{et}}\ar@{^{(}->}[r]\sp(0.5){\mathrm{inc}} & \Shv_{G}(S_{Zar}^{\prime})\ar[u]_{f_{\ast}}
}
\]
When $f$ is quasi-compact, $f_{\ast}$ takes smooth sheaves to smooth
sheaves, and we may thus replace the right column by $f_{\ast}:\Shv_{G}^{sm}(S_{Zar}^{\prime})\rightarrow\Shv_{G}^{sm}(S_{Zar})$. 
\end{rem}

\begin{cor}
For $A\in\AlgSp_{et}(S)$ with pull-back $A_{red}\in\AlgSp_{et}(S_{red})$,
\[
A\text{ is representable}\iff A_{red}\text{ is representable}.
\]
Moreover, $A_{red}(X\times_{S}S_{red})=A(X)$ for any proetale morphism
$X\rightarrow S$. 
\end{cor}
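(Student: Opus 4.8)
The plan is to apply proposition~\ref{prop:BaseChange} to the reduction morphism $f\colon S_{red}\hookrightarrow S$, which is precisely case $(3)$ listed there. First I would check that $S_{red}$ meets the standing hypotheses: it is irreducible because $\left|S_{red}\right|=\left|S\right|$, it is geometrically unibranch because this property of a scheme depends only on its reduction, and it is locally henselian because a quotient of a henselian local ring is henselian. The morphism $f$ induces an isomorphism on residue fields at the generic point $\eta$ (the nilradical lies in every maximal ideal), so the profinite group $G=\pi_{1}(\eta,\overline{\eta})$ is the same whether formed over $S$ or over $S_{red}$. Since $f$ is moreover a homeomorphism on underlying spaces, I would identify $(S_{red})_{Zar}$ with $S_{Zar}$; topological invariance of the etale site then gives $\pi(S_{red}(s))=\pi(S(s))$ for every $s$, hence the inertia subgroups $I(s)\subset G$ agree, so that $\Shv_{G}^{\star}((S_{red})_{Zar})=\Shv_{G}^{\star}(S_{Zar})$ and likewise $\Shv_{G}^{et}((S_{red})_{Zar})=\Shv_{G}^{et}(S_{Zar})$, and under these identifications the pull-back $f^{\ast}\colon\Shv_{G}^{\star}(S_{Zar})\to\Shv_{G}^{\star}((S_{red})_{Zar})$ becomes the identity functor, since $f^{\ast}D(U)=D(f(U))=D(U)$ for every open $U$.

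Given all this, proposition~\ref{prop:BaseChange} yields $\delta(A_{red})=f^{\ast}\delta(A)=\delta(A)$ in $\Shv_{G}^{\star}(S_{Zar})$. By theorem~\ref{thm:Main} together with remark~\ref{rem:RepAlgSpIsRepEtale}, the space $A$ is representable (equivalently, a scheme; equivalently, represented by an object of $S_{et}$) if and only if $\delta(A)$ belongs to $\Shv_{G}^{et}$; applying the same criterion over $S_{red}$, the space $A_{red}$ is representable if and only if $\delta(A_{red})$ belongs to $\Shv_{G}^{et}((S_{red})_{Zar})=\Shv_{G}^{et}(S_{Zar})$. Since $\delta(A_{red})=\delta(A)$, the two conditions are literally the same, which proves the first assertion.

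For the displayed formula I would proceed by approximation. Localizing on source and target and using that $A$ and $A_{red}$ are sheaves, I may assume $X=\underleftarrow{\lim}_{i}X_{i}$ is a cofiltered limit of objects $X_{i}\in S_{et}$ with affine transition morphisms, whence $X\times_{S}S_{red}=\underleftarrow{\lim}_{i}(X_{i}\times_{S}S_{red})$ in $\Sch$. The algebraic space $A$ is locally of finite presentation over $S$ by \cite[\href{https://stacks.math.columbia.edu/tag/0468}{Tag 0468}]{SP}, and likewise $A_{red}$ over $S_{red}$, so \cite[\href{https://stacks.math.columbia.edu/tag/01ZC}{Tag 01ZC}]{SP} gives $A(X)=\underrightarrow{\lim}_{i}A(X_{i})$ and $A_{red}(X\times_{S}S_{red})=\underrightarrow{\lim}_{i}A_{red}(X_{i}\times_{S}S_{red})$. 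Now put $B=\alpha(A)\in\Shv(S_{et})$, so that $A(X_{i})=B(X_{i})$ for $X_{i}\in S_{et}$ while $\alpha(A_{red})=f^{\ast}B$ by the compatibility of $\alpha$ with pull-back in proposition~\ref{prop:BaseChange}. Since the continuous functor $X\mapsto X\times_{S}S_{red}$ is the equivalence $S_{et}\rightarrow(S_{red})_{et}$ of topological invariance and $f^{\ast}$ is restriction along a quasi-inverse, we obtain $A_{red}(X_{i}\times_{S}S_{red})=(f^{\ast}B)(X_{i}\times_{S}S_{red})=B(X_{i})=A(X_{i})$, and passing to the colimit over $i$ yields $A_{red}(X\times_{S}S_{red})=A(X)$.

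I expect the only genuinely delicate bookkeeping to be in the last paragraph — reducing a general pro-etale $X\to S$, by the sheaf property, to a situation where \cite[\href{https://stacks.math.columbia.edu/tag/01ZC}{Tag 01ZC}]{SP} directly applies — together with making the identifications of the first paragraph ($(S_{red})_{Zar}=S_{Zar}$, $\Shv_{G}^{\star}$ and $\Shv_{G}^{et}$ unchanged, $f^{\ast}$ trivial on them) fully precise. The conceptual heart of the proof, by contrast, is the single line $\delta(A_{red})=\delta(A)$ extracted from proposition~\ref{prop:BaseChange}.
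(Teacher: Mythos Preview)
Your proof is correct and follows essentially the same route as the paper: apply proposition~\ref{prop:BaseChange} to $f\colon S_{red}\hookrightarrow S$, observe that $f^{\ast}$ on $\Shv_{G}^{\star}$ is the identity (since $\left|S_{red}\right|=\left|S\right|$ and the inertia groups agree), and deduce the representability equivalence from theorem~\ref{thm:Main}; for the second assertion the paper likewise reduces the proetale case to the etale one via \cite[\href{https://stacks.math.columbia.edu/tag/0468}{Tag 0468}]{SP} and then invokes that $f^{\ast}$ is an equivalence on etale sheaves. Your write-up simply makes the paper's three-line argument explicit.
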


\begin{proof}
Apply the proposition to $f:S_{red}\rightarrow S$ and note that the
pull-back functor on Zariski sheaves $\Shv_{G}^{\star}(S_{Zar})\rightarrow\Shv_{G}^{\star}(S_{red,Zar})$
is the identity of $\Shv_{G}^{\star}(\left|S\right|)$. For the second
assertion, the proetale case follows from the etale case by \cite[\href{https://stacks.math.columbia.edu/tag/0468}{Tag 0468}]{SP},
which itself follows from the fact that $f^{\ast}$ is here an equivalence. 
\end{proof}

\subsection{Generic fiber}

Applying proposition~\ref{prop:BaseChange} to $\iota:\eta\rightarrow S$,
we obtain
\[
\xymatrix{\AlgSp_{et}(S)\ar[d]_{\iota^{\ast}}\ar[r]^{\alpha} & \Shv(S_{et})\ar[d]^{\iota^{\ast}}\ar[r]\sp(0.45){\gamma\circ\beta} & \Shv_{G}^{\star}(S_{Zar})\ar[d]^{\iota^{\ast}}\ar@(r,u)[dr]^{(-)_{\eta}}\\
\AlgSp_{et}(\eta)\ar[r]^{\alpha} & \Shv(\eta_{et})\ar[r]\sp(0.45){\gamma\circ\beta} & \Shv_{G}^{\star}(\eta_{Zar})\ar[r]\sp(0.55){\Gamma(\eta,-)} & \Set_{G}^{sm}
}
\]
where we have added the obviously $2$-commutative triangle at the
end, in which $\Gamma(\eta,-)$ is an equivalence. Passing to right
adjoints, we obtain 
\[
\xymatrix{\AlgSp_{et}(S)\ar[r]^{\alpha} & \Shv(S_{et})\ar[r]\sp(0.45){\gamma\circ\beta} & \Shv_{G}^{\star}(S_{Zar})\\
\AlgSp_{et}(\eta)\ar[u]^{\iota_{\ast}^{et}}\ar[r]^{\alpha} & \Shv(\eta_{et})\ar[u]^{\iota_{\ast}}\ar[r]\sp(0.45){\gamma\circ\beta} & \Shv_{G}^{\star}(\eta_{Zar})\ar[u]^{\iota_{\ast}^{et}}\ar[r]\sp(0.55){\Gamma(\eta,-)} & \Set_{G}^{sm}\ar@(u,r)[ul]^{(-)_{S}^{\star}}
}
\]
where the last adjunction is taken from (\ref{eq:Adj**S}), whose
counits are isomorphisms. It follows that our right adjoints are fully
faithful. The composite bottom equivalence is easily computed: it
takes $\mathcal{A}\in\AlgSp_{et}(\eta)$ to the $G$-set $\mathcal{A}(\overline{\eta})$.
We thus find that 
\[
\delta(\iota_{\ast}^{et}\mathcal{A})\simeq\mathcal{A}(\overline{\eta})_{S}^{\star}.
\]

\begin{prop}
The right adjoint $\iota_{\ast}^{et}$ factors through the strictly
full subcategory of algebraic spaces which are representable by separated
etale $S$-schemes.
\end{prop}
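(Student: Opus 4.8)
The plan is to combine theorem~\ref{thm:Main} with the identification $\delta(\iota_{\ast}^{et}\mathcal{A})\simeq\mathcal{A}(\overline{\eta})_{S}^{\star}$ recorded just above the statement. By theorem~\ref{thm:Main}, an object $A\in\AlgSp_{et}(S)$ is representable by a separated etale $S$-scheme if and only if $\delta(A)$ lies in $\Shv_{G}^{set}$. Hence the proposition reduces to the following: for every $\mathcal{A}\in\AlgSp_{et}(\eta)$, writing $X=\mathcal{A}(\overline{\eta})\in\Set_{G}^{sm}$ for the associated smooth $G$-set, the Zariski sheaf of $G$-sets $X_{S}^{\star}$ belongs to $\Shv_{G}^{set}$.

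First I would note that $X_{S}^{\star}=(X_{S})^{\star}$ lies in $\Shv_{G}^{\star}\subset\Shv_{G}^{sm}$, so that the equivalent characterizations of $\Shv_{G}^{set}$ in proposition~\ref{prop:CaractOfsetAndet}(4) are available for it. Since $X_{S}^{\star}$ is, by its very construction, in the essential image of $(-)_{S}^{\star}$, lemma~\ref{lem:EssImageOf*S} gives, for every nonempty open $U$ of $S$, that $X_{S}^{\star}(U)=X^{I(U)}$ and $(X_{S}^{\star})_{\eta}=X^{I(\eta)}=X$, and that the localization map $X_{S}^{\star}(U)\to(X_{S}^{\star})_{\eta}$ is the tautological inclusion $X^{I(U)}\hookrightarrow X$. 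This map is injective, which is precisely condition $(4,c)$ of proposition~\ref{prop:CaractOfsetAndet}; therefore $X_{S}^{\star}\in\Shv_{G}^{set}$, and $\iota_{\ast}^{et}\mathcal{A}$ is representable by a separated etale $S$-scheme.

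I do not anticipate any real difficulty: all the substance has been front-loaded into the computation of $\delta(\iota_{\ast}^{et}\mathcal{A})$ and into lemma~\ref{lem:EssImageOf*S}, and the only thing left to observe is the evident injectivity of $X^{I(U)}\hookrightarrow X$. One could equally verify condition $(4,h)$, namely that the unit $X_{S}^{\star}\to(X^{I(\eta)})_{S}=X_{S}$ is a monomorphism, which over a connected open --- hence over any nonempty open, as $S$ is irreducible --- is again that same inclusion. If one wants the representing scheme explicitly, corollary~\ref{cor:SetisDisjointUnionOfFet} identifies it as the disjoint union, over the $G$-orbits $G/H$ occurring in $X$, of the connected finite etale cover with fibre $G/H$ over $\overline{\eta}$ of the open subset $U_{H}=\{s\in S:I(s)\subset H\}$, which is open by proposition~\ref{prop:LinkI(s)andI(U)}(1) together with profiniteness of $G$.
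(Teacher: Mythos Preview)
Your proposal is correct and follows essentially the same approach as the paper: both identify $\delta(\iota_{\ast}^{et}\mathcal{A})$ with $\mathcal{A}(\overline{\eta})_{S}^{\star}$, observe that this sheaf lies in $\Shv_{G}^{set}$, and conclude by theorem~\ref{thm:Main}. The paper simply declares that $\mathcal{A}(\overline{\eta})_{S}^{\star}$ ``plainly'' belongs to $\Shv_{G}^{set}$, whereas you justify this via lemma~\ref{lem:EssImageOf*S} and criterion~$(4,c)$ of proposition~\ref{prop:CaractOfsetAndet}; the paper then spells out the explicit decomposition of the representing scheme, which you also sketch at the end.
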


\begin{proof}
For $\mathcal{A}\in\AlgSp_{et}(\eta)$, set $A=\iota_{\ast}^{et}\mathcal{A}$
and $D=\mathcal{A}(\overline{\eta})_{S}^{\star}$, so that $D\simeq\delta(A)$,
and $D$ plainly belongs to $\Shv_{G}^{set}(S)$: we conclude by theorem~\ref{thm:Main}.
Concretely:
\[
D=\coprod_{c\in\pi_{0}(D)}D(c)\quad\text{with}\quad D(c)\simeq j_{U(c)!}(\gamma(c)_{U(c)})
\]
where $(U(c),\gamma(c))$ is the unique element of $\mathscr{S}_{G}^{m}(D)$
above $c\in G\backslash\mathcal{A}(\overline{\eta})$. By lemma~\ref{lem:EssImageOf*S},
$U(c)$ is the largest open $U$ of $S$ such that $c$ is fixed pointwise
by $I(U)$, namely 
\[
U(c)=\{s\in S:c\subset\mathcal{A}(\overline{\eta})^{I(s)}\}.
\]
Also, $\gamma(c)=c$ in $D(U(c))=\mathcal{A}(\overline{\eta})^{I(U(c))}$.
Let $X(c)$ be a connected finite etale $U(c)$-scheme with $X(c)(\overline{\eta})\simeq c$
as $\pi(U(c))$-sets. Then $D(c)\simeq\delta(X(c))$, so 
\[
A\simeq\Hom_{S}(-,X)\quad\text{with }X=\coprod_{c\in\pi_{0}(A)}X(c).
\]
Since all $X(c)$'s are separated over $S$, so is $X$.
\end{proof}

\subsection{The functor $S[-]^{et}$}

The previous proposition tells us that there is a functor 
\[
S[-]^{et}:\AlgSp_{et}(\eta)\rightarrow S_{set}
\]
whose composition with the Yoneda embedding 
\[
S_{set}\hookrightarrow S_{et}\hookrightarrow\AlgSp_{et}(S)
\]
is right adjoint to the generic fiber functor: for $A\in\AlgSp_{et}(S)$
and $\mathcal{A}\in\AlgSp_{et}(\eta)$, 
\[
\Hom_{\AlgSp(\eta)}(A_{\eta},\mathcal{A})\simeq\Hom_{\AlgSp(S)}(A,S[\mathcal{A}]^{et})
\]
where $A_{\eta}=\iota^{\ast}A$ is the generic fiber of $A$. The
counit $S[\mathcal{A}]_{\eta}^{et}\rightarrow\mathcal{A}$ is an isomorphism,
so $S[-]^{et}$ is fully faithful, and $\delta$ maps the unit $A\rightarrow S[A_{\eta}]^{et}$
to the unit $D\rightarrow(D_{\eta})_{S}^{\star}$, where $D=\delta(A)$.
For an irreducible $\mathcal{A}$, $S[\mathcal{A}]^{et}$ belongs
to the subcategory $\Fet_{U}^{c}$ of $S_{set}$, where $U$ is the
largest open of $S$ such that $I(U)$ acts trivially on $\mathcal{A}(\overline{\eta})$. 
\begin{prop}
If $S$ is normal, then $S[\mathcal{A}]^{et}$ is the largest open
$S[\mathcal{A}]^{\circ}$ of the normalization $S[\mathcal{A}]$ of
$S$ along $\mathcal{A}\rightarrow\eta\rightarrow S$ which is etale
over $S$.
\end{prop}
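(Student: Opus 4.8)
The plan is to reduce to an irreducible $\mathcal{A}$ and then match the explicit description of $S[\mathcal{A}]^{et}$ recalled above with the appropriate open locus of the normalization. Write $\mathcal{A}=\coprod_{c}\mathcal{A}(c)$ over its connected components $c\in\pi_{0}(\mathcal{A})=G\backslash\mathcal{A}(\overline{\eta})$, so that $\mathcal{A}(c)=\Spec L(c)$ for a finite separable extension $L(c)/k(\eta)$. The formation of $S[-]^{et}$, of the normalization of $S$ along $\mathcal{A}\to\eta\to S$, and of the ``largest open etale over $S$'' are all compatible with disjoint unions, so it suffices to treat a single $\mathcal{A}=\Spec L$ with $L/k(\eta)$ finite separable. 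In that case $S[\mathcal{A}]^{et}=X$ where, as recalled above, $X\in\Fet_{U}^{c}$ is the connected finite etale $U$-scheme with $X(\overline{\eta})\simeq\mathcal{A}(\overline{\eta})$ as $\pi(U)$-sets, and $U=\{s\in S:\mathcal{A}(\overline{\eta})=\mathcal{A}(\overline{\eta})^{I(s)}\}$ is the largest open of $S$ on which $I(-)$ acts trivially on $\mathcal{A}(\overline{\eta})$.

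Next I would record the relevant properties of $\widetilde{S}:=S[\mathcal{A}]$, the normalization of $S$ in $L$: it is affine and integral over $S$, normal, irreducible with function field $L$, and $\widetilde{S}\to S$ is surjective, since an integral morphism is closed and its image, being closed and containing $\eta$, is all of $S$. By proposition~\ref{prop:IntegralOverLocHensIsHomeo}, $\widetilde{S}\to S$ is then a homeomorphism. Since ``etale at a point'' means ``etale on an open neighborhood'', the set $\{\widetilde{s}\in\widetilde{S}:\widetilde{S}\to S\text{ is etale at }\widetilde{s}\}$ is the union of all opens of $\widetilde{S}$ that are etale over $S$, i.e.\ it is exactly $\widetilde{S}^{\circ}:=S[\mathcal{A}]^{\circ}$.

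The heart of the matter is then the equality $X=\widetilde{S}^{\circ}$. For the inclusion $X\subseteq\widetilde{S}^{\circ}$: being finite etale over the normal scheme $U$, $X$ is normal, and being connected it is integral with function field $L$; hence, locally over affine opens of $U$, $\mathcal{O}_{X}$ is a finite $\mathcal{O}_{U}$-algebra, integral over $\mathcal{O}_{U}$ and integrally closed in $L$, so $X$ \emph{is} the normalization of $U$ in $L$. As normalization commutes with the open immersion $U\hookrightarrow S$, this gives $X=\widetilde{S}\times_{S}U$, an open subscheme of $\widetilde{S}$ which is finite etale over $U\subseteq S$, whence $X\subseteq\widetilde{S}^{\circ}$. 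For the reverse inclusion, let $\widetilde{s}\in\widetilde{S}^{\circ}$ lie over $s\in S$ and choose an open $V\ni\widetilde{s}$ with $V\to S$ etale. Since $\widetilde{S}\to S$ is a homeomorphism, $W:=\mathrm{image}(V)$ is open in $S$ and $V=\widetilde{S}\times_{S}W$, so $V\to W$ is integral and etale, hence finite etale and surjective; as a nonempty open of the irreducible scheme $\widetilde{S}$, $V$ is integral with function field $L$ and dominates $W$, so $V\in\Fet_{W}^{c}$ has generic fiber $\Spec L$ and $V(\overline{\eta})=\mathcal{A}(\overline{\eta})$. Now $I(W)$ acts trivially on $V(\overline{\eta})=\mathcal{A}(\overline{\eta})$, and $I(s)=\bigcap_{s\in U'}I(U')\subseteq I(W)$ by proposition~\ref{prop:LinkI(s)andI(U)}, so $I(s)$ acts trivially on $\mathcal{A}(\overline{\eta})$, i.e.\ $s\in U$; therefore $\widetilde{s}\in\widetilde{S}\times_{S}U=X$. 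Combining the two inclusions, $S[\mathcal{A}]^{et}=X=\widetilde{S}^{\circ}=S[\mathcal{A}]^{\circ}$, and reassembling over the components of a general $\mathcal{A}$ finishes the proof.

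The main obstacle is precisely the identification of the abstractly constructed cover $X\in\Fet_{U}^{c}$ with the restriction over $U$ of the normalization $\widetilde{S}$; it rests on the elementary but load-bearing fact that a normal scheme which is finite over a normal base and has the prescribed function field is the relative normalization. Everything else is bookkeeping with proposition~\ref{prop:IntegralOverLocHensIsHomeo} (used to turn ``etale at a point of $\widetilde{S}$'' into a finite etale neighborhood over an open of $S$) and with the description $I(s)=\bigcap_{s\in U}I(U)$ of the subgroups $I(s)$.
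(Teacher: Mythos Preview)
Your proof is correct, and for the inclusion $S[\mathcal{A}]^{et}\subseteq S[\mathcal{A}]^{\circ}$ it is essentially identical to the paper's: both reduce to connected components and identify $X$ with the normalization of $U$ in $L$ via ``finite \'etale over normal is normal, hence equals the relative normalization'', whence $X=\widetilde{S}\times_S U$ sits as an open of $\widetilde{S}$ \'etale over $S$.

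For the reverse inclusion the two arguments genuinely diverge. You argue pointwise and topologically: using that $\widetilde{S}\to S$ is a homeomorphism (from proposition~\ref{prop:IntegralOverLocHensIsHomeo}, i.e.\ the locally henselian hypothesis), any \'etale open $V$ of $\widetilde{S}$ is the full preimage of some open $W\subseteq S$, hence $V\to W$ is integral and \'etale, so finite \'etale, and then $I(W)$ acts trivially on $V(\overline{\eta})=\mathcal{A}(\overline{\eta})$, forcing $W\subseteq U$. The paper instead uses the universal property already established for $S[-]^{et}$: since $S[\mathcal{A}]^{\circ}$ is separated \'etale over $S$ with generic fibre $\mathcal{A}$, the adjunction produces a morphism $S[\mathcal{A}]^{\circ}\to S[\mathcal{A}]^{et}$ extending $\mathrm{Id}_{\mathcal{A}}$; composing with the open embedding gives an endomorphism of $S[\mathcal{A}]^{\circ}$ extending $\mathrm{Id}_{\mathcal{A}}$, which by separatedness must be the identity. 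The paper's route is slicker and more categorical, leaning only on the adjunction and rigidity of separated \'etale schemes; your route is more explicit and makes visible exactly where the locally henselian assumption enters (through the homeomorphism), at the cost of a slightly longer chain of verifications. Either is a perfectly good proof.
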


\begin{proof}
By~corollary~\ref{cor:SetisDisjointUnionOfFet}, we may write $S[\mathcal{A}]^{et}=\coprod X_{i}$
with irreducible $X_{i}$'s in $S_{fet}$. Then $\mathcal{A}=S[\mathcal{A}]_{\eta}^{et}=\coprod X_{i,\eta}$,
so $S[\mathcal{A}]=\coprod S[X_{i,\eta}]$ where $S[X_{i,\eta}]$
is the normalization of $S$ along $X_{i,\eta}\rightarrow\eta\rightarrow S$,
and $S[\mathcal{A}]^{\circ}=\coprod S[X_{i,\eta}]^{\circ}$ where
$S[X_{i,\eta}]^{\circ}$ is the largest open of $S[X_{i,\eta}]$ which
is etale over $S$. Let $U_{i}$ be the image of $X_{i}$ in $S$.
Since $X_{i}\rightarrow U_{i}$ is finite etale and $U_{i}$ is normal,
$X_{i}$ is normal and isomorphic to the normalisation $U_{i}[X_{i,\eta}]$
of $U_{i}$ along $X_{i,\eta}\rightarrow\eta\rightarrow U_{i}$. Since
$U_{i}[X_{i,\eta}]\simeq X_{i}$ is etale, the open $U_{i}[X_{i,\eta}]$
of $S[X_{i,\eta}]$ is contained in $S[X_{i,\eta}]^{\circ}$. We thus
obtain an open embedding 
\[
\xymatrix{S[\mathcal{A}]^{et}\simeq\coprod U_{i}[X_{i,\eta}]\ar@{^{(}->}[r] & \coprod S[X_{i,\eta}]^{\circ}=S[\mathcal{A}]^{\circ}}
\]
extending $\mathrm{Id}:\mathcal{A}\rightarrow\mathcal{A}$. Conversely
by the universal property of $S[\mathcal{A}]^{et}$, there is a unique
$S$-morphism $S[\mathcal{A}]^{\circ}\rightarrow S[\mathcal{A}]^{et}$
extending $\mathrm{Id}:\mathcal{A}\rightarrow\mathcal{A}$. Composing
it with our open embedding, we obtain an $S$-morphism $S[\mathcal{A}]^{\circ}\rightarrow S[\mathcal{A}]^{\circ}$
extending $\mathrm{Id}:\mathcal{A}\rightarrow\mathcal{A}$. Since
$S[\mathcal{A}]^{\circ}$ is separated etale over $S$, there is a
unique such morphism, namely the identity of $S[\mathcal{A}]^{\circ}$.
Our embedding is therefore surjective, and $S[\mathcal{A}]^{et}\simeq S[\mathcal{A}]^{\circ}$. 
\end{proof}

\subsection{The $(-)_{set}$ and $(-)_{et}$ functors}

For $A\in\AlgSp_{et}$, define
\[
A_{set}=\mathrm{Im}\left(A\rightarrow S[A_{\eta}]^{et}\right)\quad\text{and}\quad A_{et}=\mathrm{Im}\left(A\rightarrow S[A_{\eta}]^{et}\times A_{loc}\right).
\]
By the results of section~\ref{subsec:MonoEpiInAlgSpet}, 
\begin{itemize}
\item $A_{set}$ is open in $S[A_{\eta}]^{et}$ and thus belongs to $S_{set}$,
\item $A_{et}$ is open in $S[A_{\eta}]^{et}\times A_{loc}$ and thus belongs
to $S_{et}$,
\end{itemize}
and we have a diagram of surjective morphisms in $\AlgSp_{et}$
\[
\xyR{1ex}\xyC{2pc}\xymatrix{ &  & A_{set}\\
A\ar@{->>}[r] & A_{et}\ar@{->>}[ur]\ar@{->>}[dr]\\
 &  & A_{loc}
}
\]
which $\delta$ maps to the analogous diagram from section~\ref{subsec:Et/Set-sheaves},
\[
\xymatrix{ &  & D_{set}\\
D\ar@{->>}[r] & D_{et}\ar@{->>}[ur]\ar@{->>}[dr]\\
 &  & D_{G}
}
\]
This construction defines functors $A\mapsto A_{set}$ and $A\mapsto A_{et}$
which are left adjoint to the Yoneda embeddings $S_{set}\hookrightarrow\AlgSp_{et}$
and $S_{et}\hookrightarrow\AlgSp_{et}$, with units $A\twoheadrightarrow A_{set}$
and $A\twoheadrightarrow A_{et}$, the latter inducing an homeomorphism
on the underlying topological spaces. We have obtained $2$-commutative
diagrams of adjunctions
\[
\xyR{2pc}\xyC{3pc}\xymatrix{\AlgSp_{et}\ar[d]_{\delta}\ar@<2pt>[r]^{(-)_{et}} & S_{et}\ar[d]_{\delta}\ar@<2pt>[r]^{(-)_{set}}\ar@<2pt>@{^{(}->}[l]^{\mathrm{yon}} & S_{set}\ar[d]_{\delta}\ar@<2pt>[r]^{(-)_{\eta}}\ar@<2pt>@{^{(}->}[l]^{\mathrm{inc}} & \eta_{et}\ar[d]^{(-)(\overline{\eta})}\ar@<2pt>[l]^{S[-]^{et}}\\
\Shv_{G}^{\star}\ar@<2pt>[r]^{(-)_{et}} & \Shv_{G}^{et}\ar@<2pt>[r]^{(-)_{set}}\ar@<2pt>@{^{(}->}[l]^{\mathrm{inc}} & \Shv_{G}^{set}\ar@<2pt>[r]^{(-)_{\eta}}\ar@<2pt>@{^{(}->}[l]^{\mathrm{inc}} & \Set_{G}^{sm}\ar@<2pt>[l]^{(-)_{S}^{\star}}
}
\]
\[
\xyR{2pc}\xyC{3pc}\xymatrix{\AlgSp_{et}\ar[d]_{\delta}\ar@<2pt>[r]^{(-)_{et}} & S_{et}\ar[d]_{\delta}\ar@<2pt>[r]^{(-)_{loc}}\ar@<2pt>@{^{(}->}[l]^{\mathrm{yon}} & S_{loc}\ar[d]_{\delta}\ar@<2pt>@{^{(}->}[l]^{\mathrm{inc}}\ar[r]_{\sim}^{\left|-\right|} & \left|S\right|_{loc}\ar[d]^{\mathrm{sec}}\\
\Shv_{G}^{\star}\ar@<2pt>[r]^{(-)_{et}} & \Shv_{G}^{et}\ar@<2pt>[r]^{(-)_{G}}\ar@<2pt>@{^{(}->}[l]^{\mathrm{inc}} & \Shv\ar@<2pt>@{^{(}->}[l]^{\mathrm{inc}}\ar@{=}[r] & \Shv
}
\]

\subsection{Stalks\protect\label{subsec:Stalks}}

Since all morphisms in $\Fet_{S}$ are finite etale, we may define
\[
S\{\overline{\eta}\}=\underleftarrow{\lim}_{(X,x)\in\Fet_{S}(\overline{\eta})}X=\underleftarrow{\lim}_{(X,x)\in\Fet_{S}^{c}(\overline{\eta})}X=\underleftarrow{\lim}_{(X,x)\in\Fet_{S}^{g}(\overline{\eta})}X.
\]
Here $\Fet_{S}^{c}$ and $\Fet_{S}^{g}\subset\Fet_{S}^{c}$ are the
strictly full subcategories of $X$'s in $\Fet_{S}$ which are respectively
connected and Galois over $S$: the corresponding strictly full subcategories
$\Fet_{S}^{c}(\overline{\eta})$ and $\Fet_{S}^{g}(\overline{\eta})$
are initial in $\Fet_{S}(\overline{\eta})$. So $S\{\overline{\eta}\}$
is a connected proetale cover of $S$ with Galois group $\pi(S)=G/I(S)$.
By remark~\ref{rem:IrreducibilityUnibranchSImpliesEtalesLocIrred}
and \cite[8.2.9]{EGA4.3}, $S\{\overline{\eta}\}$ is irreducible;
by proposition~\ref{prop:IntegralOverLocHensIsHomeo}, $S\{\overline{\eta}\}\rightarrow S$
is an homeomorphism. Let $E(S)$ be the category of finite subextensions
$L$ of the Galois extension $k(\eta,S)=k(\eta,\overline{\eta})^{I(S)}$
of $k(\eta)$, where $k(\eta,\overline{\eta})$ is the separable closure
of $k(\eta)$ in $k(\overline{\eta})$. Mapping $(X,x)\in\Fet_{S}^{c}(\overline{\eta})$
to 
\[
L(X,x)=\text{image of }x^{\sharp}:\Gamma(X_{\eta},\mathcal{O}_{X_{\eta}})\rightarrow k(\overline{\eta})
\]
defines a $G$-equivariant equivalence of categories $\Fet_{S}^{c}(\overline{\eta})\rightarrow E(S)^{\mathrm{opp}}$.
An inverse functor takes $L\in E(S)$ to the finite etale $S$-scheme
$S[L]^{et}=S[\Spec(L)]^{et}$ equipped with the $\overline{\eta}$-valued
point given by $\overline{\eta}\rightarrow\Spec(L)\simeq S[L]_{\eta}^{et}\rightarrow S[L]^{et}$.
Thus also
\[
S\{\overline{\eta}\}=\underleftarrow{\lim}_{L\in E(S)}S[L]^{et}.
\]

For $s\in S$ and $U\ni s$ open in $S$, the base change functors
$\Fet_{S}\rightarrow\Fet_{U}\rightarrow\Fet_{S(s)}$ induce $G$-equivariant
proetale $S$-morphisms $S(s)\{\overline{\eta}\}\rightarrow U\{\overline{\eta}\}\rightarrow S\{\overline{\eta}\}$,
and 
\[
S(s)\{\overline{\eta}\}=\underleftarrow{\lim}_{s\in U\subset S}U\{\overline{\eta}\}=\underleftarrow{\lim}_{(X,x)\in S_{fet}(\overline{\eta},s)}X.
\]
Here $S(s)=\Spec(\mathcal{O}_{S,s})$ while $S_{fet}(\overline{\eta},s)=\cup_{s\in U}\Fet_{U}(\overline{\eta})$
is the category of all pairs $(X,x)$ where $X\in S_{fet}$ is such
that its image $U$ in $S$ contains $s$, and $x\in X(\overline{\eta})$.
For any such pair, we denote by $\tilde{x}:S(s)\{\overline{\eta}\}\rightarrow X$
the corresponding $S$-morphism. Since $S(s)\{\overline{\eta}\}\rightarrow S(s)$
is an homeomorphism, $S(s)\{\overline{\eta}\}$ is a local scheme.
Let $\tilde{s}\rightarrow S(s)\{\overline{\eta}\}$ be a geometric
point over the closed point $\overline{s}$ of $S(s)\{\overline{\eta}\}$.
This yields a geometric point of $S$ over $s$, and the corresponding
strict henselization of $\mathcal{O}_{S,s}$ is given by 
\[
\Spec(\mathcal{O}_{S,\tilde{s}}^{sh})=\underleftarrow{\lim}_{(Y,y)\in S_{et}(\tilde{s})}Y=\underleftarrow{\lim}_{(Y,y)\in S_{fet}(\tilde{s})}Y.
\]
Here $S_{et}(\tilde{s})$ is the category of pairs $(Y,y)$ with $Y\in S_{et}$
and $y\in Y(\tilde{s})$, and $S_{fet}(\tilde{s})$ is the strictly
full subcategory where $Y\in S_{fet}$, which is initial by proposition~\ref{prop:IfLocHensEtcovbyFet}.
Mapping $(X,x)$ to $(Y,y)$ where $Y=X\in S_{fet}$ and $y\in Y(\tilde{s})$
is the composition 
\[
\xymatrix{\tilde{s}\ar[r] & S(s)\{\overline{\eta}\}\ar[r]^{\tilde{x}} & X}
\]
defines an equivalence of categories $S_{fet}(\overline{\eta},s)\rightarrow S_{fet}(\tilde{s})$.
We thus obtain
\[
S(s)\{\overline{\eta}\}\simeq\Spec(\mathcal{O}_{S,\tilde{s}}^{sh}).
\]
A posteriori, we find that we can essentially take $\tilde{s}\rightarrow S(s)\{\overline{\eta}\}$
to be the closed immersion $\overline{s}\hookrightarrow S(s)\{\overline{\eta}\}$
of the closed point $\overline{s}$ of $S(s)\{\overline{\eta}\}$,
whose residue field $k(\overline{s})$ is indeed a separable closure
of $k(s)$. With these conventions: 
\begin{prop}
For any $s\in S$, there is a $2$-commutative diagram
\[
\xymatrix{\AlgSp_{et}(S)\ar[r]^{\alpha}\ar@(d,l)[dr]_{(-)(\overline{s})} & \Shv(S_{et})\ar[r]^{\gamma\circ\beta}\ar[d]^{(-)_{\overline{s}}} & \Shv_{G}^{\star}(S_{Zar})\ar@(d,r)[dl]^{(-)_{s}}\\
 & \Set_{G}^{sm}
}
\]
\end{prop}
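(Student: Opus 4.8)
The plan is to split the square into its two constituent triangles --- the left one built from $\alpha$ and the stalk functor $(-)_{\overline{s}}$ on $\Shv(S_{et})$, the right one from $\gamma\circ\beta$ and the stalk functor $(-)_{s}$ on $\Shv_{G}^{\star}(S_{Zar})$ --- establish each, and then compose. Here $\overline{s}$ is the geometric point fixed in section~\ref{subsec:Stalks}, the closed point of $S(s)\{\overline{\eta}\}\simeq\Spec(\mathcal{O}_{S,\overline{s}}^{sh})$, whose residue field $k(\overline{s})$ is a separable closure of $k(s)$. Since $S$ is locally henselian, $\mathcal{O}_{S,s}$ is henselian, so the special-fibre functor identifies $\pi(S(s))$ with $\Gal(k(\overline{s})/k(s))=\Aut(\overline{s}/s)$; throughout I would read the $G$-set structures in the statement as the ones pulled back along $G\twoheadrightarrow\pi(S(s))=\Aut(\overline{s}/s)$, and note that all three stalks in play are then smooth $G$-sets on which $I(s)$ acts trivially, so each functor indeed lands in $\Set_{G}^{sm}$.

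The left triangle, $(-)_{\overline{s}}\circ\alpha\simeq(-)(\overline{s})$, I would simply read off from Proposition~\ref{prop:CompStalks4Alpha}: for $A\in\AlgSp_{et}(S)$ with $B=\alpha(A)$, that proposition already provides functorial $\Aut(\overline{s}/s)$-equivariant isomorphisms $B_{\overline{s}}\simeq A(\mathcal{O}_{S,\overline{s}}^{sh})\simeq A(k(\overline{s}))\simeq A(\overline{s})$ (with $k(s,\overline{s})=k(\overline{s})$, since $k(\overline{s})$ is already separably closed), and $\Aut(\overline{s}/s)$-equivariance coincides with $G$-equivariance for the actions fixed above.

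For the right triangle, $(-)_{s}\circ(\gamma\circ\beta)\simeq(-)_{\overline{s}}$, I would fix $B\in\Shv(S_{et})$, set $C=\gamma_{2}\gamma_{1}\beta(B)$ (the associated presheaf of $G$-sets on $S_{Zar}$), so that $D:=\gamma\circ\beta(B)=\gamma_{3}(C)=a(C)$, and use that sheafification commutes with taking stalks to get $D_{s}\simeq C_{s}$ as $G$-sets. Unwinding the definitions of $\gamma_{1}$ and $\gamma_{2}$ then identifies this with the filtered colimit
\[
D_{s}\;\simeq\;\underrightarrow{\lim}_{s\in U}\;\underrightarrow{\lim}_{(X,x)\in\Fet_{U}(\overline{\eta})}B(X)\;=\;\underrightarrow{\lim}_{(X,x)\in S_{fet}(\overline{\eta},s)}B(X).
\]
I would then feed in the equivalence of categories $S_{fet}(\overline{\eta},s)\rightarrow S_{fet}(\tilde{s})$ constructed in section~\ref{subsec:Stalks} --- sending $(X,x)$ to $X$ equipped with the $\tilde{s}$-point obtained by composing $\tilde{s}\rightarrow S(s)\{\overline{\eta}\}$ with $\tilde{x}:S(s)\{\overline{\eta}\}\rightarrow X$ --- together with the initiality of $S_{fet}(\tilde{s})$ in $S_{et}(\tilde{s})$ (Proposition~\ref{prop:IfLocHensEtcovbyFet}), which rewrites the colimit as $\underrightarrow{\lim}_{(Y,y)\in S_{et}(\tilde{s})}B(Y)=B_{\tilde{s}}$; taking $\tilde{s}=\overline{s}$ gives $D_{s}\simeq B_{\overline{s}}$.

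The step that needs care --- and what I expect to be the main obstacle --- is checking that this last chain of isomorphisms respects the $G$-actions, not merely the underlying sets. The action on $C(U)=B_{U}(\overline{\eta})$ is induced by $G\twoheadrightarrow\pi(U)$ permuting the geometric points $x\in X(\overline{\eta})$, while the action on $B_{\overline{s}}$ is the Galois action of $\Aut(\overline{s}/s)$ pulled back to $G$ along $G\twoheadrightarrow\pi(S(s))=\Aut(\overline{s}/s)$. Matching these amounts to the compatibilities recorded in section~\ref{subsec:Stalks}: $\pi(S(s))=\underleftarrow{\lim}_{s\in U}\pi(U)$ acts $S(s)$-linearly on $S(s)\{\overline{\eta}\}\simeq\Spec(\mathcal{O}_{S,\overline{s}}^{sh})$ compatibly with its action on the various $X(\overline{\eta})$, and the functor $S_{fet}(\overline{\eta},s)\rightarrow S_{fet}(\tilde{s})$ above is $G$-equivariant. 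Granting this, all the isomorphisms above are manifestly natural in $A$, resp.\ $B$, so the two triangles $2$-commute; composing them yields the asserted $2$-commutative diagram.
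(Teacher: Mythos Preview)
Your proposal is correct and follows essentially the same route as the paper: invoke Proposition~\ref{prop:CompStalks4Alpha} for the left triangle together with the identification $\Gamma(s)=\pi(S(s))=G/I(s)$, then for the right triangle unwind $D_{s}=C_{s}$ and $B_{\overline{s}}$ as colimits over $S_{fet}(\overline{\eta},s)$ and $S_{fet}(\overline{s})$ respectively, and match them via the equivalence of indexing categories from section~\ref{subsec:Stalks}. Your explicit discussion of why the isomorphism is $G$-equivariant is a welcome elaboration of what the paper leaves implicit in the phrase ``yields the desired functorial isomorphism''.
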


\begin{proof}
Proposition~\ref{prop:CompStalks4Alpha} gives commutativity of the
first triangle, except that the target category was $\Set_{\Gamma(\overline{s})}$,
with $\Gamma(s)=\Gal(k(\overline{s})/k(s))$. Since $S(s)\{\overline{\eta}\}=\Spec(\mathcal{O}_{S,\overline{s}}^{sh})$,
\[
\Gamma(s)=\Aut(\mathcal{O}_{S,\overline{s}}^{sh}/\mathcal{O}_{S,s})=\Aut(S(s)\{\overline{\eta}\}/S(s))=\pi(S(s))=G/I(s).
\]
Since the third functor lands in the strictly full subcategory $\Set_{G}^{sm}$
of $\Set_{G}$, it remains to establish the $2$-commutativity of
the second triangle. Unwinding the definitions, we find that for $B\in\Shv(S_{et})$,
$C=\gamma_{2}\circ\gamma_{1}\circ\beta(B)$ and $D=\gamma_{3}(C)$,
\[
B_{\overline{s}}=\underrightarrow{\lim}_{(Y,y)\in S_{et}(\overline{s})}B(Y)=\underrightarrow{\lim}_{(Y,y)\in S_{fet}(\overline{s})}B(Y)
\]
\[
D_{s}=C_{s}=\underrightarrow{\lim}_{s\in U}C(U)=\underrightarrow{\lim}_{s\in U}\underrightarrow{\lim}_{(X,x)\in\Fet_{U}(\overline{\eta})}B(X)=\underrightarrow{\lim}_{(X,x)\in S_{fet}(\overline{\eta},s)}B(X)
\]
The equivalence $S_{fet}(\overline{\eta},s)\rightarrow S_{fet}(\overline{s})$
considered above between the indexing categories of these colimits
then yields the desired functorial isomorphism. 
\end{proof}
For a specialization $s^{\prime}\rightsquigarrow s$ in $S$, the
base change functor $\Fet_{S(s)}\rightarrow\Fet_{S(s')}$ induces
a $G$-equivariant $S$-morphism 
\[
\xymatrix{\Spec(\mathcal{O}_{S,\overline{s}'}^{sh})=S(s')\{\overline{\eta}\}\ar[r] & S(s)\{\overline{\eta}\}=\Spec(\mathcal{O}_{S,\overline{s}}^{sh}).}
\]
The localization morphism between Zariski stalk functors
\[
\loc:(-)_{s}\rightarrow(-)_{s'}
\]
corresponds to the localization morphism between etale stalk functors
\[
\loc:(-)_{\overline{s}}\rightarrow(-)_{\overline{s}'}
\]
induced by the functor $S_{et}(\overline{s})\rightarrow S_{et}(\overline{s}')$
mapping $(Y,y)$ to $(Y,y')$, with $y'$ given by
\[
\xyC{2pc}\xymatrix{\overline{s}^{\prime}\ar@{^{(}->}[r] & \Spec(\mathcal{O}_{S,\overline{s}'}^{sh})\ar[r] & \Spec(\mathcal{O}_{S,\overline{s}}^{sh})\ar[r]\sp(0.65){\tilde{y}} & Y}
\]
where $\tilde{y}$ is the canonical map $\Spec(\mathcal{O}_{S,\overline{s}}^{sh})=\underleftarrow{\lim}_{(Y,y)\in S_{et}(\overline{s})}Y\rightarrow Y$.
On $\AlgSp_{et}(S)$, it corresponds to the localization morphism
between geometric section functors 
\[
\loc:(-)(\overline{s})\rightarrow(-)(\overline{s}')
\]
whose evaluation at $A\in\AlgSp_{et}(S)$ is given by 
\[
\xymatrix{A(\overline{s}) & A\left(\mathcal{O}_{S,\overline{s}}^{sh}\right)\ar[l]\sb(0.56){\simeq}\ar[r] & A\left(\mathcal{O}_{S,\overline{s}'}^{sh}\right)\ar[r]\sp(0.56){\simeq} & A(\overline{s}')}
\]
The morphism $\Spec(\mathcal{O}_{S,\overline{s}'}^{sh})\rightarrow\Spec(\mathcal{O}_{S,\overline{s}}^{sh})$
is a proetale cover of its image, which is the inverse image of $S(s')$
in $\Spec(\mathcal{O}_{S,\overline{s}}^{sh})$. It follows that the
middle map factors as 
\[
\xymatrix{A\left(\mathcal{O}_{S,\overline{s}}^{sh}\right)\ar[r] & A\left(\mathcal{O}_{S,\overline{s}}^{sh}\otimes_{\mathcal{O}_{S,s}}\mathcal{O}_{S,s'}\right)\ar@{^{(}->}[r] & A\left(\mathcal{O}_{S,\overline{s'}}^{sh}\right).}
\]
In particular by proposition~\ref{prop:CaractOfsetAndet}, $A$ is
representable if and only if for all $s\in S$, $A\left(\mathcal{O}_{S,\overline{s}}^{sh}\right)\rightarrow A\left(\mathcal{O}_{S,\overline{s}}^{sh}\otimes_{\mathcal{O}_{S,s}}\mathcal{O}_{S,\eta}\right)$
is injective on $G$-orbits. 

\section{Henselian valuation rings}

We now apply the above results to the case where $S$ is the spectrum
of a valuation ring $\mathcal{O}$ with fraction field $K$, maximal
ideal $m$, residue field $k(m)=\mathcal{O}/m$, and value group $\Gamma=K^{\times}/\mathcal{O}^{\times}$,
whose group structure will be denoted additively. So if $v:K^{\times}\twoheadrightarrow\Gamma$
is the quotient map, $v(xy)=v(x)+v(y)$ and the formula 
\[
v(x)\geq v(y)\iff x\in\mathcal{O}y
\]
 turns $\Gamma$ into a totally ordered commutative group. We extend
$v$ to $K\rightarrow\Gamma\cup\{\infty\}$ by $v(0)=\infty$, so
that $\mathcal{O}=\{x\in K:v(x)\geq0\}$ and $m=\{x\in K:v(x)>0\}$. 

The set of $\mathcal{O}$-submodules of $K$ is totally ordered by
inclusion: if $I_{1}$ and $I_{2}$ are $\mathcal{O}$-submodules
of $K$ such that $I_{1}\not\subset I_{2}$, then for any $x_{1}\in I_{1}\setminus I_{2}$
and $x_{2}\in I_{2}$, $x_{1}\notin\mathcal{O}x_{2}$, so $v(x_{1})<v(x_{2})$,
hence $x_{2}\in mx_{1}\subset\mathcal{O}x_{1}\subset I_{1}$, whence
$I_{2}\subset I_{1}$. 

In particular, $S=\Spec(\mathcal{O})$ is totally ordered by inclusion.
This totally ordered set is not entirely random: it has a smallest
element $0$, a largest element $m$, and any subset $\mathcal{S}\neq\emptyset$
of $S$ has an inf and a sup in $S$, respectively given by 
\[
\mathrm{inf}(\mathcal{S})=\cap_{q\in\mathcal{S}}q\qquad\text{and}\qquad\sup(S)=\cup_{q\in\mathcal{S}}q.
\]
Thus all nonempty closed subsets of $S$ are irreducible, of the form
$V(p)=[p,m]$ for a unique $p\in S$, using standard notations for
intervals in posets. Accordingly, any open $U\neq S$ of $S$ is of
the form $[0,p[$ for a unique $p\in S$. For $p\subset q$ in $S$,
$p$ is a prime ideal of $\mathcal{O}_{q}$ and $\mathcal{O}(p,q)=\mathcal{O}_{q}/p$
is a valuation ring with spectrum $[p,q]$, fraction field $k(p)=\mathcal{O}_{p}/p$,
residue field $k(q)=\mathcal{O}_{q}/q$, and value group $\Gamma(p)/\Gamma(q)$,
where $\Gamma(p)=v(\mathcal{O}_{p}^{\times})$ and $\Gamma(q)=v(\mathcal{O}_{q}^{\times})$
are convex subgroups of $\Gamma$. If $\mathcal{O}$ is henselian,
then so are all $\mathcal{O}(p,q)$'s; in particular, $\mathcal{O}$
and all $\mathcal{O}(p,q)$'s are locally henselian. 
\begin{prop}
For an open $U\neq\emptyset$ of $S$, the following conditions are
equivalent:
\begin{enumerate}
\item $U$ is a local scheme.
\item $U$ is affine.
\item $U$ is quasi-compact.
\item $U$ is special, i.e.~$U=D(f)$ for some nonzero $f\in\mathcal{O}$.
\item $U=[0,p]$ for some $p\in\Spec(\mathcal{O})$.
\end{enumerate}
\end{prop}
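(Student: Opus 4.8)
The plan is to use that $S=\Spec(\mathcal{O})$ is a chain (totally ordered by inclusion) in which every nonempty subset has a supremum, that its proper opens are exactly the initial segments $[0,p[$, and that its nonempty closed sets are exactly the final segments $V(p)=[p,m]$. One proves the five conditions equivalent by running the loop $(4)\Rightarrow(2)\Rightarrow(3)\Rightarrow(5)\Rightarrow(1)\Rightarrow(2)$ together with $(5)\Rightarrow(4)$, so that everything closes up. The single computation powering all of this is a description of the principal opens.

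Concretely, the key observation is that for $0\neq f\in\mathcal{O}$,
\[
D(f)=[0,p_f]\quad\text{with}\quad p_f:=\sup D(f)=\bigcup_{f\notin q}q .
\]
Indeed $f\notin p_f$ (otherwise $f$ would lie in some prime avoiding it), so $p_f$ is the largest prime not containing $f$, and if $q\subseteq p_f$ then $f\notin q$ as well, which gives the displayed equality. In particular every $D(f)$ is a closed‑below interval, whence $(4)\Rightarrow(5)$. For $(5)\Rightarrow(4)$: if $U=[0,p]$ is open and $p=m$ then $U=S=D(1)$; otherwise the complement $]p,m]$ is a nonempty closed set, hence $V(p')=[p',m]$ for the prime $p'=\min ]p,m]$, and for any $f\in p'\setminus p$ (a nonempty set not containing $0$, so $f\neq 0$) we have $f\notin p$ while $f\in p'\subseteq q$ for every $q\supsetneq p$; thus $D(f)=[0,p]=U$.

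The remaining links are routine. $(4)\Rightarrow(2)$: a principal open $D(f)$ is the affine scheme $\Spec\mathcal{O}[1/f]$. $(2)\Rightarrow(3)$: affine schemes are quasi‑compact. $(3)\Rightarrow(5)$: cover the quasi‑compact $U$ by the principal opens $D(f)\subseteq U$ (these form a basis), pass to a finite subcover $U=D(f_1)\cup\cdots\cup D(f_n)$, discard those $f_i$ equal to $0$ (at least one survives since $U\neq\emptyset$), and conclude by the key observation that $U=[0,p_{f_1}]\cup\cdots\cup[0,p_{f_n}]=[0,\max_i p_{f_i}]$, the last step because these intervals are nested in the chain $S$. $(5)\Rightarrow(1)$: by $(5)\Rightarrow(4)$ we may write $U=\Spec\mathcal{O}[1/f]$, and the primes of $\mathcal{O}[1/f]$ are $\{q\in\Spec\mathcal{O}:f\notin q\}=[0,p]$, a chain with greatest element $p$; hence $\mathcal{O}[1/f]$ has a unique maximal ideal and $U$ is a local scheme (one could equally identify it with $\Spec\mathcal{O}_p$ via the discussion preceding the proposition). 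Finally $(1)\Rightarrow(2)$ holds because a local scheme is, by definition, the spectrum of a ring, hence affine.

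The point that needs care is the opposite case $U=[0,p[$ when $p$ has no immediate predecessor, i.e.\ when $U$ has no maximum: then $U$ is genuinely not of the form $[0,q]$, and the content of the proposition is precisely that such a $U$ fails to be quasi‑compact, affine, or a local scheme. This is exactly what $(3)\Rightarrow(5)$ rules out, so that implication is the heart of the matter. (Alternatively one sees $(1)\Rightarrow(5)$ directly: a local scheme has a unique closed point, whereas the subspace $[0,p[$ of $S$ has a closed point only if $p$ has an immediate predecessor $q_0$, in which case $[0,p[=[0,q_0]$.) One should also keep in mind the trivial case $U=S=[0,m]=D(1)=\Spec\mathcal{O}$, which satisfies all five conditions at once.
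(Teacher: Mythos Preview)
Your proof is correct and follows essentially the same approach as the paper's: both arguments hinge on the observation that $D(f)=[0,p_f]$ with $p_f=\bigcup_{q\in D(f)}q$, and both reduce the quasi-compact case to a finite union of such intervals. The only cosmetic differences are that the paper runs the straight cycle $(1)\Rightarrow(2)\Rightarrow(3)\Rightarrow(4)\Rightarrow(5)\Rightarrow(1)$, uses the valuation directly to pick $f$ with $v(f)=\min_i v(f_i)$ in $(3)\Rightarrow(4)$ (equivalent to your $\max_i p_{f_i}$), and identifies $[0,p]$ with $\Spec(\mathcal{O}_p)$ outright for $(5)\Rightarrow(1)$ rather than passing through $\mathcal{O}[1/f]$.
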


\begin{proof}
Plainly $(1)\Rightarrow(2)\Rightarrow(3)$ and $(5)\Rightarrow(1)$
with $U=\Spec(\mathcal{O}_{p})$. For $(3)\Rightarrow(4)$: If $U$
is quasi-compact, it is covered by finitely many special opens $D(f_{i})=\Spec(\mathcal{O}_{f_{i}})$
for nonzero $f_{i}$'s in $\mathcal{O}$, and so $U=D(f)$ for any
$f\in\{f_{i}\}$ with $v(f)=\min\{v(f_{i})\}$. For $(4)\Rightarrow(5)$:
If $U=D(f)$, then $p=\cup_{q\in U}q$ belongs to $D(f)=U$, so $U=[0,p]$.
\end{proof}
\begin{defn}
A prime $p$ of $\mathcal{O}$ is \emph{special }if $[0,p]$ is open
in $\Spec(\mathcal{O})$.
\end{defn}

The map $p\mapsto[0,p]$ is an increasing bijection from special primes
of $\mathcal{O}$ to special opens of $\Spec(\mathcal{O})$. A prime
$p\neq m$ is special if and only if $\{r:p\subsetneq r\}$ has a
minimal element $q$; then $[0,p]=[0,q[=D(f)$ for any $f\in q\setminus p$
and the valuation ring $\mathcal{O}(p,q)$ has height $1$. The constructible
partitions of $\Spec(\mathcal{O})$ are given by 
\[
\Spec(\mathcal{O})=[0,p_{1}]\cup]p_{1},p_{2}]\cup\cdots\cup]p_{n-1},p_{n}]
\]
for finite sequences $p_{1}\subsetneq p_{2}\subsetneq\cdots\subsetneq p_{n}=m$
of special primes of $\mathcal{O}$.

Given the simple structure of $S$, a Zariski sheaf on $S$ is uniquely
characterized by its restriction to nonempty special opens, its sections
on such opens match the stalk at the corresponding special primes,
and the restriction maps between these spaces of sections match the
localization maps associated to the corresponding specializations
among special points. In other words, the category of Zariski sheaves
of sets $D$ on $S$ is equivalent to the category of functors $\mathscr{D}:\Sp(\mathcal{O})^{\circ}\rightarrow\Set$,
where $\Sp(\mathcal{O})$ is the totally ordered set of special points
in $\Spec(\mathcal{O})$, viewed as a category:
\[
\mathscr{D}(p)=D([0,p])=D_{p}\qquad\text{and}\qquad D(U)=\underleftarrow{\lim}_{p\in\Sp(\mathcal{O})\cap U}\mathscr{D}(p).
\]
In particular for all $q\in\Spec(\mathcal{O})$, 
\[
D([0,q[)=\underleftarrow{\lim}_{p\in\Sp(\mathcal{O}),p<q}\mathscr{D}(p)\quad\text{and}\quad D_{q}=\underrightarrow{\lim}_{p\in\Sp(\mathcal{O}),p\geq q}\mathscr{D}(p).
\]
Similar considerations apply to sheaves of $G$-sets. 

Suppose now that $\mathcal{O}$ is henselian and fix a geometric point
$\overline{\eta}\rightarrow S$ over the generic point $\eta$ of
$S$. Let $K^{sep}=k(\eta,\overline{\eta})$ be the separable closure
of $k(\eta)=K$ in $k(\overline{\eta})$ and set $G=\Gal(K^{sep}/K)$.
For $s\in U\subset S$, let $K(U)\subset K(s)\subset K^{sep}$ be
the fixed fields of $I(U)\supset I(s)$, and let $E(U)\subset E(s)$
be the finite extensions of $K$ in $K(U)$ and $K(s)$. So $E(\eta)$
is the set of all finite extensions of $K$ in $K^{sep}$, and 
\[
\begin{array}{rclrccl}
K(s) & = & \cup_{s\in U}K(U), &  & K(U) & = & \cap_{s\in U}K(s),\\
E(s) & = & \cup_{s\in U}E(U), &  & E(U) & = & \cap_{s\in U}E(s),
\end{array}
\]
by proposition~\ref{prop:LinkI(s)andI(U)}. For any integral $K$-algebra
$L$, let $S[L]$ be the normalization of $S$ in $\Spec(L)\hookrightarrow S$,
i.e.~$S[L]=\Spec(\mathcal{O}[L])$ where $\mathcal{O}[L]$ is the
integral closure of $\mathcal{O}$ in $L$. If $L$ is field, then
$S[L]\rightarrow S$ is an homeomorphism by proposition~\ref{prop:IntegralOverLocHensIsHomeo},
and for $s\in S$, we denote by $s_{L}\in S[L]$ the unique point
above $s$. For $L\in E(\eta)$, $S[L]\rightarrow S$ is etale at
$s_{L}$ if and only if $L\in E(s)$, and $S[L]^{et}=U[L]$ where
$U$ is the largest open of $S$ such that $L\in E(U)$. In particular
for a special $s\in\Sp(\mathcal{O})$, 
\[
I(s)=I([0,s]),\qquad K(s)=K([0,s])\quad\text{and}\quad E(s)=E([0,s]).
\]
With notations as in section~\ref{subsec:Stalks}, we have 
\[
S\{\overline{\eta}\}=\underleftarrow{\lim}_{L\in E(S)}S[L]=S[K(S)]
\]
\[
S(s)\{\overline{\eta}\}=\underleftarrow{\lim}_{s\in U}U\{\overline{\eta}\}=\underleftarrow{\lim}_{L\in E(s)}S(s)[L]=S(s)[K(s)]=\Spec(\mathcal{O}_{S,\overline{s}}^{sh})
\]
where $\overline{s}=s_{K(s)}$ is the closed point of $S(s)\{\overline{\eta}\}$. 

\subsection*{Summary}

The category of etale algebraic spaces $A$ over $S$ and the category
of etale sheaves $B$ on $S$ are equivalent to the category of presheaves
of smooth $G$-sets $\mathscr{D}$ on $\Sp(\mathcal{O})$ such that
for all special prime $s$ of $\mathcal{O}$, $I(s)$ acts trivially
on $\mathscr{D}(s)$, with
\[
\mathscr{D}(s)=A(\overline{s})=A(\mathcal{O}_{S,\overline{s}}^{sh})=B_{\overline{s}}=\underrightarrow{\lim}_{L\in E(s)}B(S[L]^{et})=\underrightarrow{\lim}_{L\in E(s)}B(S(s)[L]).
\]
The representable (resp. representable and separated) objects correspond
to those $\mathscr{D}'s$ such that for every $s'\subset s$ in $\Sp(\mathcal{O})$,
the localization map $\mathscr{D}(s)\rightarrow\mathscr{D}(s')$ is
injective on $G$-orbits (resp.~injective). Under these equivalences, 
\begin{itemize}
\item $A\mapsto A_{\eta}$ corresponds to $\mathscr{D}\mapsto\mathscr{D}_{\eta}=\underrightarrow{\lim}_{s\in\Sp(\mathcal{O})}\mathscr{D}(s)$,
\item $A\mapsto A_{set}$ to $\mathscr{D}\mapsto\mathscr{D}_{set},$ with
$\mathscr{D}_{set}(s)=\mathrm{Im}(\mathscr{D}(s)\rightarrow\mathscr{D}_{\eta})$,
\item $A\mapsto A_{loc}$ to $\mathscr{D}\mapsto\mathscr{D}_{loc},$ with
$\mathscr{D}_{loc}(s)=G\backslash\mathscr{D}(s)$,
\item $A\mapsto A_{et}$ to $\mathscr{D}\mapsto\mathscr{D}_{et},$ with
$\mathscr{D}_{et}(s)=\mathrm{Im}(\mathscr{D}(s)\rightarrow\mathscr{D}_{\eta}\times G\backslash\mathscr{D}(s))$.
\end{itemize}
\bibliographystyle{plain}
\bibliography{MyBib}

\begin{thebibliography}{1}

\bibitem{SGA4.1}
{\em Th\'eorie des topos et cohomologie \'etale des sch\'emas. {T}ome 1:
  {T}h\'eorie des topos}, volume Vol. 269 of {\em Lecture Notes in
  Mathematics}.
\newblock Springer-Verlag, Berlin-New York, 1972.
\newblock S\'eminaire de G\'eom\'etrie Alg\'ebrique du Bois-Marie 1963--1964
  (SGA 4), Dirig\'e{} par M. Artin, A. Grothendieck, et J. L. Verdier. Avec la
  collaboration de N. Bourbaki, P. Deligne et B. Saint-Donat.

\bibitem{EGA4.1}
A.~Grothendieck.
\newblock {{\'E}l{\'e}ments de g{\'e}om{\'e}trie alg{\'e}brique. {IV}.
  {\'E}tude locale des sch{\'e}mas et des morphismes de sch{\'e}mas. {I}}.
\newblock {\em Inst. Hautes {\'E}tudes Sci. Publ. Math.}, (20):259, 1964.

\bibitem{EGA4.3}
A.~Grothendieck.
\newblock {{\'E}l{\'e}ments de g{\'e}om{\'e}trie alg{\'e}brique. {IV}.
  {\'E}tude locale des sch{\'e}mas et des morphismes de sch{\'e}mas. {III}}.
\newblock {\em Inst. Hautes {\'E}tudes Sci. Publ. Math.}, (28):255, 1966.

\bibitem{EGA4.4}
A.~Grothendieck.
\newblock {{\'E}l{\'e}ments de g{\'e}om{\'e}trie alg{\'e}brique. {IV}.
  {\'E}tude locale des sch{\'e}mas et des morphismes de sch{\'e}mas {IV}}.
\newblock {\em Inst. Hautes {\'E}tudes Sci. Publ. Math.}, (32):361, 1967.

\bibitem{MoBa23}
Laurent Moret-Bailly.
\newblock A henselian preparation theorem.
\newblock {\em Israel J. Math.}, 257(2):519--531, 2023.

\bibitem{SP}
The {Stacks project authors}.
\newblock The stacks project, 2022.

\end{thebibliography}

\end{document}